\newtheorem{theo}{Theorem}[section]
\newtheorem{prop}[theo]{Proposition}
\newtheorem{lemma}[theo]{Lemma}
\newtheorem{coro}[theo]{Corollary}
\newtheorem{claim}[theo]{Claim}
\newtheorem{construct}[theo]{Construction}
\newcommand{\sm}{\setminus}
\newcommand{\eps}{\varepsilon}
\newcommand{\Z}{{\mathbb Z}}
\newcommand{\Part}{{\mathcal P}}
\newcommand{\Qart}{{\mathcal Q}}
\newcommand{\ab}{{\bf a}}
\newcommand{\ib}{{\bf i}} 
\newcommand{\vb}{{\bf v}}
\newcommand{\ub}{{\bf u}}
\newcommand{\CONFIG}{\mathcal{C}}
\newcommand{\mc}[1]{\mathcal{#1}} 
\newcommand{\nib}[1]{\noindent {\bf #1}}
\newcommand{\bsize}[1]{\left| #1 \right|}
\newcommand{\bfl}[1]{\left\lfloor #1 \right\rfloor}
\newcommand{\bcl}[1]{\left\lceil #1 \right\rceil}
\newcommand{\bgen}[1]{\left\langle #1 \right\rangle}
\newcommand{\sub}{\subseteq}
\newcommand{\es}{\emptyset}
\newcommand{\unit}{{\bf u}}
\def\noproof{{\unskip\nobreak\hfill\penalty50\hskip2em\hbox{}\nobreak\hfill%
       $\square$\parfillskip=0pt\finalhyphendemerits=0\par}\goodbreak}
\def\endproof{\noproof\bigskip}
\def\proof{\removelastskip\penalty55\medskip\noindent{\bf Proof. }}
\def\COMMENT#1{}
\let\COMMENT=\footnote
\newdimen\margin   
\def\textno#1&#2\par{%
   \margin=\hsize
   \advance\margin by -4\parindent
          \setbox1=\hbox{\sl#1}%
   \ifdim\wd1 < \margin
      $$\box1\eqno#2$$%
   \else
      \bigbreak
      \hbox to \hsize{\indent$\vcenter{\advance\hsize by -3\parindent
      \sl\noindent#1}\hfil#2$}%
      \bigbreak
   \fi}
\title{A multipartite Hajnal-Szemer\'edi theorem}
\author{Peter Keevash and Richard Mycroft}
\thanks{Research supported in part by ERC grant 239696 and EPSRC grant EP/G056730/1.}
\date{\today}
\begin{document}

\vspace*{-0.8cm}
\begin{abstract}
The celebrated Hajnal-Szemer\'edi theorem gives the precise minimum degree threshold that forces a graph to contain a perfect $K_k$-packing. Fischer's conjecture states that the analogous result holds for all multipartite graphs except for those formed by a single construction. Recently, we deduced an approximate version of this conjecture from new results on perfect matchings in hypergraphs. In this paper, we apply a stability analysis to the extremal cases of this argument, thus showing that the exact conjecture holds for any sufficiently large graph.
\end{abstract}
\maketitle
\vspace*{-0.6cm}

\section{Introduction} \label{sec:intro}
A fundamental result of Extremal Graph Theory is the Hajnal-Szemer\'edi theorem, which states that if $k$ divides $n$ then any graph $G$ on~$n$ vertices with minimum degree $\delta(G) \geq (k-1)n/k$ contains a perfect $K_k$-packing, i.e.~a spanning collection of vertex-disjoint $k$-cliques. This paper considers a conjecture of Fischer~\cite{F} on a multipartite analogue 
of this theorem. Suppose $V_1, \dots, V_k$ are disjoint sets of $n$ vertices each, and $G$ is a $k$-partite graph on vertex classes $V_1, \dots, V_k$ (that is, $G$ is a graph on the vertex set $V_1 \cup \dots \cup V_k$ such that no edge of $G$ has both endvertices in the same $V_j$). We define the \emph{partite minimum degree} of $G$, denoted $\delta^*(G)$, to be the largest $m$
such that every vertex has at least $m$ neighbours in each part other than its own, i.e.\
$$\delta^*(G) := \min_{i \in [k]} \min_{v \in V_i} \min_{j \in [k]\sm\{i\}} |N(v) \cap V_j|,$$
where $N(v)$ denotes the neighbourhood of $v$. Fischer conjectured that if $\delta^*(G) \ge (k-1)n/k$ then $G$ has a perfect $K_k$-packing. This conjecture is straightforward for $k=2$, as it is not hard to see that any maximal matching must be perfect in this case, but for odd $k \geq 3$ the conjecture does not hold, as can be seen from constructions provided by Catlin~\cite{C} (these counterexamples are presented in Construction~\ref{fischereg} as $\Gamma_{n, k, k}$). For $k=3$, Magyar and Martin~\cite{MM} proved that, for large $n$, Catlin's construction is in fact the only counterexample to this conjecture. More precisely, they showed that if $n$ is sufficiently large, $G$ is a $3$-partite graph with vertex classes each of size~$n$ and $\delta^*(G) \geq 2n/3$, then either~$G$ contains a perfect $K_3$-packing, or $n$ is odd and divisible by $3$, and $G$ is isomorphic to the graph $\Gamma_{n, 3, 3}$ defined in Construction~\ref{fischereg}.

The implicit conjecture behind this result (stated explicitly by K\"uhn and Osthus~\cite{KO}) is that the only counterexamples to Fischer's original conjecture are those constructed by Catlin, that is, the graphs $\Gamma_{n, k, k}$ defined in Construction~\ref{fischereg} when $n$ is odd and divisible by~$k$. We refer to this as the modified Fischer conjecture. If $k$ is even then $n$ cannot be both odd and divisible by~$k$, so the modified Fischer conjecture is the same as the original conjecture in this case. Martin and Szemer\'edi~\cite{MSz} proved that (the modified) Fischer's conjecture holds for $k=4$. Another partial result was obtained by Csaba and Mydlarz~\cite{Cs}, who gave a function $f(k)$ with $f(k) \to 0$ as $k \to \infty$ such that the conjecture holds for large $n$ if one strengthens the degree assumption to  $\delta^*(G) \geq (k-1)n/k + f(k)n$. Recently, an approximate version of the conjecture assuming the degree condition $\delta^*(G) \geq (k-1)n/k + o(n)$ was proved independently and simultaneously by Keevash and Mycroft~\cite{KM}, and by Lo and Markstr\"om~\cite{LM}. The proof in \cite{KM} was a quick application of the geometric theory of hypergraph matchings developed in the same paper; this will be formally introduced in the next section. By a careful analysis of the extremal cases of this result, we will obtain the following theorem, the case $r=k$ of which shows that (the modified) Fischer's conjecture holds for any sufficiently large graph. Note that the graph $\Gamma_{n, r, k}$ in the statement is defined in Construction~\ref{fischereg}.

\begin{theo} \label{partitehajnalszem}
For any $r \geq k$ there exists $n_0$ such that for any $n \geq n_0$ with $k \mid rn$ the following statement holds.
Let $G$ be an $r$-partite graph whose vertex classes each have size~$n$ such that  $\delta^*(G) \geq (k-1)n/k$. 
Then $G$ contains a perfect $K_k$-packing, unless $rn/k$ is odd, $k \mid n$, and $G \cong \Gamma_{n, r, k}$.
\end{theo}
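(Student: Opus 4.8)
The plan is to argue by contradiction: suppose $G$ is an $r$-partite graph with classes of size $n$, $\delta^*(G) \ge (k-1)n/k$ and $k \mid rn$, but $G$ has no perfect $K_k$-packing, and deduce that $rn/k$ is odd, $k \mid n$, and $G \cong \Gamma_{n,r,k}$. The first step is to pass to the language of hypergraph matchings. Let $H$ be the $k$-uniform hypergraph on vertex set $V_1 \cup \dots \cup V_r$ whose edges are the $k$-sets spanning a copy of $K_k$ in $G$, and let $J$ be the clique complex of $G$. Since $G$ is $r$-partite, every edge of $H$ meets $k$ distinct classes, and a perfect $K_k$-packing of $G$ is exactly a perfect matching of $H$. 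The hypothesis $\delta^*(G) \ge (k-1)n/k$ translates into the statement that any $k-1$ vertices lying in distinct classes and spanning a clique have, in each remaining class, at least $n - (k-1)n/k = n/k$ common neighbours; this is precisely the density/degree input needed to invoke the geometric theory of hypergraph matchings from \cite{KM}.

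Next I would apply the main results of \cite{KM}, in the stability form that drives the extremal analysis: either $H$ has a perfect matching, or the complex $J$ is $o(1)$-close, in edit distance, to an extremal configuration, which is either a \emph{space barrier} or a \emph{divisibility barrier} for perfect matchings. Since we are assuming there is no perfect matching, one of the latter holds. I expect the space-barrier case to be excluded by the \emph{exact} degree condition: the extremal space-barrier configurations for this problem have partite minimum degree strictly below $(k-1)n/k$, so a graph that is $o(1)$-close to one but still has $\delta^*(G) \ge (k-1)n/k$ retains enough slack to complete a perfect $K_k$-packing by a greedy or absorbing argument, a contradiction. This should be a short counting/stability step about the possible structure of near-space-barrier graphs with high $\delta^*$.

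The substance of the argument is the divisibility-barrier case. Here $J$ is $o(1)$-close to a configuration in which each class $V_i$ is partitioned into parts so that the index vectors of all but $o(n^k)$ of the cliques of $G$ lie in a proper sublattice $L$ of the relevant full lattice. I would first classify the possible lattices $L$: using the exact degree condition to bound how unbalanced the parts can be and how many cliques can fail to span an edge, one shows that the only surviving possibility is the index-two ``parity'' sublattice coming from splitting each $V_i$ into two halves, which is exactly the structure underlying $\Gamma_{n,r,k}$; this forces $k \mid n$, and forces $rn/k$ to be odd (otherwise the parity congruence is not actually an obstruction, and the approximate result recovers a perfect matching). It then remains to run a \emph{cleaning} step: starting from a $G$ that is $o(1)$-close to $\Gamma_{n,r,k}$, show that any deviation from the precise extremal construction --- a misplaced vertex, an extra or a missing edge, or an imbalance in the part sizes --- creates enough local flexibility, via small rerouting gadgets and an absorbing structure, to build a perfect $K_k$-packing, contradicting our assumption; hence $G$ is \emph{exactly} $\Gamma_{n,r,k}$. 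Finally, one checks directly, from the parity computation built into Construction~\ref{fischereg}, that $\Gamma_{n,r,k}$ itself has no perfect $K_k$-packing when $rn/k$ is odd and $k \mid n$.

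The main obstacle is this last cleaning step. For general $r \ge k$ one must classify all the ways a graph can be close to but not equal to $\Gamma_{n,r,k}$ and show that in each of them the slack relative to the extremal construction can be converted into a perfect packing; this demands careful local surgery together with a robust absorbing method adapted to the multipartite clique setting, and it is where essentially all of the work and all of the case analysis lie. By contrast, the reduction to hypergraph matchings and the appeal to \cite{KM} are comparatively routine given the machinery already developed there.
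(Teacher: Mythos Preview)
Your high-level strategy --- apply the hypergraph matching dichotomy from \cite{KM} globally and then do a stability analysis directly against $\Gamma_{n,r,k}$ --- has a genuine structural gap: the barriers do not single out $\Gamma_{n,r,k}$ as the unique near-extremal configuration. Your claim that ``the only surviving possibility is the index-two parity sublattice coming from splitting each $V_i$ into two halves'' is false. The partition $\Part$ in the divisibility barrier of Theorem~\ref{partitematching} may have up to $kr$ parts, and the lattice $L^\mu_\Part(J_k)$ can be incomplete in many ways that have nothing to do with the parity structure of $\Gamma_{n,r,k}$. Similarly, your dismissal of the space-barrier case is wrong: a graph with $\delta^*(G) \ge (k-1)n/k$ can perfectly well be $\beta$-contained in some $J_r(S,j)_k$, since the space barrier only constrains how cliques intersect $S$, not the partite minimum degree of $G$.

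What the paper actually shows is that both barriers encode \emph{splittability}: a space barrier or (for $p \ge 3$) a divisibility barrier in a row forces that row to decompose further into subrows between which $G$ is nearly complete bipartite (Lemmas~\ref{splittable} and~\ref{split}). This is why the proof proceeds via an \emph{iterative row decomposition} (Proposition~\ref{iterate}): one keeps splitting until no row is splittable, at which point each row is barrier-free and hence has a balanced perfect clique packing by Theorem~\ref{partitematching}, except possibly for pair-complete rows with $p_i=2$. The extremal configuration $\Gamma_{n,r,k}$ arises only as one very specific terminal row structure (one pair-complete row of width~$2$ and $k-2$ rows of width~$1$), and the exact degree condition is used not to rule out space barriers globally but to cover bad vertices and balance block sizes in Lemma~\ref{diagonalmindeg}. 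Your proposed ``cleaning step'' against $\Gamma_{n,r,k}$ alone would miss all the other terminal row structures, which also require substantial work (Sections~\ref{sec:packing} and~\ref{sec:proof2}) to equip with balanced packings and then glue together.
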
 

We now give the generalised version of the construction of Catlin~\cite{C} which shows Fischer's original conjecture to be false.

\begin{construct} \label{fischereg} Suppose $rn/k$ is odd and $k$ divides $n$. Let $V$ be a vertex set partitioned into parts $V_1,\dots,V_r$ of size $n$. Partition each $V_i$, $i \in [r]$ into subparts $V^j_i$, $j \in [k]$ of size $n/k$. Define a graph $\Gamma_{n, r, k}$, where for each $i,i' \in [r]$ with $i \ne i'$ and $j \in [k]$, if $j \geq 3$ then any vertex in $V^j_i$ is adjacent to all vertices in $V^{j'}_{i'}$ with $j' \in [k] \sm \{j\}$, and if $j=1$ or $j=2$ then any vertex in $V^j_i$ is adjacent to all vertices in $V^{j'}_{i'}$ with $j' \in [k] \sm \{3-j\}$.
\end{construct}

\begin{figure}[ht] 
\centering
\psfrag{1}{$V_1$}
\psfrag{2}{$V_2$}
\psfrag{3}{$V_3$}
\psfrag{4}{$\Big\}V^1$}
\psfrag{5}{$\Big\}V^2$}
\psfrag{6}{$\Big\}V^3$}
\includegraphics[width=6cm]{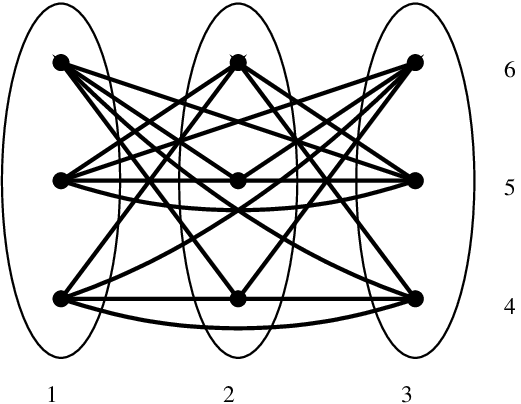}
\caption{Construction~\ref{fischereg} for the case $k=r=3$.}
\label{fig:construct}
\end{figure}

Figure~\ref{fig:construct} shows Construction~\ref{fischereg} for the case $k=r=3$. For $n=k$ this is the exact graph of the construction; for larger $n$ we `blow up' the graph above, replacing each vertex by a set of size $n/k$, and each edge by a complete bipartite graph between the corresponding sets. In general, it is helpful to picture the construction as an $r$ by $k$ grid, with columns corresponding to parts $V_i$, $i \in [r]$ and rows $V^j = \bigcup_{i \in [r]} V^j_i$, $j \in [k]$ corresponding to subparts of the same superscript. Vertices have neighbours in other rows and columns to their own, except in rows $V^1$ and $V^2$, where vertices have neighbours in other columns in their own row and other rows besides rows $V^1$ and $V^2$. Thus $\delta^*(G) = (k-1)n/k$. We claim that there is no perfect $K_k$-packing. For any $K_k$ has at most one vertex in any $V^j$ with $j \geq 3$, so at most $k-2$ vertices in $\bigcup_{j \geq 3} V^j$. Also $\bsize{\bigcup_{j \geq 3} V^j}=(k-2)rn/k$, and there are $rn/k$ copies of $K_k$ in a perfect packing. Thus each $K_k$ must have $k-2$ vertices in $\bigcup_{j \geq 3} V^j$, and so $2$ vertices in $V^1 \cup V^2$, which must either both lie in $V^1$ or both lie in $V^2$. However, $|V^1| = rn/k$ is odd, so $V^1$ cannot be perfectly covered by pairs. Thus $G$ contains no perfect $K_k$-packing.

This paper is organised as follows. In the next section we introduce ideas and results from~\cite{KM} on perfect matchings in $k$-graphs. Section~\ref{sec:outline} gives an outline of the proof of Theorem~\ref{partitehajnalszem}. In Sections~\ref{sec:blocks} to~\ref{sec:proof2} we prove several preliminary lemmas, before combining these lemmas in Section~\ref{proof} to prove Theorem~\ref{partitehajnalszem}.

\medskip

\nib{Notation.} The following notation is used throughout the paper: $[k]=\{1, \dots, k\}$; if $X$ is a set then $\binom{X}{k}$ is
the set of subsets of $X$ of size $k$; $x \ll y$ means that for every $y > 0$ there exists some $x_0 > 0$ such that the subsequent
statement holds for any $x < x_0$ (such statements with more variables are defined similarly); if $x$ is a vertex in a graph then $N(x)$ is the neighbourhood of $x$.

\section{Perfect matchings in hypergraphs} \label{sec:theory}

In this section we describe the parts of the geometric theory of perfect matchings in hypergraphs from~\cite{KM} that we will use in the proof of Theorem~\ref{partitehajnalszem}. We start with some definitions.
A \emph{hypergraph} $G$ consists of a vertex set $V$ and an edge set $E$, where each edge $e \in E$ is a subset of $V$. We say that $G$ is a \emph{$k$-graph} if every edge has size $k$.
A \emph{matching} $M$ in $G$ is a set of vertex-disjoint edges in $G$. 
We call $M$ \emph{perfect} if it covers all of $V$. 
We identify a hypergraph $H$ with its edge set,
writing $e \in H$ for $e \in E(H)$, and $|H|$ for $|E(H)|$.
A \emph{$k$-system} is a hypergraph $J$ in which every edge of $J$ has size at most $k$
and $\es \in J$. We refer to the edges of size $r$ in $J$
as \emph{$r$-edges of~$J$}, and write $J_r$ to denote the $r$-graph on $V(J)$ formed by these edges.
A \emph{$k$-complex} $J$ is a $k$-system whose edge set is closed under inclusion,
i.e.\ if $e \in J$ and $e' \sub e$ then $e' \in J$.
For any non-empty $k$-graph $G$, we may generate a $k$-complex $G^\le$
whose edges are any $e \sub V(G)$ such that $e \sub e'$ for some edge $e' \in G$.

Let $V$ be a set of vertices, and let $\Part$ partition $V$ into parts $V_1, \dots, V_r$ of size $n$. Then we say that a hypergraph $G$ with vertex set $V$ is $\Part$-partite if $|e \cap V_i| \leq 1$ for every $i \in [r]$ and $e \in G$. We say that $G$ is $r$-partite if it is $\Part$-partite for some partition $\Part$ of $V$ into $r$ parts.

Let $J$ be a $\Part$-partite $k$-system on $V$. For each $0 \leq j \leq k-1$ we define the
\emph{partite minimum $j$-degree} $\delta^*_j(J)$ as the largest $m$ such that any $j$-edge $e$ has
at least $m$ extensions to a $(j+1)$-edge in any part not intersected by $e$, i.e.\
$$\delta^*_j(J) := \min_{e \in J_j} \min_{i:e \cap V_i = \es} |\{v \in V_i : e \cup \{v\} \in J\}|.$$
The \emph{partite degree sequence} is $\delta^*(J) = (\delta_0^*(J), \dots, \delta_{k-1}^*(J))$.
Note that we suppress the dependence on $\Part$ in our notation: this will be clear from the context. Note also that this is {\em not} the standard notion of degree used in $k$-graphs, in which the degree of a set is the number of edges containing it.
Our minimum degree assumptions will always be of the form $\delta(J) \ge \ab$ pointwise for some vector
$\ab = (a_0,\dots,a_{k-1})$, i.e.\ $\delta_i(J) \ge a_i$ for $0 \le i \le k-1$. It is helpful to interpret
this `dynamically' as follows: when constructing an edge of $J_k$ by greedily choosing one vertex at
a time, there are at least $a_i$ choices for the $(i+1)$st vertex (this is the reason for the requirement that $\emptyset \in J$, which we need for the first choice in the process). 

The following key definition relates our theorems on hypergraphs to graphs. Fix $r \geq k$ and a partition $\Part$ of a vertex set $V$ into $r$ parts $V_1, \dots, V_r$ of size $n$. Let $G$ be a $\Part$-partite graph on $V$. Then the {\em clique $k$-complex} $J(G)$ of $G$ is the $k$-complex whose edges of size $i$ are precisely the copies of $K_i$ in $G$ for $0 \leq i \leq k$. Note that $J(G)$ must be $\Part$-partite. Furthermore, if $\delta^*(G) \geq (k-1)n/k -\alpha n$ and $0 \leq i \leq k-1$, then the vertices of any copy of $K_i$ in $G$ have at least $n - in/k - i \alpha n$ common neighbours in each $V_j$ which they do not intersect. That is, if $G$
satisfies $\delta^*(G) \geq (k-1)n/k -\alpha n$, then the clique $k$-complex $J(G)$ satisfies
\begin{equation} \label{eq:mindeg}
\delta^*(J(G)) \geq \left(n, \left(\frac{k-1}{k} - \alpha\right)n, \left(\frac{k-2}{k} - 2\alpha\right) n, \dots, \left(\frac{1}{k} - (k-1)\alpha\right) n\right).
\end{equation} 
Note also that any perfect matching in the $k$-graph $J(G)_k$ corresponds to a perfect $K_k$-packing in $G$. So if we could prove that any $\Part$-partite $k$-complex $J$ on $V$ which satisfies~$(\ref{eq:mindeg})$ must have a perfect matching in the $k$-graph $J_k$, then we would have already proved Theorem~\ref{partitehajnalszem}! Along these lines, Theorem~2.4 in~\cite{KM} shows that any such $J$ must have a matching in $J_k$ which covers all but a small proportion of the vertices of $J$. (Here we assume $1/n \ll \alpha \ll 1/r, 1/k$). However, two different families of constructions show that this condition does not guarantee a perfect matching in $J_k$; we refer to these as \emph{space barriers} and \emph{divisibility barriers}. We will describe these families in some detail, since the results of~\cite{KM} show that these are essentially the only $k$-complexes $J$ on $V$ which satisfy (\ref{eq:mindeg}) but do not have a perfect matching in $J_k$. Firstly, space barriers are characterised by a bound on the size of the intersection of every edge with some fixed set $S \sub V(J)$. If $S$ is too large, then $J_k$ cannot contain a perfect matching. The following construction gives the precise formulation.

\begin{construct}\label{spacebarpartite} (Space barriers)
Suppose $\Part$ partitions a set $V$ into $r$ parts $V_1,\dots,V_r$ of size $n$. Fix $j \in [k-1]$ and a set $S \sub V$ containing $s = \bfl{(j/k+\alpha) n}$ vertices in each part~$V_j$. Then we denote by $J = J_r(S,j)$ the $k$-complex in which $J_i$ (for $0 \le i \le k$) consists of all $\Part$-partite sets $e \subseteq V$ of size $i$ that contain at most $j$ vertices of $S$. Observe that $\delta^*_i(J) = n$ for $0 \le i \le j-1$ and $\delta^*_i(J)=n-s$ for $j \le i \le k-1$, so (\ref{eq:mindeg}) is satisfied. However, any matching in $J_k$ has size at most $\bfl{\frac{|V \sm S|}{k-j}}$ and so leaves at least $r(\alpha n - k)$ vertices uncovered.
\end{construct}

Having described the general form of space barriers, we now turn our attention to divisibility barriers. These are characterised by every edge satisfying an arithmetic condition with respect to some partition $\Qart$ of $V$. To be more precise, we need the following definition. Fix any partition $\Qart$ of a vertex set $V$ into $d$ parts $V_1, \dots, V_d$. For any $\Qart$-partite set $S \subseteq V$ (that is, $S$ has at most one vertex in each part of $\Qart$), the \emph{index set of $S$ with respect to $\Qart$} is $i_\Qart(S) := \{i \in [d] : |S \cap V_i| = 1\}$. For general sets $S \subseteq V$, we have the similar notion of the \emph{index vector of $S$ with respect to $\Qart$}; this is the vector $\ib_\Qart(S) := (|S \cap V_1|, \dots, |S \cap V_d|)$ in $\Z^d$. So $\ib_\Qart(S)$ records how many vertices of $S$ are in each part of $\Qart$. Observe that if $S$ is $\Qart$-partite then $\ib(S)$ is the characteristic vector of the index set $i(S)$. When $\Qart$ is clear from the context, we write simply $i(S)$ and $\ib(S)$ for $i_\Qart(S)$ and $\ib_\Qart(S)$ respectively, and refer to $i(S)$ simply as the \emph{index} of $S$. We will consider the partition $\Qart$ to define the order of its parts so that $\ib_\Qart(S)$ is well-defined.

\begin{construct} \label{divbar} (Divisibility barriers)
Suppose $\Qart$ partitions a set $V$ into $d$ parts, and $L$ is a lattice in $\Z^d$ (i.e.\ an additive subgroup) with $\ib(V) \notin L$. Fix any $k \ge 2$, and let $G$ be the $k$-graph on $V$ whose edges are all $k$-tuples $e$ with $\ib(e) \in L$.
For any matching $M$ in $G$ with vertex set $S = \bigcup_{e \in M} e$ we have $\ib(S) = \sum_{e \in M} \ib(e) \in L$.
Since we assumed that $\ib(V) \notin L$ it follows that $G$ does not have a perfect matching.
\end{construct}

For the simplest example of a divisibility barrier take $d=2$ and $L = \bgen{(-2,2),(0,1)}$. So $(x,y) \in L$ precisely when $x$ is even. Then the construction described has $|V_1|$ odd, and the edges of $G$ are all $k$-tuples $e \sub V$ such that $|e \cap V_1|$ is even. If $|V| = n$ and $|V_1| \sim n/2$, then any set of $k-1$ vertices of $G$ is contained in around $n/2$ edges of $G$, but $G$ contains no perfect matching.

We now consider the multipartite setting. Let $\Part$ partition a vertex set $V$ into parts $V_1, \dots, V_r$ of size $n$, and let
$\Qart$ be a partition of $V$ into $d$ parts $U_1, \dots, U_d$ which refines $\Part$. Then we say that a lattice $L \sub \Z^d$ is
{\em complete with respect to $\Part$} if $L$ contains every difference of basis vectors $\ub_i - \ub_j$ for which $U_i$ and $U_j$
are contained in the same part $V_\ell$ of $\Part$, otherwise we say that $L$ is {\em incomplete with respect to $\Part$}. The
idea behind this definition is that if $L$ is incomplete with respect to $\Part$, then it is possible that $\ib_\Qart(V) \notin
L$, in which case we would have a divisibility barrier to a perfect matching, whilst if $L$ is complete with respect to $\Part$
then this is not possible. 

There is a natural notion of minimality for an incomplete lattice $L$ with respect to $\Part$. We say that $\Qart$ is \emph{transferral-free} if $L$ does not contain any difference of basis vectors $\unit_i-\unit_j$ for which $U_i, U_j$ are contained in the same part $V_\ell$ of $\Part$. For suppose $L$ does contain some such difference $\unit_i-\unit_j$ and form a partition $\Qart'$ from $\Qart$ by merging parts $U_i$ and $U_j$ of $\Qart$. Let $L' \subseteq \Z^{d-1}$ be the lattice formed by this merging (that is, by replacing the $i$th and $j$th coordinates with a single coordinate equal to their sum). Then $L'$ is also incomplete with respect to $\Part$, so we have a smaller divisibility barrier.

Let $J$ be an $r$-partite $k$-complex whose vertex classes $V_1, \dots, V_r$ each have size $n$. The next theorem, Theorem~2.9 from~\cite{KM}, states that if $J$ satisfies (\ref{eq:mindeg}) and $J_k$ is not `close' to either a space barrier or a divisibility barrier, then $J_k$ contains a perfect matching. Moreover, we can find a perfect matching in $J_k$ which has roughly the same number of edges of each index. More precisely, for a perfect matching $M$ in $J_k$ and a set $A \in \binom{[r]}{k}$ let $N_A(M)$ be the number of edges $e \in M$ with index $i(e) = A$. We say that $M$ is \emph{balanced} if $N_A(M)$ is constant over all $A \in \binom{[r]}{k}$, that is, if there are equally many edges of each index. Similarly, we say that $M$ is \emph{$\gamma$-balanced} if $N_A(M) = (1 \pm \gamma) N_B(M)$ for any $A, B \in \binom{[r]}{k}$. Finally, we formalise the notion of closeness to a space or divisibility barrier as follows. Let $G$ and $H$ be $k$-graphs on a common vertex set $V$ of size~$n$. We say $G$ is {\em $\beta$-contained} in $H$ if all but at most $\beta n^k$ edges of $G$ are edges of $H$. Also, given a partition $\Part$ of $V$ into $d$ parts, we define the {\em $\mu$-robust edge lattice} $L^\mu_\Part(G) \sub \Z^d$ to be the lattice generated by all vectors $\vb \in \Z^d$ such that there are at least $\mu n^k$ edges $e \in G$ with $\ib_\Part(e) = \vb$. 

\begin{theo} \label{partitematching}
Suppose that $1/n \ll \gamma \ll \alpha \ll \mu, \beta \ll 1/r$, $r \geq k$ and $k \mid rn$. Let~$\Part'$ partition a set $V$ into parts $V_1, \dots, V_r$ each of size~$n$. Suppose that $J$ is a $\Part'$-partite $k$-complex with $$\delta^*(J) \geq \left(n, \left(\frac{k-1}{k} - \alpha\right)n, \left(\frac{k-2}{k} - \alpha\right) n, \dots, \left(\frac{1}{k} - \alpha\right) n\right).$$
Then $J$ has at least one of the following properties.
\begin{description}
\item[1 (Matching)] $J_k$ contains a $\gamma$-balanced perfect matching.
\item[2 (Space barrier)] $J_k$ is $\beta$-contained in $J_r(S,j)_k$ for some $j \in [k-1]$ and $S \sub V$ with $\bfl{jn/k}$ vertices in each $V_i$, $i \in [r]$.
\item[3 (Divisibility barrier)] There is some partition $\Part$ of $V(J)$ into $d \le kr$
parts of size at least  $\delta^*_{k-1}(J) - \mu n$ such that $\Part$ refines $\Part'$
and $L^\mu_\Part(J_k)$ is incomplete with respect to $\Part'$.
\end{description}
\end{theo}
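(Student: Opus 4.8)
The plan is to prove Theorem~\ref{partitematching} by the absorbing method, with the three alternatives of the conclusion emerging from a structural (``geometric'') analysis of transferrals in $J_k$; throughout, write $\Part'$ for the given partition of $V$ into $r$ classes of size $n$. First I would reduce to two sub-tasks: \emph{(i)} finding a small \emph{absorbing} matching $M_0$ in $J_k$ with $|V(M_0)| = o(n)$ such that for every $\Part'$-partite set $W \sub V \sm V(M_0)$ with $|W \cap V_i|$ equal for all $i \in [r]$ and $|W| = o(n)$, the $k$-graph $J_k$ restricted to $V(M_0) \cup W$ has a perfect matching of $V(M_0) \cup W$; and \emph{(ii)} finding a $(\gamma/2)$-balanced matching $M_1$ in the complex $J - V(M_0)$ that covers all but such a leftover set $W$. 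Given both, absorbing $W$ into $M_0$ yields a perfect matching of $J_k$, and since $|V(M_0)| = o(n)$ its balance differs from that of $M_1$ by only $o(n)$ edges of each index, so it is $\gamma$-balanced, i.e.\ outcome~1 holds.

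Task~\emph{(ii)} splits further: the degree-sequence hypothesis lets us apply the almost-perfect matching result (Theorem~2.4 of~\cite{KM}) to $J - V(M_0)$ to cover all but $o(n)$ vertices, and the extra requirement is $(\gamma/2)$-balance. Balancing a given almost-perfect matching amounts to repeatedly performing bounded-size \emph{index transferrals} --- replacing a constant number of its edges by edges of other indices on the same vertex set so as to even out the index counts --- so this reduces to the claim that such index transferrals are abundant, which is part of the analysis in the next step. (Equivalently, one shows the polytope of fractional perfect matchings of $J_k$ meets the balanced subspace and then integralises.)

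The heart of the proof is the construction of $M_0$ in task~\emph{(i)} and the associated case division. For vertices $u, v$ of $V$, a \emph{transferral} (of bounded size) is a pair of matchings $M, M'$ in $J_k$ with $|V(M)| = |V(M')|$, $V(M) \sm V(M') = \{u\}$ and $V(M') \sm V(M) = \{v\}$; it lets one swap $u$ for $v$ inside any matching that contains $M$. Standard absorbing arguments produce the absorbing matching $M_0$ --- and, as above, permit rebalancing --- provided transferrals are plentiful: namely that, for a suitable refinement $\Part$ of $\Part'$ into parts of size at least $\delta^*_{k-1}(J) - \mu n$, the lattice generated by the realisable difference vectors $\ib_\Part(\{u\}) - \ib_\Part(\{v\})$ is complete with respect to $\Part'$, and there is no ``space'' obstruction blocking the matchings one needs. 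So the essential step is the structural lemma of~\cite{KM}: from the degree sequence of $J$ one greedily builds short transferrals by local switching, and shows the only ways this can fail are \emph{(a)} there is a set $S \sub V$ with $\bfl{jn/k}$ vertices in each class such that all but $\beta n^k$ edges of $J_k$ meet $S$ in at most $j$ vertices (outcome~2), or \emph{(b)} there is a refinement $\Part$ of $\Part'$ into $d \le kr$ parts of size at least $\delta^*_{k-1}(J) - \mu n$ for which $L^\mu_\Part(J_k)$ is incomplete with respect to $\Part'$ (outcome~3); here $d \le kr$ because the $(k-1)$-degree condition forbids splitting any class $V_i$ into more than $k$ parts of that size.

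The difficulty is concentrated in this last structural lemma: showing that a stalled transferral-building process is always accounted for by one of the two clean barriers, with matching quantitative parameters ($\beta n^k$ in the space case, parts of size $\ge \delta^*_{k-1}(J) - \mu n$ in the divisibility case). This requires the full lattice/geometry machinery of~\cite{KM} --- locating many edges of $J_k$ spread over few index classes, extracting transferrals from the degree sequence, and analysing when the realisable differences fail to span, which after passing to the partition into common-neighbourhood ``atoms'' is precisely the incomplete robust edge lattice of outcome~3, unless the obstruction is instead a dense set $S$ with bounded edge-intersection, giving outcome~2. Pinning down that the extremal $S$ has exactly $\bfl{jn/k}$ vertices per class, rather than merely many, needs a stability argument against Construction~\ref{spacebarpartite}. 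The remaining ingredients --- Theorem~2.4 of~\cite{KM}, assembling $M_0$ from transferrals, integralising the balanced fractional matching, and propagating balance through the absorption --- are routine given this machinery.
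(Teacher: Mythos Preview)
The paper does not prove this theorem at all: it is quoted as Theorem~2.9 of the companion paper~\cite{KM}, with only a one-sentence remark that the $\gamma$-balanced refinement of outcome~1 follows by inspecting the short derivation of that theorem from Theorem~7.11 in~\cite{KM}. So there is no ``paper's own proof'' to compare against here; the present paper treats this result as a black box imported from the geometric theory developed elsewhere.

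Your sketch is broadly consonant with how~\cite{KM} actually proceeds --- an absorbing-method argument in which the existence of enough transferrals is governed by a lattice of realisable index-differences, and the failure modes are identified with the space and divisibility barriers. But as written it is a plan rather than a proof: the substantive content is deferred to ``the full lattice/geometry machinery of~\cite{KM}'', which is precisely the paper you would be citing the theorem from. In particular, the structural lemma you describe (that a stalled transferral process must be explained by one of the two barriers, with the stated quantitative parameters) is essentially the entire difficulty, and you have not indicated how to prove it beyond naming its conclusion. The balancing step is also glossed: turning an $\eps$-balanced almost-perfect matching into a $(\gamma/2)$-balanced one via index transferrals requires knowing that the relevant transferrals exist in sufficient numbers, which again rests on the lattice analysis you have deferred. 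If the intent is simply to cite~\cite{KM}, then the paper's one-line citation is the honest version; if the intent is to give an independent proof, what you have is an outline with the hard lemma left as a placeholder.
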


Note that the fact that the perfect matching in $J_k$ is $\gamma$-balanced in the first property is not stated in the statement of the theorem in~\cite{KM}. However, examining the short derivation of this theorem from Theorem~7.11 in~\cite{KM} shows this to be the case.

\section{Outline of the proof} \label{sec:outline}

\begin{figure}[ht] 
\centering
\psfrag{1}{$V_1$}
\psfrag{2}{$V_2$}
\psfrag{3}{$V_3$}
\psfrag{4}{$V_4$}
\psfrag{5}{$\Big\}X^2$}
\psfrag{6}{$\Bigg\}X^1$}
\psfrag{7}{$X^1_1$}
\psfrag{8}{$X^1_2$}
\psfrag{9}{$X^1_3$}
\psfrag{10}{$X^1_4$}
\psfrag{11}{$X^2_1$}
\psfrag{12}{$X^2_2$}
\psfrag{13}{$X^2_3$}
\psfrag{14}{$X^2_4$}
\psfrag{a}{\hspace{.5cm}$p_1 = 2$}
\psfrag{b}{\hspace{.5cm}$p_2 = 1$}
\includegraphics[width=10cm]{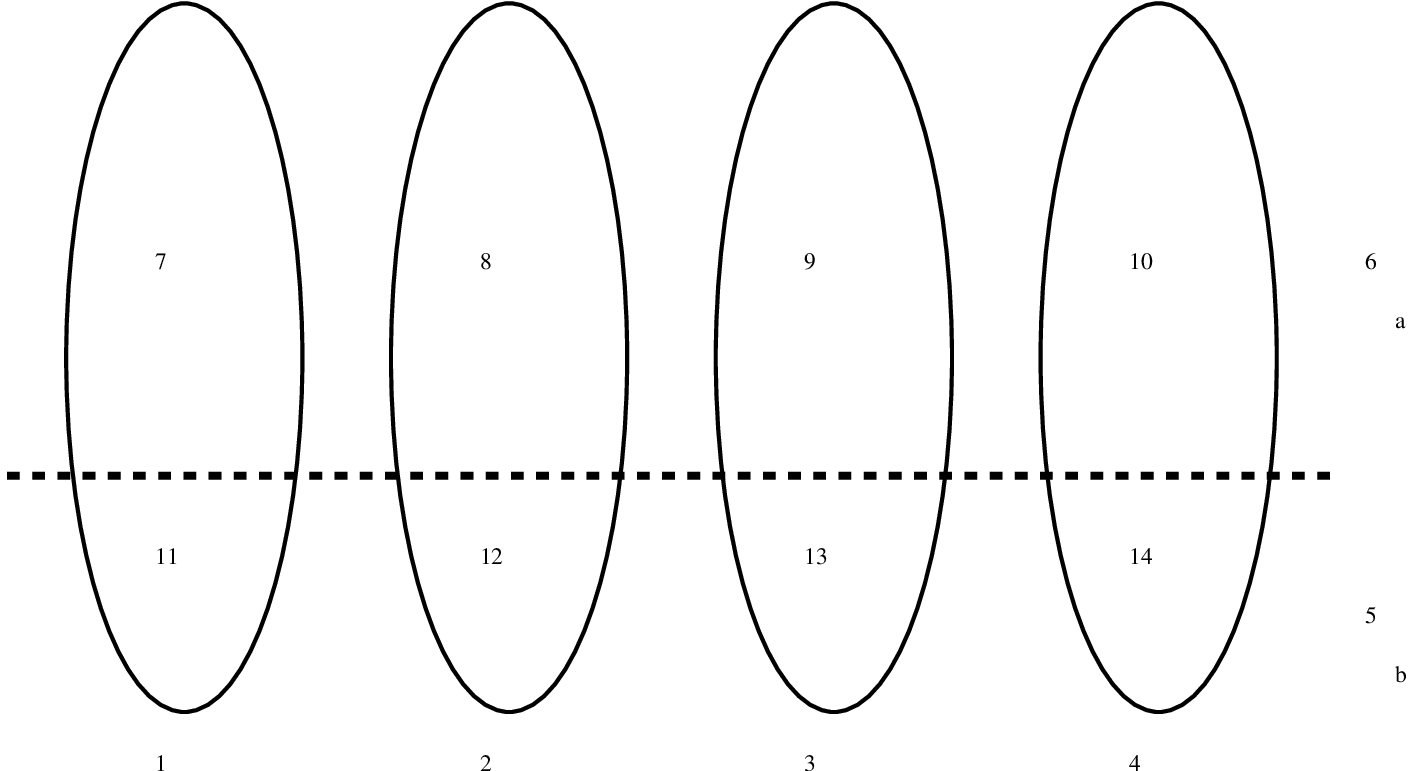} 
\caption{A row-decomposition of a $4$-partite graph $G$ into $2$ rows.}
\label{fig:decomp}
\end{figure}

In this section we outline the proof of Theorem~\ref{partitehajnalszem}. For ease of explanation we restrict to the case when $G$ is an $r$-partite graph whose vertex classes each have size $kn$ and $\delta^*(G) \geq (k-1)n$. Our strategy consists of the following three steps:
\begin{enumerate}[(i)]
\item Impose a row structure on $G$.
\item Find balanced perfect clique-packings in each row.
\item Glue together the row clique-packings to form a $K_k$-packing of $G$.
\end{enumerate} 

For step (i) we partition $V(G)$ into \emph{blocks} $X^i_j$, so that each vertex class $V_j$ is partitioned into $s$ blocks $X^1_j, \dots, X^s_j$. This partition is best thought of as an $s \times r$ grid, with rows $X^i := \bigcup_{j \in [r]} X^i_j$ and columns the vertex classes $V_j = \bigcup_{i \in [s]} X^i_j$. We insist that all the blocks in a given row $X^i$ have equal size $p_i n$, where $\sum_{i \in [s]} p_i = k$. We call a partition of $V(G)$ which satisfies these conditions an \emph{$s$-row-decomposition} of $G$. We also require that $G$ has density close to $1$ between any two blocks which do not lie in the same row or column (we refer to the smallest such density as the \emph{minimum diagonal density}). Figure~\ref{fig:decomp} illustrates this structure.
We begin with the trivial 1-row-decomposition of $G$ with a single row (so the blocks are the vertex classes $V_j$). If it is possible to split this row into two rows to obtain a row-decomposition with minimum diagonal density at least $1-d$ (where $d$ will be small), then we say that $G$ is \emph{$d$-splittable}. If so, we partition $G$ in this manner, and then examine in turn whether either of the two rows obtained is splittable (for some larger value of $d$). By repeating this process, we obtain a row-decomposition of $G$ with high minimum diagonal density in which no row is splittable; this argument is formalised in Lemma~\ref{iterate}.

For step (ii) we require a balanced perfect $K_{p_i}$-packing in each row $X^i$. We first use the results of Section~\ref{sec:theory} to obtain a near-balanced perfect $K_{p_i}$-packing in $G[X^i]$. Fix $i$ and take $J$ to be the clique $p_i$-complex of $G[X^i]$. So we regard the row $X^i$ as an $r$-partite vertex set whose parts are the blocks $X^i_1, \dots, X^i_r$, and the edges of $J_j$ are the $j$-cliques in $G[X^i]$ for $j \leq p_i$. The assumption $\delta^*(G) \geq (k-1)n$ implies that 
\begin{equation} \label{eq:mindega}
\nonumber \delta^*(J) \geq \left(p_i n, (p_i-1)n, (p_i-2)n, \dots, n\right).
\end{equation} 
Then Theorem~\ref{partitematching} (with $p_i$ playing the role of $k$) implies that $J_{p_i}$ contains a near-balanced perfect matching, unless $J_{p_i}$ is close to a space or divisibility barrier. In Section~\ref{sec:blocks} we consider a space barrier, showing in Lemma~\ref{splittable} that since $G[X^i]$ is not $d$-splittable, $J_{p_i}$ cannot be close to a space barrier. We then consider a divisibility barrier in Section~\ref{sec:div}. For $p_i \geq 3$, Lemma~\ref{split} shows that since $G[X^i]$ is not $d$-splittable, $J_{p_i}$ also cannot be close to a divisibility barrier. However, the analogous statement for $p_i = 2$ is false, for the following reason. 

We say that $G[X^i]$ is `pair-complete' if it has a structure close to that which appears in rows $V^1$ and $V^2$ of Construction \ref{fischereg}. That is, there is a partition of $X^i$ into `halves' $S$ and $X^i \sm S$, such that each vertex class $V_j$ is partitioned into two equal parts, and both $G[S]$ and $G[X^i \sm S]$ are almost complete $r$-partite graphs. Such a row is not $d$-splittable if $r$ is odd, but $J_{2}$ is close to a divisibility barrier.  However, Lemma~\ref{2splitorpc} shows that this is essentially the only such example, that is, that if $p_i = 2$ and $G[X^i]$ is neither $d$-splittable nor pair-complete then $J_2$ is not close to a divisibility barrier. So unless $p_i = 2$ and $G[X^i]$ is pair-complete, Theorem~\ref{partitematching} implies that $J_{p_i}$ contains a near-balanced perfect $K_{p_i}$-packing.
In Section~\ref{sec:packing} we then show that we can actually obtain a balanced perfect matching in $J_{p_i}$. Indeed, in Lemma~\ref{theoremmatching} we first delete some `configurations' from $G[X^i]$; these are subgraphs of $G[X^i]$ that can be expressed as two disjoint copies of $K_{p_i}$ in $G[X^i]$ in two different ways (with different index sets). After these deletions we proceed as just described to find a near-balanced perfect $K_{p_i}$-packing in $G[X^i]$. Then by carefully choosing which pair of disjoint edges to add to the matching from each `configuration', we obtain a balanced perfect $K_{p_i}$-packing in $G[X^i]$, as required. This leaves only the case where $p_i = 2$ and $G[X^i]$ is pair-complete; in this case Lemma~\ref{paircompletematching} gives a balanced perfect $K_2$-packing in $G[X^i]$, provided that each half has even size.

For step (iii), we construct auxiliary hypergraphs, perfect matchings in which describe how to glue together the perfect $K_{p_i}$-packings in the rows into a perfect $K_k$-packing of $G$. Recall that the row-decomposition of $G$ was chosen to have large minimum diagonal density, so almost every vertex of any block $X^i_j$ has few non-neighbours in any block $X^{i'}_{j'}$ in a different row and column. Assume for now that this row-decomposition of $G$ has the stronger condition of large \emph{minimum diagonal degree}, i.e.\ that we can delete `almost' from the previous statement. For each row $i$, we partition its perfect $K_{p_i}$-packing into sets $E_{\sigma,i}$, one for each injective function $\sigma : [k] \to [r]$. For each $\sigma$ we then form an auxiliary $s$-partite $s$-graph $H_\sigma$, where for each $i \in [s]$ the $i$-th vertex class of $H_\sigma$ is the set $E_{\sigma,i}$ (so a copy of $K_{p_i}$ in $G[X^i]$ is a vertex of $H_\sigma$). Edges in $H_\sigma$ are those $s$-tuples of vertices for which the corresponding copies of $K_{p_i}$ together form a copy of $K_k$ in $G$. We defer the details of the partition to the final section of this paper; the crucial point is that the large minimum diagonal degree of $G$ ensures that each $H_\sigma$ has sufficiently large vertex degree to guarantee a perfect matching. Taking the copies of $K_k$ in $G$ corresponding to the union of these matchings gives a perfect $K_k$-packing in $G$, completing the proof.

The above sketch glosses over the use of the precise minimum degree condition in Theorem~\ref{partitehajnalszem}. Indeed, to replace our minimum diagonal density condition with a minimum diagonal degree condition, we must remove all `bad' vertices, namely those which have many non-neighbours in some block in a different row and column. To achieve this, before step (ii) we delete some vertex-disjoint copies of $K_k$ from $G$ which cover all bad vertices. We must ensure that the number of vertices deleted from row $X^i$ is a constant multiple of $p_i$ for each $i$, so that we will be able to join together the $K_{p_i}$-packings of the undeleted vertices of each $X^i$ to form a $K_k$-packing of $G$. We also need to ensure that each half has even size in pair-complete rows. This is accomplished in Section~\ref{sec:proof2}, which is the most lengthy and technical part of the paper. After this, it is fairly quick to complete the proof as outlined above in Section~\ref{proof}.

\section{Row decompositions and space barriers} \label{sec:blocks} 
In this section we formalise our description of row-decompositions and the iterative process of splitting rows described in Section~\ref{sec:outline}. We then show that the clique $p$-complex of any row obtained at the end of this process is not close to a space barrier. Note that many definitions and results of this section (and later sections) require that the size of each vertex class should be a multiple of $k$, which is not assumed in the statement of Theorem~\ref{partitehajnalszem}. However, our first step in the proof of Theorem~\ref{partitehajnalszem} will be to remove vertices so that this condition is satisfied, allowing these definitions and results to be used.

\subsection{Row-decompositions.}

Fix $r \geq 2$, and let $G$ be an $r$-partite graph on vertex classes $V_1, \dots, V_r$ each of size $kn$. Suppose $s \in [k]$ and $p_i$, $i \in [s]$ are positive integers with $\sum_{i \in [s]} p_i = k$. Write $p = (p_i:i \in [s])$. An \emph{$s$-row-decomposition} $X = (X^i_j)_{i \in [s], j \in [r]}$ of $G$ (of \emph{type} $p$), consists of subsets $X^i_j \sub V_j$ with $|X^i_j| = p_i n$ for each $i \in [s]$ and $j \in [r]$ such that each $V_j$ is partitioned by the sets $X^i_j$ with $i \in [s]$. We refer to the sets $X^i_j$ as the \emph{blocks}, and the sets $X^i := X^i_1 \cup \dots \cup X^i_r$ for $i \in [s]$ as the \emph{rows}. We call the parts $X_j := V_j = X^1_j \cup \dots \cup X^s_j$ for $j \in [r]$ the \emph{columns}, so $G$ has $s$ rows and $r$ columns. Given subsets $A,B$ of different vertex classes of $G$, let $G[A,B]$ denote the bipartite subgraph of $G$ induced by $A \cup B$. We write $e_G(A,B) = |G[A,B]|$ and define the \emph{density} of $G$ between $A$ and $B$ as $d_G(A,B) = \frac{e_G(A,B)}{|A||B|}$. We usually write $e(A,B)=e_G(A,B)$ and $d(A,B)=d_G(A,B)$, as $G$ is clear from the context. The \emph{minimum diagonal density} of $G$ is defined to be the minimum of $d(X^i_j, X^{i'}_{j'})$ over all $i \neq i'$ and $j \neq j'$. If $G$ has only one row then for convenience we define the minimum diagonal density of $G$ to be $1$. Note that all this terminology depends on the choice of row-decomposition of $G$, but this will be clear from the context.

For any $i \in [s]$ with $p_i \geq 2$ we may obtain an $(s+1)$-row-decomposition of $G$ by partitioning the row $X^i$ of $G$. Indeed, choose positive integers $y$ and $z$ with $y + z = p_i$. For each $j \in [r]$ partition $X^i_j$ into sets $Y^i_j$ and $Z^i_j$ with $|Y^i_j| = yn$ and $|Z^i_j| = zn$. Take $p'_i := y$, $p'_{s+1} := z$ and $p'_\ell := p_\ell$ for each $\ell \in [s] \sm \{i\}$, and for each $j \in [r]$ let $\hat{X}^i_j := Y^i_j$, $\hat{X}^{s+1}_j := Z^i_j$ and $\hat{X}^\ell_j := X^{\ell}_j$ for each $\ell \in [s] \sm \{i\}$. Then the blocks $\hat{X}^\ell_j$ form an $(s+1)$-row-decomposition of $G$ of type $p' = (p'_\ell: \ell \in [s+1])$.

Bearing in mind the proof strategy sketched above, we are happy to split rows provided that we keep the minimum diagonal density close to $1$. Thus we make the following definition. Let $G$ be an $r$-partite graph on vertex classes $V_1, \dots, V_r$ each of size $pn$. We say $G$ is \emph{$d$-splittable} if for some $p' \in [p-1]$ we may choose sets $S_i \sub V_i$, $i \in [r]$ with $|S_i| = p'n$, such that for any $i, i' \in [r]$ with $i \neq i'$ we have $d(S_i, V_{i'} \sm S_{i'}) \geq 1-d$. It is helpful to think of $G$ as being a row-decomposition with just one row; then $G$ is $d$-splittable if it is possible to partition this row into two rows as described above so that the minimum diagonal density is at least $1-d$. Note that this definition depends on $p$, however this will always be clear from the context. Note also that $G$ can never be $d$-splittable if $p=1$. The next proposition shows that we can iteratively split $G$ until we reach a row-decomposition which has high minimum diagonal density and does not have any splittable row.

\begin{prop} \label{iterate}
Suppose that $1/n \ll d_0 \ll \dots \ll d_k \ll 1/r$ and $r \geq 2$. Let $G$ be an $r$-partite graph on vertex classes $V_1, \dots, V_r$ each of size $kn$. Then for some $s \in [k]$ there exists an $s$-row-decomposition $X$ of $G$ with minimum diagonal density at least $1-k^2d_{s-1}$ such that each row $G[X^i]$ of $G$ is not $d_s$-splittable.
\end{prop}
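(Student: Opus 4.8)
The plan is to run the iterative splitting procedure described informally before the statement, and to control the accumulated loss in minimum diagonal density by a careful choice of the scale hierarchy $d_0 \ll \dots \ll d_k$. I would proceed by induction on the number of rows constructed so far, maintaining the invariant that after producing an $s$-row-decomposition $X$ the minimum diagonal density is at least $1 - k^2 d_{s-1}$. For the base case $s=1$ we take the trivial $1$-row-decomposition whose single row is all of $G$; its minimum diagonal density is $1$ by convention, and there are no diagonal pairs to check. Now suppose we have an $s$-row-decomposition $X$ of type $p = (p_1,\dots,p_s)$ with minimum diagonal density at least $1 - k^2 d_{s-1}$. If no row $G[X^i]$ is $d_s$-splittable we are done, so suppose some row $G[X^i]$ with $p_i \geq 2$ is $d_s$-splittable. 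By definition there are sets $S_j \subseteq X^i_j$ with $|S_j| = p'_i n$ for some $p'_i \in [p_i - 1]$ such that $d(S_j, X^i_{j'} \sm S_{j'}) \geq 1 - d_s$ for all $j \neq j'$. Split $X^i$ into the two new rows $\hat X^i = \bigcup_j S_j$ and $\hat X^{s+1} = \bigcup_j (X^i_j \sm S_j)$ as in the construction preceding the statement, keeping all other rows unchanged; this yields an $(s+1)$-row-decomposition of $G$.

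The key point is to verify that the new decomposition still has large minimum diagonal density. There are two kinds of diagonal pairs of blocks to consider. First, pairs of blocks where at least one lies in a row $X^\ell$ with $\ell \ne i$ (unchanged), or where both lie among the old rows: for these the density is unchanged and hence still at least $1 - k^2 d_{s-1} \geq 1 - k^2 d_s$. Second, the genuinely new diagonal pairs: those with one block in $\hat X^i$ and one in $\hat X^{s+1}$, in different columns. By the splittability hypothesis, $d(S_j, X^i_{j'} \sm S_{j'}) \geq 1 - d_s$ for $j \ne j'$, so each such new pair already has density at least $1 - d_s \geq 1 - k^2 d_s$. (A pair of blocks both inside $\hat X^i$, or both inside $\hat X^{s+1}$, lies in a single row and so is not a diagonal pair.) Hence the $(s+1)$-row-decomposition has minimum diagonal density at least $1 - k^2 d_s$, which matches the invariant for $s+1$ rows. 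The condition $d_{s-1} \ll d_s$ is not actually needed to maintain this particular bound — $1 - k^2 d_{s-1} \geq 1 - k^2 d_s$ already holds since $d_{s-1} \leq d_s$ — but having the whole chain $1/n \ll d_0 \ll \dots \ll d_k \ll 1/r$ available is convenient because it guarantees that at each of the at most $k$ stages the relevant densities are small enough (in particular small compared to $1/r$) for the notion of $d_s$-splittable to be meaningful and for block sizes $p_i n$ to be positive integers with $n$ large.

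The process terminates: each split strictly increases the number of rows, and since $\sum_i p_i = k$ with each $p_i \geq 1$, there can be at most $k$ rows, so at most $k-1$ splits occur and we end with $s \in [k]$. When the process halts, no row is $d_s$-splittable by construction, and the accumulated minimum diagonal density is at least $1 - k^2 d_{s-1}$, as required.

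I do not expect any serious obstacle here: the statement is essentially a bookkeeping lemma. The only thing that needs genuine care is making sure the hierarchy of constants is set up so that the density loss does not compound badly across the (at most $k$) rounds of splitting — which is exactly why the statement uses a chain $d_0 \ll \dots \ll d_k$ rather than a single parameter, so that the threshold for `splittable' can be taken progressively larger at each round while the bound $1 - k^2 d_{s-1}$ recorded at round $s$ stays controlled. One should also note that when a row $X^i$ has $p_i = 1$ it cannot be split (and need not be), so the procedure only ever splits rows with $p_i \geq 2$, consistent with the construction described before the statement.
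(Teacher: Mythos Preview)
Your inductive approach is the right idea, but there is a gap in the verification of the invariant. When you split row $i$ into $\hat X^i$ and $\hat X^{s+1}$, you claim that for diagonal pairs ``where at least one lies in a row $X^\ell$ with $\ell \ne i$'' the density is unchanged. This is false: if one block is $X^\ell_{j'}$ (unchanged) but the other is the new block $S_j \subsetneq X^i_j$, then the pair $(S_j, X^\ell_{j'})$ is not the old pair $(X^i_j, X^\ell_{j'})$. Passing from $X^i_j$ to its subset $S_j$ can only weaken the density bound: all the non-edges of $(X^i_j, X^\ell_{j'})$ could lie inside $(S_j, X^\ell_{j'})$, so from $d(X^i_j, X^\ell_{j'}) \geq 1 - k^2 d_{s-1}$ you only get $d(S_j, X^\ell_{j'}) \geq 1 - (p_i/p'_i)\, k^2 d_{s-1}$, which may be as bad as $1 - k^3 d_{s-1}$. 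Your invariant can still be rescued, since $d_{s-1} \ll d_s$ absorbs the extra factor of $k$, but you must actually carry out this step rather than assert the density is unchanged.

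The paper sidesteps this compounding issue entirely. Rather than maintaining an invariant through the splits, it runs the process to termination and then analyses each diagonal pair $(X^i_j, X^{i'}_{j'})$ of the \emph{final} decomposition directly. Any such pair was first separated at a single split, when some block $Y^\ell_j$ was partitioned into $S_j \supseteq X^i_j$ and $Y^\ell_j \sm S_j$, and simultaneously $Y^\ell_{j'}$ into $S_{j'}$ and $Y^\ell_{j'} \sm S_{j'} \supseteq X^{i'}_{j'}$; that split used $d_t$-splittability for some $t \leq s-1$, so $d(S_j, Y^\ell_{j'} \sm S_{j'}) \geq 1 - d_{s-1}$. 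Since each final block has size at least $n$, it is at least a $1/k$ fraction of the block containing it at the moment of separation, and one loses at most a factor of $k$ in each coordinate, giving $d(X^i_j, X^{i'}_{j'}) \geq 1 - k^2 d_{s-1}$. This is where the $k^2$ in the statement comes from, and it arises once rather than iteratively.
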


\proof 
Initially we take the trivial $1$-row-decomposition of $G$ with one row whose blocks are the vertex classes $V_1, \dots, V_r$ of $G$. We now repeat the following step. Given an $s$-row-decomposition of $G$, if every row $G[X^i]$ is not $d_s$-splittable, then terminate. Alternatively, if $G[X^i]$ is $d_s$-splittable for some $i \in [s]$, according to some sets $S_j \sub X^i_j$, $j \in [r]$, then partition each block $X^i_j$ into two blocks $S_j$ and $X^i_j \sm S_j$ to obtain an $(s+1)$-row-decomposition of $G$.

Since $G[X^i]$ can only be $d_s$-splittable if $p_i \geq 2$, this process must terminate with $s \le k$. Then we have an $s$-row-decomposition of $G$ all of whose rows are not $d_s$-splittable, so it remains only to show that $G$ has minimum diagonal density at least $1-k^2d_{s-1}$. If $s=1$ then this is true by definition, so we may assume $s \geq 2$. Consider any rows $i \neq i'$ and columns $j \neq j'$. Since $X^i_j$ and $X^{i'}_{j'}$ do not lie in the same row of $G$, at some point in the process we must have partitioned blocks $Y^\ell_j$ and $Y^\ell_{j'}$ into $S_j$, $Y^\ell_j \sm S_j$ and $S_{j'}$, $Y^\ell_{j'} \sm S_{j'}$ with $X^i_j \sub S_j$ and $X^{i'}_{j'} \sub Y^\ell_{j'} \sm S_{j'}$ respectively. Since $G[Y^\ell]$ was $d_t$-splittable for some $t \le s-1$, we have $d(S_j, Y^\ell_{j'} \sm S_{j'}) \geq 1-d_{s-1}$. Then, since $|X^i_j| \geq |S_j|/k$ and $|X^{i'}_{j'}| \geq |Y^\ell_{j'}|/k$, we have $d(X^i_j, X^{i'}_{j'}) \geq 1-k^2d_{s-1}$, as required.
\endproof

\subsection{Avoiding space barriers} \label{sec:space}

Let $G$ be an $r$-partite graph whose vertex classes have size $pn$ with $\delta^*(G) \geq (p-1)n - \alpha n$, and let $J=J(G)$ be the clique $p$-complex of $G$. In this section we show that if $G$ is not $d$-splittable then there is no space barrier to a perfect matching in $J_p$. We shall use this result in combination with the results of the next section to find a perfect clique packing in each row. We also prove that if $p < r$ then $G$ contains many copies of $K_{p+1}$; this result will play an important role in the proof of Lemma~\ref{diagonalmindeg}.

\begin{lemma} \label{splittable}
Suppose that $1/n \ll \alpha \ll \beta \ll d \ll 1/r$ and $2 \le p \leq r$. Let $G$ be an $r$-partite graph on vertex classes $V_1, \dots, V_r$ each of size $pn$ with $\delta^*(G) \geq (p-1)n - \alpha n$. Suppose that $G$ is not $d$-splittable. Then
\begin{enumerate}[(i)]
\item for any $p' \in [p-1]$ and sets $S_i \sub V_i$, $i \in [r]$ of size $p'n$ there are at least $\beta n^p$ copies of $K_{p}$ in $G$ with more than $p'$ vertices in $G[S]$, where $S = \bigcup_{i \in [r]} S_i$, and
\item if $p<r$ then there are at least $\beta n^{p+1}$ copies of $K_{p+1}$ in $G$.
\end{enumerate}
\end{lemma}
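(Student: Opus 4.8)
The plan is to exploit the failure of $d$-splittability as a source of many ``diagonal'' non-edges, and then use a greedy/supersaturation argument to build the required cliques. First consider (i). Fix $p' \in [p-1]$ and sets $S_i \sub V_i$ of size $p'n$, and write $T_i := V_i \sm S_i$, so $|T_i| = (p-p')n$. Since $G$ is not $d$-splittable, these $S_i$ do not witness splittability, so there exist distinct columns $a,b \in [r]$ with $d(S_a, T_b) < 1-d$; that is, at least $d p' (p-p') n^2$ non-edges of $G$ lie between $S_a$ and $T_b$. I would then count copies of $K_p$ with exactly $p'+1$ vertices in $S$ as follows: pick such a ``bad'' non-edge $\{x,y\}$ with $x \in S_a$, $y \in T_b$, then greedily extend $\{x,y\}$ to a $p$-clique by choosing one further vertex in each of $p-2$ other columns, arranging that $p'-1$ of these new vertices land in the relevant $S_i$ and $p-p'-1$ land in the relevant $T_i$ (the column choice can be made so that enough $S$-columns and $T$-columns remain, using $p' \ge 1$ and $p-p' \ge 1$). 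At each of the $p-2$ extension steps the current clique has at most $p-1$ vertices, so by $\delta^*(G) \ge (p-1)n - \alpha n$ the common neighbourhood in the target column has size at least $n - \alpha n$ (since a column of size $pn$ and a clique of size $\le p-1$ forces many common neighbours), of which all but at most $(p'n \text{ or } (p-p')n)$ lie in the desired part $S_i$ or $T_i$; in fact since $|S_i|,|T_i| < pn$ a short computation shows at least $\Omega_r(n)$ choices remain in whichever of $S_i$, $T_i$ we want. Multiplying, we get at least (roughly) $d p'(p-p') n^2 \cdot (cn)^{p-2} \ge \beta n^p$ such cliques for suitable $c = c(r)$, after dividing by the $O(1)$ overcounting from the order of vertex selection; and such a clique has $p'+1 > p'$ vertices in $S$, as required. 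The main technical care is in the bookkeeping of column assignments and in checking that each greedy step really retains $\Omega(n)$ choices inside the prescribed block; the degree hypothesis $\delta^*(G) \ge (p-1)n - \alpha n$ with vertex classes of size $pn$ is exactly what makes this work, and $\alpha \ll \beta \ll d$ absorbs all the error terms.

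For (ii), assume $p < r$. I would argue that $G$ contains many copies of $K_{p+1}$ directly from the degree condition, without needing non-splittability at all: greedily build a clique one vertex at a time, choosing the columns $j_1 < \dots < j_{p+1}$ arbitrarily (possible since $r \ge p+1$). The first vertex has $pn$ choices; having chosen $i \le p$ vertices forming a clique, the common neighbourhood in column $j_{i+1}$ has size at least $n - \alpha n$ by $\delta^*(G) \ge (p-1)n - \alpha n$ — here the key point is that even the ``last'' step $i = p$ is fine because a $p$-clique still has only $p$ vertices and $\delta^*(G)$ controls the neighbourhood of each of its vertices, so an inclusion–exclusion / union bound over the $p$ vertices of the clique gives at least $pn - p\cdot(n + \alpha n)$... which is not obviously positive. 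The cleaner route is: a clique of size $i \le p$ has, in any column it misses, at least $pn - i(n/ (p-1)\cdot\text{...})$ — instead I would just use that each of the $i \le p$ vertices has at most $n + \alpha n$ non-neighbours... no. Let me instead use the standard trick: the number of non-neighbours of a single vertex $v$ in a fixed column is at most $pn - \delta^*(G) \le pn - (p-1)n + \alpha n = (1+\alpha)n$, so for a clique of size $i$ the number of vertices in a missed column adjacent to \emph{all} of them is at least $pn - i(1+\alpha)n$, which is $\ge (p - i - i\alpha)n \ge \Omega(n)$ only when $i \le p-1$. So this gives $K_p$ but not $K_{p+1}$ this way. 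Therefore for (ii) I would extend a fixed $K_p$ using part (i)-style structure, or rather: take any $K_p$ (there are $\gg n^p$ of them by the bound just obtained), look at its common neighbourhood in one further column $V_{j}$; each vertex of the clique kills at most $(1+\alpha)n$ vertices there, giving $\ge (1 - p\alpha)n \cdot$... still negative for $i=p$. The resolution is that we do \emph{not} need every $K_p$ to extend; we need \emph{some} positive fraction. So I would instead double count pairs (clique $K_{p-1}$, column pair $(j, j')$) and vertices $x \in V_j$, $y \in V_{j'}$ extending it: a $(p-1)$-clique has $\ge (1 - (p-1)\alpha/\,)n \ge n/2$ common neighbours in $V_j$ and similarly in $V_{j'}$, and for all but $o(n^2)$ pairs $(x,y)$ in these two neighbourhoods, $xy \in G$ (since most pairs between two $\Omega(n)$-sets are edges, as non-edges from any single vertex number $\le (1+\alpha)n$); this yields $\gg n^{p+1}$ copies of $K_{p+1}$. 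I expect the honest argument in the paper uses a clean supersaturation lemma; either way, part (ii) is genuinely easier than part (i), which is the main obstacle because of the delicate interplay between the splittability failure and the simultaneous control of how many clique vertices fall in $S$ versus $V \sm S$.
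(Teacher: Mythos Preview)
Your proposal for (i) contains a basic error: you select a \emph{non-edge} $\{x,y\}$ with $x\in S_a$, $y\in T_b$, and then try to ``greedily extend $\{x,y\}$ to a $p$-clique''. A non-adjacent pair cannot sit inside a clique. The paper's actual mechanism is the one you are missing: from $d(S_a,T_b)<1-d$ one extracts a set $A\subseteq S_a$ of $\Omega(dn)$ vertices each having $\Omega(dn)$ non-neighbours in $T_b$; since any vertex has at most $(1+\alpha)n$ non-neighbours in $V_b$ in total, each $v\in A$ has at most about $(1-d/3)n$ non-neighbours in $S_b$, hence $\Omega(dn)$ \emph{neighbours} in $S_b$. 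One then builds a $K_{p'+1}$ with all $p'+1$ vertices in $S$ (starting from $v\in A$, ending in $S_b$), and only afterwards extends freely to a $K_p$. Note also that your bookkeeping gives exactly $p'$ vertices in $S$ ($x$ plus $p'-1$ new ones), not more than $p'$; the correct target is $p'+1$ in $S$.

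Your assessment of (ii) is backwards: it is \emph{not} easier than (i), and it cannot be obtained from the degree condition alone. Your final double-counting attempt fails for the same reason your greedy attempt did: the common neighbourhood of a $(p-1)$-clique in each of two further columns has size only about $n$, while each vertex may have up to $(1+\alpha)n$ non-neighbours in the other column, so the two common neighbourhoods can be completely non-adjacent. Concretely, if each $V_i$ is partitioned into $p$ blocks of size $n$ and edges go between blocks with different superscripts, then $\delta^*(G)=(p-1)n$ but $G$ is $K_{p+1}$-free; this graph is of course $d$-splittable, which is why (ii) genuinely requires the non-splittability hypothesis. The paper proves (ii) by a structural contradiction: assuming few $K_{p+1}$'s, one shows almost every vertex has neighbourhood size close to $(p-1)n$ in every other class, then uses the neighbourhood of a fixed vertex $v\in V_1$ to define candidate sets $S_j\subseteq V_j$ for $j\ge 2$, applies part (i) to force the diagonal densities high among these, classifies $V_1$ accordingly, and concludes that $G$ is $d$-splittable.
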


\proof
For (i), since $G$ is not $d$-splittable, we may suppose that $d(S_1, V_{p'+1} \sm S_{p'+1}) < 1-d$. Let $A$ be the set of vertices in $S_1$ with fewer than $(1-d/2)(p-p')n$ neighbours in $V_{p'+1} \sm S_{p'+1}$. Write $|A|=ap'n$. Then $(1-d/2)(1-a) < d(S_1, V_{p'+1} \sm S_{p'+1}) < 1-d$, so $a>d/2$. We now greedily form a copy of $K_{p'+1}$ in $G[S]$ by choosing a vertex $v_i \in S_i$ for each $i \in [p' +1]$ in turn (in increasing order). We do this so that $v_1 \in A$ and $v_i \in N(v_j)$ for any $j < i$. There are $|A| \geq dn/2$ suitable choices for~$v_1$. For each $i \in \{2, \dots, p'\}$ we have chosen $i-1$ vertices prior to choosing $v_i$, so there are at least $|S_i| -  (i-1)(pn-\delta^*(G)) \geq p'n - (p'-1)(n+\alpha n) \geq (1-p\alpha)n$ suitable choices for $v_i$. Finally, since $v_1 \in A$ has at least $(p-p')nd/2 \geq nd/2$ non-neighbours in $V_{p'+1} \sm S_{p'+1}$, and at most $|V_{p'+1}| - \delta^*(G) \leq n + \alpha n$ non-neighbours in $V_{p'+1}$ in total, it has fewer than $(1-d/2+\alpha)n \le (1-d/3)n$ non-neighbours in $S_{p'+1}$. This means that there are at least $|S_{p'+1}| - (1-d/3)n - (p'-1)(pn-\delta^*(G)) \geq dn/4$ suitable choices for $v_{p'+1}$. Together we conclude that there are at least $(dn/4)(dn/2)((1-p\alpha) n)^{p'-1} \geq 2\beta n^{p'+1}$ copies of $K_{p'+1}$ in $G[S]$. Each such copy can be extended to a copy of $K_p$ in $G$ with more than $p'$ vertices in $S$ by choosing $v_i \in V_i$ for each $p'+2 \leq i \leq p$ in turn, so that each $v_i$ chosen is a neighbour of every $v_j$ with $j \leq i$. For each $p'+2 \leq i \leq p$ there are at least $pn - (i-1)(pn - \delta^*(G)) \geq pn - (p-1)(n+\alpha n) \geq (1-p\alpha)n$ suitable choices for $v_i$, so we obtain at least $2\beta n^{p'+1} ((1-p\alpha)n)^{p - p' - 1} \geq \beta n^p$ such copies of $K_p$.

For (ii), introduce new constants with $\beta \ll \gamma \ll \beta' \ll d_1 \ll d_2 \ll d$, and suppose for a contradiction that there are fewer than $\beta n^{p+1}$ copies of $K_{p+1}$ in $G$. Say that a vertex $x \in V(G)$ is \emph{bad} if it lies in at least $\sqrt{\beta} n^p$ copies of $K_{p+1}$ in $G$, and let $X$ be the set of all bad vertices. Then $\sqrt{\beta} n^p |X| \leq r\beta n^{p+1}$, so $|X| \leq r\sqrt{\beta} n$. We now show that for any $i \in [r]$, any vertex $v \in V_i \sm X$ has at most $(p-1)n + \gamma n$ neighbours in $V_{j}$ for any $j \neq i$. Without loss of generality we consider the case $i=1$, i.e.\ $v \in V_1 \sm X$. Suppose for a contradiction that $|N(v) \cap V_j| > (p-1)n + \gamma n$ for some $j$, say $j = p+1$. Then we may greedily form a copy of $K_{p+1}$ in $G$ containing $v$ by choosing $x_2, \dots, x_{p+1}$ with $x_i \in V_i$ for each $i$ so that each $x_i$ is a neighbour of $v, x_2, \dots, x_{i-1}$. We have at least $pn - (i-1)(pn - \delta^*(G)) \geq (p-i+1)n - (i-1)\alpha n \geq n/2$ choices for each $x_i$ with $i \in \{2, \dots, p\}$, and at least $|N(v) \cap V_{p+1}| - (p-1) (pn - \delta^*(G)) \geq (p-1)n + \gamma n - (p-1)(n + \alpha n) \geq \gamma n/2$ choices for $x_{p+1}$. Thus there are at least $\gamma n^p/2^p \geq \sqrt{\beta} n^p$ copies of $K_{p+1}$ in $G$ containing $v$, a contradiction to $v \notin X$.

Now we fix some $v \in V_1 \sm X$ and use the neighbourhood of $v$ to impose structure on the rest of the graph. We choose a set $S_j \subseteq V_j$ of size $(p-1)n$ which contains or is contained in $N(v) \cap V_j$ for each $j \geq 2$. If $d(S_i, V_j \sm S_j) < 1-d_1$ for some $i,j \geq 2$ with $i \neq j$, then as in part (i) we can find at least $2\beta' n^p$ copies of $K_p$ in $\bigcup_{i \geq 2} S_i$. At least $\beta' n^p$ of these are contained in $N(v)$, and so form copies of $K_{p+1}$ with $v$, another contradiction. So we may suppose that $d(S_i, V_j \sm S_j) \geq 1-d_1$ for any $i, j \geq 2$ with $i \neq j$. We now partition $V_1$ into sets $A, B, C$ as follows. Let $A$ consist of all vertices $u \in V_1$ with $|N(u) \cap (V_j \sm S_j)| \leq d_2n$ for every $2 \leq j \leq r$. Let $B$ consist of all vertices $u \in V_1$ with $|N(u) \cap (V_j \sm S_j)| \geq (1-d_2)n$ for every $2 \leq j \leq r$. Let $C = V_1 \sm (A \cup B)$ consist of all remaining vertices of $V_1$. Next we bound the sizes of each of these sets. By definition of $A$ we have $e(A, V_2 \sm S_2) \leq d_2 n |A|$, so some vertex in $V_2 \sm S_2$ has at most $d_2 |A|$ neighbours in $A$. So $pn - |A| + d_2|A| \geq \delta^*(G) \geq (p-1)n - \alpha n$, from which we obtain $|A| \leq (1+2d_2)n$. Next note that by definition of~$B$ we have $e(B, V_2 \sm S_2) \geq (1-d_2) n |B|$. So at least $n/2$ vertices of $V_2 \sm S_2$ have at least $(1-2d_2)|B|$ neighbours in $B$. At least one of these vertices is not bad, so by our earlier observation has at most $(p-1)n + \gamma n$ neighbours in $V_1$. Then $(1-2d_2)|B| \leq (p-1)n +\gamma n$, so $|B| \leq (1+3d_2)(p-1) n$. 

To bound $|C|$ we show that $C \sub X$. Consider any vertex $w \in C$. Without loss of generality $|N(w) \cap (V_{p+1} \sm S_{p+1})| > d_2n$ and $|N(w) \cap (V_p \sm S_p)| < (1-d_2)n$. Choose greedily a vertex $x_j \in S_j$ for each $2 \leq j \leq p$ so that $x_j$ is a neighbour of $w, x_2, \dots, x_{j-1}$ and satisfies $|N(x_j) \cap (V_{p+1} \sm S_{p+1})| \geq (1-\sqrt{d_1})n$. To see that this is possible for each $2 \leq j \leq p$, note that since $d(S_j, V_{p+1} \sm S_{p+1}) \geq 1-d_1$, at most $(p-1)\sqrt{d_1} n$ vertices $x_j \in S_j$ fail the latter condition. Note also that at least $|S_j| - |S_j \sm N(w)| - \sum_{2 \leq i < j} |S_j \sm N(x_i)| \geq (p-1)n - |S_j \sm N(w)| - (j-2) (n + \alpha n)$ vertices $x_j \in S_j$ satisfy the neighbourhood condition. For $j < p$ this gives at least $n/2$ suitable choices for $x_j$. On the other hand, for $j=p$ we have $|N(w) \cap (V_p \sm S_p)| < (1-d_2)n$, which implies that $|S_j \sm N(w)| \leq n - d_2 n + \alpha n$, so we have at least $d_2 n/2$ suitable choices for $x_j$. So we may form at least $d_2 (n/2)^{p-1}$ copies of $K_p$ containing $w$ in this manner. By construction, each $x_j$ in any such copy has at most $\sqrt{d_1}n$ non-neighbours in $V_{p+1} \sm S_{p+1}$. Since $w$ has at least $d_2 n$ neighbours in $V_{p+1} \sm S_{p+1}$, we find a total of at least $(d_2 n - p\sqrt{d_1}n) d_2 (n/2)^{p-1} \geq \sqrt{\beta} n^p$ copies of $K_{p+1}$ in $G$ containing $w$, so $w \in X$. We deduce that $C \subseteq X$, so $|C| \leq |X| \leq r\sqrt{\beta} n$. 

We therefore have $|B| \geq pn - |A| - |C| \geq (p-1)n - 3d_2 n$, so $|B| = (1 \pm 3d_2)(p-1) n$. Let $S_1$ be a set of size $(p-1) n$ which either contains or is contained in $B$. Then for any $2 \leq j \leq r$ we have $e(S_1, V_j \sm S_j) \geq \min\{|B|, |S_1|\} (1-d_2)n \geq (1-4d_2)(p-1)n^2$, so $d(S_1, V_j \sm S_j) \geq 1 - 4d_2$. 
Also, at most $4 d_2 (p-1) n $ vertices of $V_1 \sm S_1$ lie in $B \cup C$, so for any $2 \leq j \leq r$ we have $e(V_1 \sm S_1, V_j \sm S_j) \leq d_2n^2 + 4d_2(p-1) n^2 \leq 4pd_2n^2$. But $e(V_1 \sm S_1, V_j) \geq \delta^*(G)n \geq (p-1-\alpha)n^2$, so we obtain $e(V_1 \sm S_1, S_j) \geq (p-1-\alpha)n^2 - 4pd_2n^2$, and so $d(V_1 \sm S_1, S_j) \geq 1-9d_2$. Recall also that $d(S_i, V_j \sm S_j) \geq 1-d_1$ for any $i, j \geq 2$ with $i \neq j$. Since $d_1, d_2 \ll d$ we conclude that $G$ is $d$-splittable with respect to the sets $S_j$ for $j \in [r]$. This is a contradiction, so (ii) holds.
\endproof

\section{Avoiding divisibility barriers} \label{sec:div} 

Let $G$ be an $r$-partite graph with vertex classes of size $pn$ such that $\delta^*(G) \geq (p-1)n - \alpha n$, and let $J = J(G)$ be the clique $p$-complex of $G$. In the previous section we saw that if $G$ is not $d$-splittable (for small $d$), then there is no space barrier to a perfect matching in $J_p$. In this section we instead consider divisibility barriers. Indeed, we shall see in the second subsection that if $p \geq 3$ and $G$ is not $d$-splittable, then $J_p$ cannot be close to a divisibility barrier. However, for $p=2$ there is another possibility, namely that $G$ has the structure of $V^1 \cup V^2$ in Construction \ref{fischereg}. There we described both $V^1$ and $V^2$ as rows, but with the terminology of the previous section they should be considered as a single row. We consider this case in the first subsection. Note that here we have $J_p = J_2 = G$.

\subsection{Pair-complete rows} \label{subsec:paircomplete}
Let $G$ be an $r$-partite graph with vertex classes $V_1,\dots,V_r$ each of size $2n$. We say that $G$ is \emph{$d$-pair-complete} (with respect to $S = \bigcup_{j \in [r]} S_j$) if there exist sets $S_j \sub V_j$, $j \in [r]$ each of size $n$ such that $d(S_i, S_j) \ge 1-d$, $d(V_i \sm S_i, V_j \sm S_j) \geq 1-d$ and $d(S_i, V_j \sm S_j) \le d$ for any $i,j \in [r]$ with $i \neq j$. That is, $G$ consists of two \emph{halves} $S$ and $V \sm S$, where each half is an almost-complete $r$-partite graph, and there are few edges between halves. We will show that if $G$ is close to a divisibility barrier, then $G$ is either $d$-splittable or $d$-pair-complete. For this we need the following proposition.

\begin{prop} \label{auxgraph}
Let $r \geq 2$ and $H$ be an $r$-partite graph whose parts $V_1,\dots,V_r$ each have size $2$. Suppose that $\delta^*(H) \geq 1$, and for any $A \sub V$ such that $|A \cap V_j| = 1$ for every $j \in [r]$ there is
\begin{itemize}
\item[(i)] an edge $ab$ with $a,b \in A$ or $a,b \notin A$, and
\item[(ii)] an edge $ab$ with $a \in A$ and $b \notin A$.
\end{itemize}
Then for some $V_j = \{x,y\}$ there is a path of even length between $x$ and $y$.
\end{prop}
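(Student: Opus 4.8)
The plan is to work with the integer lattice $L := \langle\{\chi(e) : e \in H\}\rangle \le \Z^V$ and its quotient group $Q := \Z^V/L$; writing $\bar u$ for the image of $\chi(\{u\})$ in $Q$, we have $\bar a + \bar b = 0$ whenever $ab\in H$, and the statement to be proved is that $\bar x = \bar y$ for some part $V_j = \{x,y\}$. I split the argument according to how many edges of $H$ run between a pair of parts. Since $\delta^*(H)\ge1$, any two parts have at least two edges between them, and when they have exactly two these form a perfect matching. The easy case is that some two parts $V_i,V_j$ have at least three edges between them: then some $a\in V_i$ is joined to both vertices $x,y$ of $V_j$, so $\chi(\{a,x\}),\chi(\{a,y\})\in L$, hence $\chi(\{x\})-\chi(\{y\})\in L$ and we are done. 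So in the remaining case every pair of parts spans exactly a perfect matching, and I aim either to reach the conclusion or to contradict hypothesis (i) or (ii).

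In this ``all-matchings'' case each vertex has a unique neighbour in every other part, so each connected component of $H$ is closed under taking matching partners. I would first show $H$ is connected: if a component $C$ contained both vertices of some part, then following the matchings from that part to every other part would force $C = V$; hence if $H$ is disconnected, every component meets each part (using $\delta^*(H)\ge1$) in exactly one vertex, and the matching edge between the two relevant vertices of a component lies inside it, so each component is a clique $K_r$. Then $H = K_r \sqcup K_r$, and taking $A$ to be one of the two components---a transversal with no edge to its complement---contradicts (ii). So $H$ is connected.

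Now I analyse $Q$. Each generator $\chi(e)$ has coordinate sum $2$, so $L$ lies in the proper sublattice of vectors of even coordinate sum and therefore $Q \ne 0$; since $\bar a = -\bar b$ across edges, connectedness forces every $\bar u$ to equal $\pm q$ for a single $q$, and $q \ne 0$. If $H$ is not bipartite, an odd closed walk gives $2q = 0$, so in fact $\bar u = q$ for all $u$, whence $\bar x_j = \bar y_j$ for every $j$ and we are done. If $H$ is bipartite with classes $B_1, B_2$, then $\bar u = q$ on $B_1$ and $\bar u = -q$ on $B_2$; if both $B_1$ and $B_2$ were transversals, then $A := B_1$ would be a transversal with $A$ and $V \sm A$ both independent, contradicting (i); hence some part has both of its vertices in one class, and $\bar x_j = \bar y_j$ for that part, as required.

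I expect the main difficulty to be pinning down the structure of $Q = \Z^V/L$ cleanly---that a connected non-bipartite $H$ collapses all vertex images to a single nonzero element (so $Q \cong \Z_2$), while a connected bipartite $H$ gives $Q \cong \Z$ with the images read off from the $2$-colouring---and, on the combinatorial side, verifying carefully that a disconnected all-matchings $H$ is exactly $K_r \sqcup K_r$ (which is where ``components are closed under the matchings'' is used) and that a proper $2$-colouring of $H$ into two transversals is precisely a transversal $A$ witnessing the failure of hypothesis (i).
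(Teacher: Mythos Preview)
Your argument is correct, and it takes a genuinely different route from the paper's proof. Both proofs reduce quickly to the ``all-matchings'' situation where every pair of parts induces a perfect matching (the paper phrases this as ``no even path between the two vertices of a part implies each vertex has a unique neighbour in each other part''). From there the paper proceeds by explicit path-chasing: it labels the vertices $x_1,\dots,x_r,y_1,\dots,y_r$ along the matchings, splits into two cases according to how the matching between $V_r$ and $V_1$ closes up, and in each case applies (i) or (ii) to a carefully chosen transversal to force a concrete even path, deriving parity contradictions by hand. Your approach instead passes to the quotient $Q = \Z^V/L$ and observes that the relations $\bar a + \bar b = 0$ for edges $ab$ reduce the question to the connectedness and bipartiteness of $H$: disconnectedness gives $H = K_r \sqcup K_r$ and a transversal violating (ii), while a bipartition into two transversals violates (i), and otherwise some part has both vertices in the same colour class (or $H$ is non-bipartite and $2q=0$), giving the conclusion. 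Your argument is cleaner and more conceptual, and makes transparent why exactly hypotheses (i) and (ii) are the right ones; the paper's argument is more elementary in that it never names the quotient group and produces an explicit even walk, but at the cost of a somewhat fiddly case analysis.
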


\proof 
Suppose for a contradiction that there is no such path. Then for any $i, j \in [r]$ with $i \neq j$, any vertex $v \in V_i$ must have precisely one neighbour in $V_j$. Indeed, $v$ must have at least one neighbour in $V_j$ since $\delta^*(H) \geq 1$, but cannot have two since then we obtain a path of length two between these two neighbours in $V_j$. So for any $i \neq j$ the graph $H[V_i, V_j]$ consists of two disjoint edges. 
Write $V_1 = \{x_1,y_1\}$, and for each $2 \le i \le r$ let $x_i \in V_i$ be adjacent to $x_{i-1}$ and $y_i \in V_i$ be adjacent to $y_{i-1}$. 
There are then two possibilities for $H[V_r, V_1]$.

The first case is that $x_r$ is adjacent to $x_1$ and $y_r$ to $y_1$. By property (ii) of $A = \{x_1,\dots,x_r\}$, there must be some edge $x_iy_j$. Fix such an $i$ and $j$, and consider the paths $x_1\dots x_i y_j \dots y_1$ and $x_1\dots x_iy_j \dots y_ry_1$ between $x_1$ and $y_1$. They have lengths $i+j-1$ and $i+r-j+1$, which must both be odd, so $i+j$ and $r$ are both even. This argument shows that $i'+j'$ must be even for any edge $x_{i'}y_{j'}$. So by property (i) of $A = \{x_{i'}: i' \text{ even}\} \cup \{y_{i'}: i' \text{ odd}\}$ there must be an edge $x_{i'}x_{j'}$ or $y_{i'}y_{j'}$ such that $i'+j'$ is even. Without loss of generality we may assume the former, and that $j' \neq i$. If $i' = i$ then $x_{j'}x_iy_jy_{j+1}\dots y_{j'}$ is a path whose length is congruent to $j' - j + 2 \equiv j' + i' - j - i + 2 \equiv 0$ modulo $2$, giving a contradiction. On the other hand, if $i' \neq i$ then let $P$ be a path from $x_{j'}$ to $x_{i}$ which does not contain $x_{i'}$ and whose length has the same parity as $i-j'$  (this must exist since $r$ is even and $x_i, x_{i'}$ and $x_{j'}$ all lie on the cycle $x_1x_2\dots x_rx_1$). Then $x_{i'}x_{j'}Px_iy_{j}y_{j+1}\dots y_{i'}$ is a path whose length is congruent to $i - j' + i' - j + 2 \equiv 0$ modulo $2$, again giving a contradiction.

The second case is that $x_r$ is adjacent to $y_1$ and $y_r$ to $x_1$. Then $x_1 \dots x_ry_1$ must have odd length, so $r$ is odd. By property (i) of $A = \{x_i: i \text{ even}\} \cup \{y_i: i \text{ odd}\}$ we must have either an edge $x_iy_j$ with $i+j$ odd, or an edge $x_ix_j$ with $i+j$ even, or an edge $y_iy_j$ with $i+j$ even. In the first case $x_1\dots x_i y_j \dots y_1$ is a path of even length $i+j-1$. On the other hand, for the second case we may assume that $i < j$, whereupon $x_1 \dots x_i x_j \dots x_r y_1$ is a path of even length $r - j + i + 1$, and the third case is similar. Thus we have a contradiction in all cases, so the required path exists.
\endproof

Now we can deduce the required structure for divisibility barriers in $G$ when $p=2$.

\begin{lemma} \label{2splitorpc}
Suppose that $1/n \ll  \mu, \alpha \ll d \ll 1/r$ and $r \geq 2$. Let $\Part'$ partition a set $V$ into parts $V_1, \dots, V_r$ each of size $2n$. Suppose $G$ is a $\Part'$-partite graph with $\delta^*(G) \geq n - \alpha n$, and that there exists a partition $\Part$ refining $\Part'$ into parts each of size at least $n - \mu n$ such that $L^\mu_\Part(G)$ is incomplete with respect to $\Part'$. Then $G$ is $d$-splittable or $d$-pair-complete.
\end{lemma}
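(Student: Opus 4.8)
Since $\Part$ refines $\Part'$ the graph $G$ is $\Part$-partite, so every edge of $G$ meets each part of $\Part$ at most once. Write $L:=L^\mu_\Part(G)$; it is generated by the vectors $\chi(P)+\chi(Q)$ over pairs of parts $P,Q$ of $\Part$ joined by at least $\mu n^2$ edges of $G$, and it is complete with respect to $\Part'$ exactly when it contains $\chi(P)-\chi(Q)$ for every two parts $P,Q$ of $\Part$ lying in a common $V_\ell$. First I would pin down the shape of $\Part$: since each part has size at least $n-\mu n$ while $|V_\ell|=2n$, no $V_\ell$ is split into three or more parts. If some $V_\ell$ were a single part of $\Part$, then using incompleteness pick a part $V_m$ split into $A_m,B_m$ with $\chi(A_m)-\chi(B_m)\notin L$; since $\delta^*(G)\ge n-\alpha n$ forces at least $\mu n^2$ edges of $G$ from $V_\ell$ to each of $A_m,B_m$, both $\chi(V_\ell)+\chi(A_m)$ and $\chi(V_\ell)+\chi(B_m)$ lie in $L$, and subtracting gives the contradiction $\chi(A_m)-\chi(B_m)\in L$. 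Hence every $V_\ell$ is split into exactly two parts $A_\ell,B_\ell$, each of size $(1\pm\mu)n$. Let $H$ be the $r$-partite graph on vertex set $\{A_\ell,B_\ell:\ell\in[r]\}$ with parts $\{A_\ell,B_\ell\}$ and an edge between two parts of $\Part$ whenever at least $\mu n^2$ edges of $G$ join them; then $L=\langle\chi(e):e\in H\rangle$ and, again from $\delta^*(G)\ge n-\alpha n$, $\delta^*(H)\ge 1$.

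The second step is to show that incompleteness propagates. For $i\ne j$ the bipartite graph induced by $H$ on $\{A_i,B_i\}\cup\{A_j,B_j\}$ has minimum degree at least $1$, hence is $K_{2,2}$, a path $P_4$, or a perfect matching; a short case check shows that in every case $\chi(A_i)-\chi(B_i)\in L$ forces $\chi(A_j)-\chi(B_j)\in L$, and that both $K_{2,2}$ and $P_4$ already put $\chi(A_j)-\chi(B_j)$ in $L$. Consequently, if $\chi(A_\ell)-\chi(B_\ell)\in L$ for even one $\ell$ it holds for all of them, i.e.\ $L$ is complete, contrary to hypothesis. So $\chi(A_\ell)-\chi(B_\ell)\notin L$ for \emph{every} $\ell$, and in particular no pair induces $K_{2,2}$ or $P_4$, so every pair of parts induces a perfect matching in $H$.

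Finally I would invoke Proposition~\ref{auxgraph}. Its conclusion fails for $H$ while $\delta^*(H)\ge 1$, so some transversal $A^*=\{C_\ell:\ell\in[r]\}$ (with $C_\ell\in\{A_\ell,B_\ell\}$; write $C'_\ell$ for the other vertex of the $\ell$-th part) violates (i) or (ii). If (ii) fails there is no edge of $H$ between $A^*$ and its complement, so each perfect matching between $\{C_i,C'_i\}$ and $\{C_j,C'_j\}$ is $\{C_iC_j,\,C'_iC'_j\}$; thus $e_G(C_i,C'_j)<\mu n^2$ for all $i\ne j$, and combining with $\delta^*(G)\ge n-\alpha n$ and $|C_i|,|C'_i|=(1\pm\mu)n$ gives $d(C_i,C_j),d(C'_i,C'_j)\ge 1-O(\mu+\alpha)$ and $d(C_i,C'_j)=O(\mu)$. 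Trimming each $C_\ell$ to a set $S_\ell\sub V_\ell$ of size exactly $n$ changes at most $\mu n$ vertices, hence each of these densities by $O(\mu)$; as $\mu,\alpha\ll d$ this shows $G$ is $d$-pair-complete with respect to $\bigcup_\ell S_\ell$. Symmetrically, if (i) fails then every edge of $H$ crosses the split, each matching is $\{C_iC'_j,\,C'_iC_j\}$, so $d(C_i,C'_j)\ge 1-O(\mu+\alpha)$ for all $i\ne j$, and trimming the $C_\ell$ to sets $S_\ell$ of size $n$ exhibits $G$ as $d$-splittable (with $p'=1$).

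I expect the main obstacle to be the second step: the hypothesis only gives that $L$ is incomplete, whereas a clean failure of Proposition~\ref{auxgraph}'s hypotheses needs $\chi(A_\ell)-\chi(B_\ell)\notin L$ for \emph{all} $\ell$ and needs every pair of parts to behave like a perfect matching in $H$; both are extracted by the propagation argument. The density bookkeeping in the last step, including the size adjustments, is routine once $\mu,\alpha\ll d$.
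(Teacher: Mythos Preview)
Your proof is correct and follows essentially the same route as the paper: reduce to a two-part split of each $V_\ell$, build the auxiliary graph $H$, show the conclusion of Proposition~\ref{auxgraph} fails, and read off $d$-splittable or $d$-pair-complete from the violated hypothesis (i) or (ii). The one genuine difference is in how you obtain $\chi(A_\ell)-\chi(B_\ell)\notin L$ for \emph{all} $\ell$: the paper first replaces $\Part$ by a minimal refinement (merging parts whenever the corresponding difference lies in $L$), so that this holds by definition; you instead keep $\Part$ fixed and prove a propagation lemma by case analysis on $H[\{A_i,B_i\},\{A_j,B_j\}]$. Your route is self-contained and avoids checking that the merged lattice stays incomplete, at the cost of the small case analysis; the paper's route is cleaner once minimality is set up as a general tool. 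Note that the robust-lattice threshold in the paper is $\mu|V|^2=\mu(2rn)^2$, not $\mu n^2$, but since $\mu\ll 1/r$ this only affects constants and your edge-count estimates go through unchanged.
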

 
\proof
We can assume that $L^\mu_\Part(G)$ is transferral-free (recall from Section~\ref{sec:theory} that this means that $L^\mu_\Part(G)$ does not contain any difference of basis vectors $\unit_i-\unit_j$, for some $i \neq j$ which index subparts of the same part of $\Part'$). Thus for any $i,j \in [r]$ with $i \ne j$, distinct parts $A$, $B$ of $\Part$ contained in $V_i$, and part $C$ of $\Part$ contained in $V_j$, we cannot have both $e(A,C) \ge \mu (2rn)^2$ and $e(B,C) \ge \mu (2rn)^2$. Since all parts of $\Part$ have size at least $n - \mu n$, each part of $\Part'$ is partitioned into at most two parts of $\Part$. We can assume that $\Part$ is a strict refinement of $\Part'$, so without loss of generality $\Part$ partitions $V_1$ into two parts $V_1^1$ and $V_1^2$. Next we note that there cannot be any part $V_j$ that is not partitioned into two parts by $\Part$. For otherwise, letting $A = V^1_1$, $B = V^2_1$ and $C=V_j$, we have $e(A,C) \ge |A| \delta^*(G) \ge \mu (2rn)^2$ and $e(B,C) \ge |B| \delta^*(G) \ge \mu (2rn)^2$, which contradicts $L^\mu_\Part(G)$ being transferral-free, as described above. Thus each $V_j$ is partitioned into two parts $V^1_j$ and $V^2_j$ by $\Part$. Form an auxiliary graph $H$ on $2r$ vertices, where for each $i \in \{1,2\}$ and $j \in [r]$ we have a vertex $x^i_j$ of $H$ corresponding to the part $V^i_j$ of $\Part$, and we have an edge $x^i_jx^{i'}_{j'}$ if and only if $e(V^i_j, V^{i'}_{j'}) \ge \mu (2rn)^2$. Since $\delta^*(G) \geq n - \alpha n$ and each part of $\Part$ has size at least $n - \mu n$, we have $\delta^*(H) \geq 1$. 
Furthermore, for any edge $ab$ of $H$, the at least $\mu (2rn)^2$ edges of $G$ corresponding to this edge ensure that $\ub_a + \ub_b \in L_\Part^\mu(G)$, where we consider $a$ and $b$ to index the parts of $\Part$ to which they correspond. It follows that $H$ cannot contain a path of even length between $x^1_j$ and $x^2_j$ for any $j \in [r]$. Indeed, if $x^1_j = y_0, y_1, \dots, y_{2m} = x^2_j$ are the vertices of such a path then, since $\ub_{y_{\ell-1}} + \ub_{y_{\ell}} \in L_\Part^\mu(G)$ for each $\ell \in [2m]$, we have $$\ub_{x^1_j} - \ub_{x^2_j} = \ub_{y_0} - \ub_{y_{2m}} = \sum_{\ell \in [m]} \big((\ub_{y_{2\ell-2}} +  \ub_{y_{2\ell-1}}) - (\ub_{y_{2\ell-1}} + \ub_{y_{2\ell}})\big) \in L_\Part^\mu(G),$$
a contradiction to $L^\mu_\Part(G)$ being transferral-free.

We may therefore apply Proposition~\ref{auxgraph} to deduce that there exists some $A \subseteq V(H)$ with $|A \cap \{x^1_j, x^2_j\}| = 1$ for each $j \in [r]$ such that either 
\begin{itemize}
\item[(i)] $H$ contains no edges $ab$ with $a,b \in A$ or $a,b \notin A$, or
\item[(ii)] $H$ contains no edges $ab$ with $a \in A$ and $b \notin A$.
\end{itemize}
For each $j \in [r]$ let $S'_j = V^1_j$ if $x^1_j \in A$, and $V^2_j$ otherwise. For $j \in [r]$, let $S_j \sub V_j$ be a set of size $n$ that contains or is contained by $S'_j$. Note that for any $j \neq j'$, we have $e(S_j, V_{j'}) \geq n\delta^*(G) \geq (1-\alpha)n^2$, and similarly $e(V_j \sm S_j, V_{j'}) \geq (1-\alpha)n^2$. In case (i), for any $j \neq j'$ we have $e(S'_j, S'_{j'}) \leq \mu (2rn)^2$. This implies $e(S_j, V_{j'} \sm S_{j'}) \geq (1-\alpha - 8r^2\mu) n^2$, and so $G[S_j, V_{j'} \sm S_{j'}]$ has density at least $1-d$. Since $j$ and $j'$ were arbitrary, we may conclude that $G$ is $d$-splittable. On the other hand, in case (ii), for any $j \neq j'$ we have $e(S'_j, V_{j'} \sm S'_{j'}) \leq \mu (2rn)^2$. This implies $e(S_j, S_{j'}) \geq (1-\alpha - 8r^2\mu) n^2$, and so $G[S_j, S_{j'} ]$ has density at least $1-d$. Similarly, $G[V_j \sm S_j, V_{j'} \sm S_{j'}]$ has density at least $1-d$, and $G[S_j, V_{j'} \sm S_{j'}]$ has density at most $d$ for any $j \neq j'$, so $G$ is $d$-pair-complete.
\endproof

\subsection{Avoiding divisibility barriers for $p > 2$.} \label{subsec:div}
We next show that for $p > 2$, if $G$ is not $d$-splittable then $J_p$ is not close to a divisibility barrier.

\begin{lemma} \label{split}
Suppose that $1/n \ll  \mu, \alpha \ll d \ll 1/r$ and $3 \le p \le r$. Let $\Part'$ partition a set $V$ into vertex classes $V_1, \dots, V_r$ each of size $pn$. Suppose $G$ is a $\Part'$-partite graph with $\delta^*(G) \geq (p-1)n - \alpha n$ and let $J =J(G)$ be the clique $p$-complex of $G$. Suppose $\Part$ is a partition refining $\Part'$ into parts each of size at least $n - \mu n$ such that $L^\mu_\Part(J_p)$ is incomplete with respect to $\Part'$. Then $G$ is $d$-splittable.
\end{lemma}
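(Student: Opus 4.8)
The plan is to mimic the structure of the proof of Lemma~\ref{splittable}(ii): we use the incompleteness of $L^\mu_\Part(J_p)$ to produce structural information, and then leverage the minimum degree condition together with $p \geq 3$ to deduce $d$-splittability. As in Lemma~\ref{2splitorpc}, we first reduce to the case that $L^\mu_\Part(J_p)$ is \emph{minimal}, so that $\Part$ does not merge any difference of basis vectors indexing subparts of the same part of $\Part'$. Since $\delta^*_{p-1}(J) \geq n - \alpha n$ and all parts of $\Part$ have size at least $n - \mu n$, and each $V_i$ has size $pn$, each $V_i$ is partitioned into at most $p$ (in fact exactly the same number, close to $p$) parts of $\Part$ — here the arithmetic is looser than in the $p=2$ case, so we should not expect each $V_i$ to split into a fixed number of parts, only that the parts are `large' (size roughly $n$). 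Without loss of generality $\Part$ strictly refines $\Part'$, say it partitions $V_1$ into parts $W_1, W_2, \dots$ with more than one part.

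The key idea: minimality tells us that for any $i \neq j$, any two distinct parts $A, B$ of $\Part$ inside $V_i$, and any part $C$ of $\Part$ inside $V_j$, we \emph{cannot} have $p-1$ parts of $\Part$, one from each of $p-1$ distinct columns including $C$, all of whose pairwise `robust' densities are high together with $A$, \emph{and} the same with $B$ in place of $A$; otherwise we could greedily build $\mu n^p$ copies of $K_p$ whose index vector records the $A$-part versus the $B$-part, forcing $\unit_A - \unit_B \in L^\mu_\Part(J_p)$. More concretely: I want to show that if $G$ is \emph{not} $d$-splittable, then for any two parts $A, B$ of $\Part$ contained in the same $V_i$ and any part $C$ of $\Part$ in another column, one can find copies of $K_p$ using $C$, a vertex from $A$, and $p-2$ further vertices in $p-2$ other columns, and \emph{also} copies using $B$ instead of $A$; counting these gives $\geq \mu n^p$ edges of $J_p$ of each of two index vectors differing by $\unit_A - \unit_B$, contradicting minimality (which says this difference is not in the lattice). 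To build such cliques greedily I need that the bipartite graph between $A$ (resp.\ $B$) and $C$, and between the other chosen parts, has density bounded away from $0$ — this is where non-$d$-splittability enters: if all these diagonal densities were large we would win, and the point is that non-splittability rules out the bad configuration of densities. This is essentially the same greedy extension argument as in Lemma~\ref{splittable}(i)–(ii).

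The main obstacle I expect is the bookkeeping around how the columns are partitioned by $\Part$ and ensuring the greedy clique-building always has enough room. Unlike the $p=2$ case, where each column splits into exactly two parts and one builds a clean auxiliary graph $H$ on $2r$ vertices, here each column may split into up to $\sim p$ parts of varying sizes, and a single part of $\Part$ could be as small as $n - \mu n$ while another in the same column absorbs the remaining $(p-1)n + \mu n$ vertices. So the auxiliary-graph argument of Proposition~\ref{auxgraph} does not transfer directly; instead I expect the proof to proceed by the density/greedy-extension route of Lemma~\ref{splittable}: assume $G$ is not $d$-splittable, use this to establish that `most' diagonal densities between parts of $\Part$ are $\geq 1 - d_1$ (introducing an auxiliary small constant $d_1 \ll d$), and then show that wherever minimality forbids $\unit_A - \unit_B \in L^\mu_\Part(J_p)$ we can nonetheless build $\geq \mu n^p$ copies of $K_p$ of the two relevant indices using only high-density pairs — contradiction. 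The delicate point is that one of the $p-2$ "filler" columns might have no large-density part available relative to both $A$ and $B$ simultaneously; handling this will likely require the observation that $p \geq 3$ gives at least one filler column whose partition by $\Part$ can be chosen to interact well with $C$, together with a bound (as in Lemma~\ref{splittable}) on how many vertices can fail the relevant neighbourhood conditions. I would expect the proof to conclude, as in the cited lemmas, by exhibiting explicit sets $S_i \subseteq V_i$ of size $(p-1)n$ witnessing $d$-splittability, derived from the parts of $\Part$ that survive this analysis.
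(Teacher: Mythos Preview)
Your high-level ingredients (reduce to minimal $L^\mu_\Part(J_p)$, greedy clique-building, leverage $\delta^*(G)$) are right, but the specific line of attack has a real gap. You propose: assume $G$ is not $d$-splittable, then for any two parts $A,B \subseteq V_i$ of $\Part$ and any part $C \subseteq V_j$, build $\geq \mu n^p$ copies of $K_p$ through $A,C$ and through $B,C$ with the \emph{same} parts in the remaining $p-2$ columns, contradicting minimality. But non-splittability is a statement about sets of size $p'n$ for integer $p'$, whereas the parts of $\Part$ have a priori arbitrary sizes in $[n-\mu n, pn]$, and there is no mechanism in your sketch that converts ``not $d$-splittable'' into the existence of $K_p$'s with \emph{prescribed $\Part$-indices}. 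The appeal to Lemma~\ref{splittable}(i) does not help here: that lemma counts $K_p$'s by how many vertices fall in a single set $S$, not by their full index vector with respect to a refinement $\Part$.

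The paper's route is essentially the reverse of yours and sidesteps this difficulty. Rather than trying to contradict minimality, it \emph{uses} minimality of $L^\mu_\Part(J_p)$ to derive near-regularity constraints on neighbourhoods (e.g.\ most pairs $x \in V_1^i$, $y \in V_1^{i'}$ with $i \neq i'$ have $|N(x)\cap N(y)\cap V_j| < (p-2)n + \mu' n$). From the partition of $V_1$ \emph{alone} it then defines sets $X_j^i := \{v \in V_j : |N(v)\cap V_1^i| \le |V_1^i| - \gamma n\}$ in every other column, and shows these $X_j^i$ have approximately the right sizes and pairwise diagonal densities $\geq 1-d/2$. A final, genuinely delicate step (which your sketch does not anticipate) is to prove that each $|V_1^i|$ is within $o(n)$ of an \emph{integer} multiple of $n$; without this one cannot round the $X_j^i$ to sets of size exactly $p_i n$ to witness splittability. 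This step requires a separate equivalence-relation argument on $V_1$ based on intersections of non-neighbourhoods, and is where the assumption $p \geq 3$ is used in an essential way.
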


\proof
We introduce new constants with $\mu, \alpha \ll \mu' \ll c \ll \gamma \ll \gamma' \ll \gamma'' \ll d$. We can assume that $\Part$ is a strict refinement of $\Part'$, so without loss of generality $\Part$ partitions $V_1$ into parts $V^i_1$, $i \in [m]$ with $2 \le m \le p$. As in the proof of Lemma \ref{2splitorpc}, we can also assume that $L^\mu_\Part(J_p)$ is transferral-free, meaning that it does not contain any difference of basis vectors $\unit_i-\unit_j$, for some $i \neq j$ which index subparts of the same part of $\Part'$. Thus for any $p$ vertex classes $V_{i_1},\dots,V_{i_p}$, distinct parts $U_{i_1}$, $U'_{i_1}$ of $\Part$ contained in $V_{i_1}$, and parts $U_{i_j} \sub V_{i_j}$ for $2 \le j \le p$, it cannot be that $\bigcup_{j=1}^p U_{i_j}$ and $U'_1 \cup \bigcup_{j=2}^p U_{i_j}$ both have at least $\mu |V(G)|^p = \mu (rpn)^p$ edges of $J_p$ (i.e.\ copies of $K_p$). We use this to deduce the following properties, which control the typical behaviour of neighbourhoods and certain pairwise intersections of neighbourhoods. Note that the bound in (c) is close to the lower bound on $\delta^*(G)$, so it says that $G$ is mostly approximately regular from the point of view of $V_1$.

\begin{claim} \label{splitc1} \
\begin{itemize}
\item[(a)] There are at most $\mu' n^2$ pairs $(x,y)$ for which there exist $i \in [m]$ and $j \in [r]$ such that $x \in V_1^i$, $y \in V_1 \sm V_1^i$ and $|V_j \cap N(x) \cap N(y)| \geq (p-2) n + \mu' n$.
\item[(b)] There are at most $\mu' n^3$ triples $(x,y,z)$ for which there exist $i \in [m]$ and $j \in [r]$ such that $x \in V_1^i$, $y \in V_1 \sm V_1^i$, $z \in V \sm V_1$, $xz, yz \in G$ and $|V_j \cap N(x) \cap N(z)| \geq (p-2) n + \mu' n$.
\item[(c)] For any $2 \leq j \leq r$ there are at most $2\mu' n$ vertices $x \in V_1$ such that $|N(x) \cap V_{j}| \geq (p-1)n + 2\mu' n$.
\end{itemize}
\end{claim}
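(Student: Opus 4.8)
The plan is to prove (a) and (b) directly by contradiction, using the minimality of $L^\mu_\Part(J_p)$ recalled just before the claim, and then to deduce (c) from (a). In all cases the goal is to exhibit, for two distinct parts $A=V^i_1$, $B=V^{i'}_1$ of $\Part$ inside $V_1$ and a single vector $\vb$, at least $\mu(rpn)^p$ copies of $K_p$ of index $\unit_A+\vb$ and at least $\mu(rpn)^p$ of index $\unit_B+\vb$; then $\unit_A-\unit_B=(\unit_A+\vb)-(\unit_B+\vb)\in L^\mu_\Part(J_p)$, which is forbidden since $A$ and $B$ lie in the same part $V_1$ of $\Part'$.

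For (a), I would suppose there are more than $\mu'n^2$ pairs of the stated kind. Since $G$ is $\Part'$-partite the index $j$ can never be $1$, so averaging over the at most $p^2$ ordered pairs of distinct parts of $\Part$ inside $V_1$ and the at most $r$ choices of $j$ produces fixed distinct parts $A=V^i_1$, $B=V^{i'}_1$ of $\Part$ inside $V_1$, an index $j\ne 1$, and a set $P$ of more than $\mu'n^2/(p^2r)$ pairs $(x,y)\in A\times B$ with $|V_j\cap N(x)\cap N(y)|\ge(p-2)n+\mu'n$. I fix further classes $V_{m_1},\dots,V_{m_{p-2}}$ distinct from $V_1$ and $V_j$ (possible since $r\ge p$). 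The crucial step is, for each $(x,y)\in P$, to build at least $c\mu'n^{p-1}$ copies $W$ of $K_{p-1}$ (with $c=c(p)>0$) lying \emph{entirely} inside $N(x)\cap N(y)$, one vertex in each of $V_{m_1},\dots,V_{m_{p-2}},V_j$: I choose the vertices in $V_{m_1},\dots,V_{m_{p-2}}$ first, where $\delta^*(G)\ge(p-1)n-\alpha n$ leaves at least $pn-(t+1)(n+\alpha n)\ge n/2$ choices at step $t$, and I choose the vertex in $V_j$ \emph{last}, so that starting from the surplus set $N(x)\cap N(y)\cap V_j$ of size $\ge(p-2)n+\mu'n$ and deleting the non-neighbourhoods of the $p-2$ already-chosen vertices still leaves at least $\mu'n-(p-2)\alpha n\ge\mu'n/2$ choices (using $\alpha\ll\mu'$). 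Now $\{x\}\cup W$ and $\{y\}\cup W$ are copies of $K_p$ of index $\unit_A+\ib(W)$ and $\unit_B+\ib(W)$. Pigeonholing over the at most $p^{p-1}$ possible values of $\ib(W)$, and using that each copy of $K_p$ obtained this way arises from at most $|B|\le pn$ pairs, I would get a single $\vb$ for which $\unit_A+\vb$ and $\unit_B+\vb$ are each the index of at least $\mu(rpn)^p$ copies of $K_p$; the hierarchies $\mu\ll\mu'$ and $\mu\ll 1/r$ absorb the constants depending on $p$ and $r$. This contradicts minimality.

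Part (b) follows the same template, now swapping $x$ and $y$ in $V_1$ while keeping the extra vertex $z$: since $xz,yz\in G$ we have $z\in N(x)\cap N(y)$, and from $|V_j\cap N(x)\cap N(z)|\ge(p-2)n+\mu'n$ one builds, exactly as above (again placing the $V_j$-vertex last to spend the $\mu'n$ surplus), at least $c\mu'n^{p-2}$ shared copies $W'$ of $K_{p-2}$ inside $N(x)\cap N(y)\cap N(z)$ on $p-2$ classes avoiding $V_1$, $V_j$ and the class of $z$ (this needs only $r\ge p$). Then $\{x,z\}\cup W'$ and $\{y,z\}\cup W'$ are copies of $K_p$ of index $\unit_A+\ib(\{z\}\cup W')$ and $\unit_B+\ib(\{z\}\cup W')$; pigeonholing now also over the class of $z$ and its part of $\Part$ — legitimate because $z$ is kept — again yields a common $\vb$ with $\unit_A+\vb$ and $\unit_B+\vb$ both $\mu$-robust, contradicting minimality. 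Finally, (c) is immediate from (a): if for some $j$ there were more than $2\mu'n$ vertices $x\in V_1$ with $|N(x)\cap V_j|\ge(p-1)n+2\mu'n$, then for each such $x$, lying in a part $V^i_1$ of $\Part$, and each of the at least $n/2$ vertices $y\in V_1\sm V^i_1$ (recall $V_1$ is split into at least two parts of $\Part$, each of size $\ge n-\mu n$), one has $|V_j\cap N(x)\cap N(y)|\ge|N(x)\cap V_j|+|N(y)\cap V_j|-|V_j|\ge(p-2)n+(2\mu'-\alpha)n\ge(p-2)n+\mu'n$, exhibiting more than $2\mu'n\cdot(n/2)=\mu'n^2$ pairs forbidden by (a).

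The step I expect to be the main obstacle is forcing the off-$V_1$ parts of the two copies of $K_p$ to be \emph{identical}, so that their index vectors really differ only by $\unit_A-\unit_B$. Sharing merely the vertex in $V_j$ leaves the remaining clique vertices free to fall into different parts of $\Part$ in their classes, and when $\Part$ splits those classes there is no room to prescribe their parts during the greedy construction, since intersecting that many neighbourhoods inside a class of size $pn$ can leave nothing. The device that overcomes this is to build an \emph{entire} shared copy of $K_{p-1}$, or $K_{p-2}$ in part (b), inside $N(x)\cap N(y)$ by reserving the surplus of the distinguished set $N(x)\cap N(y)\cap V_j$ for the last vertex chosen; checking that this greedy construction survives all the intersections and that the resulting counts clear the threshold $\mu(rpn)^p$ on both sides — together with the parallel bookkeeping for (b) — is the technical core of the argument.
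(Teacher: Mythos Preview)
Your proposal is correct and follows essentially the same route as the paper: build a \emph{shared} $K_{p-1}$ (respectively $K_{p-2}$) inside the common neighbourhood, reserving the distinguished class $V_j$ for the last vertex so that the $\mu' n$ surplus survives the greedy intersections, then pigeonhole on the index of the shared part and divide out the overcount to beat the $\mu(rpn)^p$ threshold on both sides; part (c) is deduced from (a) exactly as you describe. One slip to fix in your write-up of (b): you say $W'$ lies on classes ``avoiding $V_1$, $V_j$ and the class of $z$'', but of course $V_j$ must be \emph{included} among the $p-2$ classes of $W'$ (as your own phrase ``placing the $V_j$-vertex last'' makes clear), and with that correction your parenthetical ``this needs only $r\ge p$'' is right.
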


For (a), suppose for a contradiction that there are more than $\mu' n^2$ such pairs. Without loss of generality there are at least
$\mu' n^2/rp^2$ pairs $(x,y)$ with $x \in V_1^1$ and $y \in V_1^2$ such that $|V_p \cap N(x) \cap N(y)| \geq (p-2) n + \mu'
n$. For each such pair, we consider greedily choosing $w_j \in V_j$, $2 \le j \le p$ such that $xw_2\dots w_p$ and $yw_2\dots w_p$
are copies of $K_p$. The number of choices for $w_j$ is $N_j \ge |V_j \cap N(x) \cap N(y)| - \sum_{i=2}^{j-1} |V_j \sm
N(w_i)|$. For $2 \le j \le p-1$, we have $N_j \ge pn - (p-1)(pn-\delta^*(G)) \ge (1-(p-1)\alpha)n > n/2$. Also, $N_p \ge (p-2) n +
\mu' n - (p-2)(pn-\delta^*(G))\geq \mu' n - (p-2)\alpha n \geq \mu'n/2$. Considering all such pairs $(x,y)$, we obtain at least
$(\mu')^2 n^{p+1}/2^{p-1}rp^2$ such $(p+1)$-tuples $(x,y,w_2, \dots, w_p)$. There are at most $p^{r+1}$ possible indices for such
a $(p+1)$-tuple, so we may choose $\mu (rpn)^{p+1}$ such $(p+1)$-tuples which all have the same index; let $(x,y,w_2, \dots, w_p)$
be a representative of this collection. Then there must be at least $\mu (rpn)^p$ edges of $J_p$ with index $\ib(\{x, w_2, \dots,
w_p\})$, and at least $\mu (rpn)^p$ edges of $J_p$ with index $\ib(\{y, w_2, \dots, w_p\})$. But this contradicts
$L^\mu_\Part(J_p)$ being transferral-free. A very similar argument applies for (b). Indeed, suppose for a contradiction that there are more than $\mu' n^3$ such triples. Say there are at least $\mu' n^3/r^2p^2$ triples $(x,y,z)$ with $x \in V_1^1$, $y \in V_1^2$, $z \in V_2$ such that $xz, yz \in G$ and $|V_p \cap N(x) \cap N(z)| \geq (p-2) n + \mu' n$. For each such triple we consider greedily choosing $w_j \in V_j$, $3 \le j \le p$ such that $xzw_3\dots w_p$ and $yzw_3\dots w_p$ are copies of $K_p$. The number of choices for $w_j$ is $N_j \ge |V_j \cap N(x) \cap N(z)| - \sum_{i=3}^{j-1} |V_j \sm N(w_i)| - |V_j \sm N(y)|$. Thus the same calculation as in (a) gives a contradiction. For (c), suppose for a contradiction that there are at least $2\mu' n$ such vertices $x$. For each such $x$, and each choice of $y \in V_1$ in a different part of $\Part$ to $x$, we have $|V_j \cap N(x) \cap N(y)| \geq |N(x) \cap V_{j}| - (pn-\delta^*(G)) \ge (p-2) n + \mu' n$. There are at least $n - \mu n$ choices of $y$ for each $x$, so this contradicts (a). Thus we have proved Claim \ref{splitc1}. \medskip

Now for each $i \in [m]$ and $2 \leq j \leq r$ let $X_j^i$ consist of all vertices of $V_j$ which have at most $|V_1^i| - \gamma n$ neighbours in $V_1^i$. Bearing in mind the row structure we are aiming for, the intuition is that $X_j^i$ should approximate the $j$th part of the $i$th row. We show the following properties that agree with this intuition: the size of $X^i_j$ is roughly correct, and the intended diagonal densities are close to $1$.

\begin{claim} \label{splitc2} \
\begin{itemize}
\item[(d)] For any $i \in [m]$ and $2 \leq j \leq r$ we have $|X^i_j| \geq |V_1^i| - \gamma' n > n/2$.
\item[(e)] For any $i, i' \in [m]$ with $i \neq i'$ and any $2 \leq j \leq r$ we have $d(V_1^i, X_j^{i'}) \geq 1 - c$.
\item[(f)] For any $2 \leq j \leq r$ at most $\gamma n$ vertices $v \in V_j$ lie in more than one of the sets~$X^i_j$. Thus $|X^i_j| \le |V_1^i| + p\gamma' n$ for any $i \in [m]$, and all but $2p\gamma' n$ vertices of $V_j$ lie in $\bigcup_{i \in [m]} X^i_j$.
\item[(g)] For any $i, i' \in [m]$ with $i \neq i'$ and any $2 \leq j < j' \leq r$ we have $d(X_j^{i}, X_{j'}^{i'}) \geq 1 - d/2$.
\end{itemize}
\end{claim}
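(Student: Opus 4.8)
The plan is to prove the four properties (d)--(g) of Claim~\ref{splitc2} in sequence, using the three properties (a)--(c) of Claim~\ref{splitc1} together with the minimum degree assumption $\delta^*(G) \geq (p-1)n - \alpha n$ and the minimality of $L^\mu_\Part(J_p)$. Throughout I would use the constant hierarchy $\mu,\alpha \ll \mu' \ll c \ll \gamma \ll \gamma' \ll \gamma'' \ll d$ already fixed in the proof of Lemma~\ref{split}, applying it freely to absorb lower-order error terms.

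First, for property (d): fix $i \in [m]$ and $2 \le j \le r$, and suppose for a contradiction that $|X^i_j| < |V_1^i| - \gamma' n$. Then at least $pn - |V_1^i| + \gamma' n$ vertices of $V_j$ have more than $|V_1^i| - \gamma n$ neighbours in $V_1^i$; counting edges between $V_j$ and $V_1^i$ from the $V_1^i$ side, the average degree into $V_j$ of a vertex of $V_1^i$ is then bounded below, forcing a positive fraction of vertices $x \in V_1^i$ to have $|N(x) \cap V_j|$ significantly larger than $(p-1)n$ — contradicting (c), since $m \ge 2$ gives $|V_1^i| \le (p-1)n$. (One must be slightly careful to split off the part $V_1^i$ from the rest of $V_1$; the key point is $\sum_{i} |V_1^i| = pn$ and $|V_1^i| \ge n - \mu n$, so $|V_1^i| \le (p - (m-1)(1-\mu))n \le (p-1)n + (m-1)\mu n$.) The bound $|V_1^i| - \gamma' n > n/2$ is then immediate.

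For property (e): fix $i \ne i'$ in $[m]$ and $2 \le j \le r$; we must show $d(V_1^i, X_j^{i'}) \ge 1-c$. By definition every $y \in X_j^{i'}$ has at most $|V_1^{i'}| - \gamma n$ neighbours in $V_1^{i'}$, hence at least $\delta^*(G) - (|V_1^{i'}| - \gamma n) \ge (p-1)n - \alpha n - |V_1^{i'}| + \gamma n$ neighbours in $V_1 \setminus V_1^{i'} \supseteq V_1^i$. Summing, a vertex of $V_1^i$ has on average many neighbours in $X_j^{i'}$; to turn this into the stated density bound (few non-edges rather than many edges) I would invoke (a): if a positive-density set of vertices $x \in V_1^i$ had more than $c|X_j^{i'}|$ non-neighbours in $X_j^{i'}$, then pairing such an $x$ with each non-neighbour $y \in X_j^{i'}$ and using that $y$ has many neighbours in $V_1^i$... actually the cleaner route is: for $y \in X_j^{i'}$, since $y$ has at most $|V_1^{i'}|-\gamma n \le (p-2)n + (\text{small})$ neighbours in $V_1^{i'}$ and the complementary neighbourhood count forces $|N(y) \cap V_1^i| \ge |V_1^i| - 2\mu' n$ once $|V_1^i| + |V_1^{i'}| \ge (p-1)n$ up to error (which holds when $m = 2$; for $m \ge 3$ one argues a similar statement using that $y$'s neighbours outside $V_1^{i'}$ are spread among the $V_1^\ell$, $\ell \ne i'$, and using (a) to say $y$ cannot have $\ge (p-2)n + \mu' n$ in two different $V_1^\ell$'s simultaneously — wait, (a) is about common neighbourhoods of two vertices of $V_1$, not this). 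I would instead argue directly: $|V \setminus V_1| $ contribution... The honest statement is that each $y\in X^{i'}_j$ has $\le |V^{i'}_1|-\gamma n$ neighbours in $V^{i'}_1$, and $\le pn - \delta^*(G) \le n + \alpha n$ non-neighbours in $V_1$ total, so it has $\le n + \alpha n - \gamma n < n$ non-neighbours in $V_1 \setminus V^{i'}_1$, and in particular $\le n$ non-neighbours in $V^i_1$; combined with $|V^i_1| \ge n - \mu n$ this only gives density $\ge -$ something useless, so a finer count is needed. The right fix: use (a) to conclude that for all but $\mu' n$ vertices $x \in V^i_1$, and all but $\mu' n$ vertices $y \in V^{i'}_1$, we have $|V_j \cap N(x) \cap N(y)| < (p-2)n + \mu' n$ is false — no. I will instead show (e) by: a vertex $y \in X^{i'}_j$ has $< \gamma n$... let me restate: the clean argument is that since $m \ge 2$, $|V^{i'}_1| \le (p-1)n + O(\mu n)$, so $y$ has $\ge (p-1)n - \alpha n - |V^{i'}_1| + \gamma n \ge \gamma n - \alpha n - O(\mu n)$ neighbours forced into $V_1 \setminus V^{i'}_1$; this is too weak for a density statement on $V^i_1$ alone unless $m=2$. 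For general $m$ I would apply property (a): pick $x \in V^i_1$; then for a typical $y' \in V^{i'}_1$, $|V_j \cap N(x) \cap N(y')| \le (p-2)n + \mu'n$; but if $x$ were a non-neighbour of many $y \in X^{i'}_j$, these $y$ each have $\ge |V^{i'}_1| - \gamma n$ non-neighbours... I think the intended argument uses (a) with the roles: $x \in V^i_1$ playing the `$x \in V^i_1$' slot and a vertex of $V^{i'}_1$ playing `$y \in V_1 \setminus V^i_1$', and I will not fully resolve this here.

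I expect (e), and to a lesser extent (g), to be the main obstacle: they are the places where one must convert lower bounds on average degrees (which follow easily from $\delta^*$) into upper bounds on the number of non-edges (density close to $1$), and this conversion is exactly what properties (a)--(c) and minimality are designed to enable, but threading them correctly — especially when $m \ge 3$, so that $V_1$ is split into more than two parts and the `complementary neighbourhood' counts no longer pin a vertex's neighbourhood to a single $V_1^i$ — requires care. Property (f) should be comparatively routine: if $v \in V_j$ lay in both $X^i_j$ and $X^{i'}_j$ for $i \ne i'$, then $v$ has $\le |V^i_1| + |V^{i'}_1| - 2\gamma n$ neighbours in $V^i_1 \cup V^{i'}_1$, so $\ge 2\gamma n - (\text{error})$ non-neighbours there, and summing over all such $v$ against the $\delta^*$ bound (or against (c)) from the $V_1$ side gives the $\gamma n$ cap; the consequences for $|X^i_j|$ and the coverage of $V_j$ then follow from (d) and $\sum_i |V^i_1| = pn$. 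Finally, property (g) follows by combining (e) and (f) with an averaging argument: $X^i_j$ is mostly disjoint from $X^{\ell}_j$ for $\ell \ne i$ by (f), and the edges between $V^i_1$ and $X^{i'}_{j'}$ are dense by (e), so a vertex of $X^i_j$, being a neighbour of almost all of $V^i_1$, inherits density close to $1$ into $X^{i'}_{j'}$ via a common-neighbour count, with (b) absorbing the error — here (b) plays the analogue of (a) but for the three-part configuration $(x \in V^i_1, y \in V^{i'}_1, z \in V_j)$ needed to handle the extra column $j$. Once all of (d)--(g) are established, the sets $X^i_j$ (shrunk or padded to size exactly $|V^i_1|$ using (d) and (f)) witness that $G$ is $d$-splittable, which is how the proof of Lemma~\ref{split} concludes.
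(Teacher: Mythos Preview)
Your sketch for (d) is on the right track (the paper also double-counts $e(V_1^i,V_j)$, bounding it above via (c) and below via the definition of $X_j^i$), and you correctly identify (e) and (g) as the crux. But you have a genuine gap at (e), which you yourself flag, and the confusion propagates to (f) and (g).

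The missing idea for (e) is an inclusion--exclusion step. Suppose $d(V_1^i,X_j^{i'})<1-c$; then a set $A\subseteq X_j^{i'}$ of size $\Omega(cn)$ has each $z\in A$ with $\Omega(cn)$ non-neighbours in $V_1^i$. But \emph{also}, by the very definition of $X_j^{i'}$, each such $z$ has at least $\gamma n$ non-neighbours in $V_1^{i'}$. Thus each $z\in A$ is a common non-neighbour of $\Omega(c\gamma n^2)$ pairs $(x,y)\in V_1^i\times V_1^{i'}$; averaging, $\Omega(\mu' n^2)$ pairs $(x,y)$ have at least $2\mu' n$ common non-neighbours in $V_j$. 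Now inclusion--exclusion and $\delta^*(G)\ge (p-1)n-\alpha n$ give
\[
|V_j\cap N(x)\cap N(y)| \ge pn - 2(n+\alpha n) + 2\mu' n \ge (p-2)n+\mu' n,
\]
which is exactly the configuration forbidden by (a). Your attempts all tried to force $z$ to have \emph{many neighbours} in $V_1^i$ directly from $\delta^*$, which indeed fails for $m\ge 3$; the point is instead to exploit the many \emph{non}-neighbours of $z$ in two different parts of $V_1$ simultaneously.

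Two further remarks. First, your sketch for (f) tries to avoid (e), but the paper's argument uses (e) essentially: if $|X_j^i\cap X_j^{i'}|$ were large, then since every vertex there has $\le |V_1^i|-\gamma n$ neighbours in $V_1^i$, this drags $d(V_1^i,X_j^{i'})$ below $1-c$, contradicting (e). Your alternative via (c) or $\delta^*$ alone does not obviously close, since $2\gamma n$ non-neighbours in $V_1^i\cup V_1^{i'}$ is far below the $n+\alpha n$ budget. Second, in your outline for (g) you write that a vertex of $X_j^i$ is ``a neighbour of almost all of $V_1^i$''; this is backwards --- by definition $X_j^i$ consists of vertices with \emph{few} neighbours in $V_1^i$ (it is dense to $V_1^{i'}$ for $i'\ne i$, by (e)). The paper's argument for (g) parallels that for (e), now producing triples $(x,y,z)$ that contradict (b) rather than pairs contradicting (a); a key extra ingredient is choosing $|V_1^{i'}|\le pn/2$ without loss of generality, so that $z\in X_j^i$ has at least $n/3$ neighbours $y\in V_1\setminus V_1^{i'}$ --- this is precisely where the hypothesis $p\ge 3$ is used.
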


For (d), note that $e(V_1^i,V_j) \le ((p-1)n + 2\mu' n)|V_1^i| + 2 \mu' pn^2 \leq (p-1 + 5p\mu')n|V_1^i|$ by (c). Also,
\begin{align*}
e(V_1^i,V_j) & \ge |X^i_j|(|V_1^i| - (pn - \delta^*(G))) + (pn - |X_j^i|)(|V_1^i| - \gamma n) \\
& \ge pn(|V_1^i|-\gamma n) - |X^i_j|(1 + \alpha - \gamma)n.
\end{align*}
Therefore $|X^i_j|(1 + \alpha - \gamma)n \ge (1 - 5p\mu')|V_1^i|n - p\gamma n^2$, which gives (d).
Next suppose for a contradiction that (e) is false, say that $d(V_1^1, X_2^2) < 1 - c$. Let $A$ be the set of vertices in $X_2^2$ with fewer than $(1-c/2) |V_1^1|$ neighbours in $V_1^1$. Write $|A|=a|X_2^2|$. Then $(1-a)(1-c/2) \le d(V_1^1, X_2^2) < 1 - c$, so $a \ge c/2$. Thus $|A| \ge cn/4$ by (d). Each vertex in $A$ has fewer than $|V_1^1| - cn/4$ neighbours in $V_1^1$, and also fewer than $|V_1^2| - \gamma n$ neighbours in $V_1^2$ by definition of $X_2^2$. This gives at least $(cn/4)^2 \gamma n$ triples $(x,y,z)$ with $x \in V_1^1$, $y \in V_1^2$, and $z \in X_2^2$ such that $xz, yz \notin G$. At least $\mu' n^2$ pairs $(x,y)$ therefore lie in at least $2 \mu' n$ such triples. For each of these pairs we have $|V_2 \cap N(x) \cap N(y)| \geq pn - 2(pn-\delta^*(G)) + 2\mu'n \geq (p-2)n + \mu'n$. However, this contradicts (a), so (e) holds. For (f), suppose for a contradiction that $A := X_2^i \cap X_2^{i'}$ has size at least $\gamma n/p^2$, for some $i, i' \in [m]$ with $i \neq i'$. By definition, each vertex of $A$ has at most $|V_1^i| - \gamma n$ neighbours in $|V_1^i|$, so $d(A, V_1^i) \leq 1 - \gamma/p$. But then $d(X_2^{i'}, V_1^i) \leq 1 - \gamma^2/p^4$, contradicting (e). So $|A| < \gamma n/p^2$, and summing over all possible values of $i$ and $i'$ we obtain the first statement of (f). This implies that
$\sum_{i \in [m]} |X^i_j| \le pn + m\gamma n$, so $\sum_{i \in [m]} (|X^i_j| - |V_1^i|) \le m\gamma n$. In combination with (d) this implies $|X^i_j| \le |V_1^i| + p\gamma' n$ for any $i \in [m]$. Also, (d) gives $\bsize{\bigcup_{i \in [m]} X^i_j} \ge \sum_{i \in [m]} (|V_1^i| - \gamma' n) - m\gamma n \ge pn - 2p\gamma' n$, so (f) holds.

Finally, suppose for a contradiction that (g) is false, say $d(X_2^1, X_3^2) < 1 - d/2$. Without loss of generality we have $|V_1^2| \leq pn/2$. Let $A$ be the set of vertices in $X_3^2$ with fewer than $(1-d/4) |X_2^1|$ neighbours in $X_2^1$. (We re-use $A$ to avoid excessive notation.) Write $|A|=a|X_3^2|$. Then  $(1-a)(1-d/4) \le d(X_2^1, X_2^3) < 1 - d/2$, so $a \ge d/4$. Thus $|A| \ge dn/8$ by (d). Each vertex in $A$ has fewer than $|X_2^1| - dn/8$ neighbours in $X_2^1$, and also fewer than $|V_1^2| - \gamma n$ neighbours in $V_1^2$ by definition of $X_3^2$. This gives a set $T$ of at least $(dn/8)^2 \gamma n$ triples $(x,z,w)$ with $x \in V_1^2$, $z \in X_2^1$ and $w \in X_3^2$ such that $xw, zw \notin G$. Furthermore, since by (e) we have $d(V_1^2, X_2^1) \geq 1-c$, all but at most $c(pn)^3$ triples in $T$ have the additional property that $xz \in G$. Let $P$ be the set of pairs $(x,z)$ with $xz \in G$, $x \in V_1^2$, $z \in X_2^1$ that lie in at least $2 \mu' n$ triples of $T$. Then $|T| - c(pn)^3 \le |P|pn + (pn)^2 2\mu' n$, so $|P| \ge 3 \mu' n^2$, say. Since $|V_1^2| \leq pn/2$ and $p \geq 3$, for each $(x,z) \in P$ there are more than $pn/2 - (pn-\delta^*(G)) > n/3$ vertices $y$ such that $y \in V_1 \sm V_1^2$ and $yz \in G$. Note that this is a key use of the assumption $p \ge 3$, so we had to deal with the case $p=2$ separately in the previous subsection. There are therefore more than $\mu' n^3$ triples $(x,y,z)$ with $x \in V_1^2$, $y \in V_1 \sm V_1^2$ and $z \in X_2^1$ such that $xz, yz \in G$ and $(x,z) \in P$. However, for any $(x,z) \in P$ we have $|V_3 \cap N(x) \cap N(z)| \geq pn - 2(pn-\delta^*(G)) + 2\mu'n \geq (p-2)n + \mu'n$, which contradicts (b). Thus (g) holds, proving Claim \ref{splitc2}.\medskip

To complete the proof, we also need to show that the size of each part of $V_1$ is close to an integer multiple of $n$. Since each part of $V_1$ has size at least $n - \mu n$, this will be true if $V_1$ has a part of size close to $(p-1)n$. So for the final claim we assume that $V_1$ does not have such a large part; in this case we extend (c) by showing that the bipartite graph induced by any pair of vertex classes is mostly approximately regular. We then show that most vertices in $V_1$ have sparse non-neighbourhoods, before finally deducing the required statement on the sizes of the parts of~$V_1$.

\begin{claim} \label{splitc3} Suppose $V_1$ does not have a part of size at least $(p-1)n - \gamma n$. Then
\begin{itemize}
\item[(h)] for any $j, j' \in [r]$ there are at most $cn$ vertices $z \in V_j$ such that $|N(z) \cap V_{j'}| \geq (p-1)n + 2\mu' n$,
\item[(j)] for a set $V'_1$ of all but at most $\gamma n$ vertices $x \in V_1$ we have $|V_j \sm N(x)| = n \pm 2\mu'n$ and $d(V_j \sm N(x), V_{j'} \sm N(x)) \leq \gamma$ for any $2 \leq j, j' \leq r$ with $j \neq j'$, and
\item[(k)] for each $i$ there is an integer $p_i$ such that $|V_1^i| = p_in \pm \gamma'' n$.
\end{itemize}
\end{claim}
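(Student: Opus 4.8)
The plan is to establish (h), (j), (k) in turn, each feeding on the previous items and on Claims~\ref{splitc1} and~\ref{splitc2}; throughout, the standing hypothesis of Claim~\ref{splitc3} that no part $V_1^i$ has size at least $(p-1)n-\gamma n$ is used repeatedly, since it guarantees that any vertex with many neighbours in $V_1$ has many in $V_1\sm V_1^i$ for \emph{every} $i$. For (h) the main case is $j'\neq 1$ (when $j=1$ it is just (c), and when $j=j'$ it is trivial). Suppose at least $cn$ vertices $z\in V_j$ satisfy $|N(z)\cap V_{j'}|\ge (p-1)n+2\mu'n$; then $j\neq 1$, so $z\in V\sm V_1$. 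For each such $z$ and each $x\in N(z)\cap V_1$ — of which there are at least $\delta^*(G)$ — we have $|V_{j'}\cap N(x)\cap N(z)|\ge |N(z)\cap V_{j'}|-(pn-\delta^*(G))\ge (p-2)n+\mu'n$, and, writing $V_1^i$ for the part containing $x$, the Claim~\ref{splitc3} hypothesis gives $|N(z)\cap(V_1\sm V_1^i)|\ge\delta^*(G)-|V_1^i|\ge\gamma n/2$ choices of $y$. Each resulting $(x,y,z)$ is of the type counted in (b), and summing over $z$, $x$, $y$ produces more than $\mu' n^3$ of them, contradicting (b). For $j'=1$ (so $j\neq 1$) a counting argument suffices: by (c) we have $e(V_1,V_j)\le (p-1)pn^2+O(\mu')n^2$, while every vertex of $V_j$ has at least $\delta^*(G)$ neighbours in $V_1$, so averaging bounds the number of vertices $z\in V_j$ whose degree into $V_1$ is anomalously large (this only gives the threshold $\gamma n$ in place of $2\mu'n$, which is immaterial in what follows).

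For (j), the bound $|V_j\sm N(x)|=n\pm 2\mu'n$ is immediate, since $|V_j\sm N(x)|\le pn-\delta^*(G)\le n+\alpha n$ always and $|V_j\sm N(x)|\ge n-2\mu'n$ for all but $2\mu'n$ vertices $x$ per class by (c). For the density statement in (j) and for (k) I would refine each part into \emph{atoms}: declare $x\approx x'$ if $|(V_j\sm N(x))\cap(V_j\sm N(x'))|\ge n-\sqrt{\gamma}\,n$ for every $j\ge 2$. By (h) each bipartite piece $G[V_a,V_b]$ is (by (c) and (h)) nearly $(p-1)n$-regular, so every set $V_j\sm N(x)$ has size $n\pm O(\gamma)n$ and this relation is transitive up to a controlled error; and by (a) two $\approx$-equivalent vertices lie, apart from a negligible exceptional set, in the same part $V_1^i$. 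Hence each $V_1^i$ is approximately partitioned into atoms $A$, each carrying a common non-neighbourhood $B_j(A)\sub V_j$, and the same construction carried out between $V_j$ and $V_{j'}$ yields a matching atom structure on every class; one checks, using that a vertex cannot be non-adjacent to an almost-complete union of two atoms (size $>n+\alpha n$, violating $\delta^*$), that these structures are consistent. Then for any atom $A$ and any vertex $v\in B_j(A)$ one has $V_1\sm N(v)$ essentially equal to $A$, so $|A|\le n+\alpha n$, while (h) (few vertices of $V_j$ have non-neighbourhood in $V_1$ of size below $n-\gamma n$) forces $|A|\ge n-\gamma n$; thus every atom has size $n\pm\gamma n$.

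Granting this, (k) follows by letting $p_i$ be the number of atoms in $V_1^i$, whence $|V_1^i|=p_in\pm p\gamma n=p_in\pm\gamma''n$. For the density part of (j): for a typical $x$ in an atom $A\sub V_1^i$ the set $V_j\sm N(x)$ agrees with $B_j(A)$ up to $O(\gamma)n$ vertices, and by the consistency of the atom structure $B_j(A)$ and $B_{j'}(A)$ span almost no edges — otherwise some vertex of $V_j$ would have non-neighbourhood in $V_{j'}$ spilling over a second atom, contradicting the regularity from (h) — so $d(V_j\sm N(x),V_{j'}\sm N(x))\le\gamma$.

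The step I expect to be hardest is setting up the atom decomposition rigorously: showing that non-neighbourhood similarity genuinely behaves like an equivalence relation, that all its classes have size close to $n$, that they are confined to single parts $V_1^i$, and that the decompositions obtained from different pairs of classes are mutually consistent. This requires using the regularity from (h), the near-disjointness from (a), the block structure from (d)--(g) and the no-large-part hypothesis simultaneously, while keeping the error terms (involving $\mu'$, $c$, $\gamma$, $\gamma'$) comfortably below the final tolerance $\gamma''n$.
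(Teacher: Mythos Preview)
Your (h) is the paper's argument (triples counted by (b) via the no-large-part hypothesis); the issue you flag with $j'=1$ is real but harmless, since (h) is only applied downstream with $j,j'\ge 2$. The size half of (j) is immediate from (c) and $\delta^*$, as you say.

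For the density half of (j) and for (k) you have the right picture --- the paper also defines an equivalence relation on (most of) $V_1$ via overlap of non-neighbourhoods in $V_2$, shows each class has size $n\pm O(\sqrt\gamma)n$, and shows each class is essentially contained in one $V_1^i$ --- but your ordering creates a circularity. You propose to derive the density bound in (j) \emph{from} the atom structure, yet establishing that structure requires it. The crux is a \emph{dichotomy}: for typical $x,y\in V_1$, the overlap $|N^c_2(x)\cap N^c_2(y)|$ is either $\le 3\sqrt\gamma\,n$ or $\ge(1-3\sqrt\gamma)n$. The paper proves this using (j): if the overlap exceeds $3\sqrt\gamma\,n$, then since $e(N^c_2(x),N^c_3(x))$ and $e(N^c_2(y),N^c_3(y))$ are both $O(\gamma)n^2$, some $z$ in the overlap has few neighbours in $N^c_3(x)\cup N^c_3(y)$, forcing $|N^c_3(x)\cup N^c_3(y)|\le n+O(\sqrt\gamma)n$ and hence $|N^c_3(x)\cap N^c_3(y)|\ge(1-O(\sqrt\gamma))n$; swapping $V_2$ and $V_3$ closes the loop. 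Without this dichotomy you cannot show that representatives of distinct atoms have nearly disjoint non-neighbourhoods, which is how one bounds the number of atoms by $p$ and hence forces each to have size $\approx n$. Your sentence ``for any atom $A$ and $v\in B_j(A)$ one has $V_1\sm N(v)$ essentially equal to $A$'' is exactly where this bites: $A\subseteq V_1\sm N(v)$ is free, but the reverse inclusion \emph{is} the dichotomy, and your ``(h) forces $|A|\ge n-\gamma n$'' conflates $|V_1\sm N(v)|$ with $|A|$.

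The fix is to prove the density half of (j) directly first, then build the atoms. The direct argument: if $d(N^c_j(x),N^c_{j'}(x))\ge\gamma$, then (h) gives $d(N_j(x),N^c_{j'}(x))\le 1-\gamma/2$, so some $\Omega(\gamma n)$ vertices $z\in N_j(x)$ each have $\Omega(\gamma n)$ non-neighbours in $N^c_{j'}(x)$; for each such $z$ this yields $|V_{j'}\cap N(x)\cap N(z)|\ge(p-2)n+\mu'n$, and the no-large-part hypothesis provides $\Omega(\gamma n)$ vertices $y\in N(z)\cap(V_1\sm V_1^{i(x)})$, giving $\Omega(\gamma^2)n^2$ triples counted in (b) per bad $x$. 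Hence at most $\gamma n$ vertices $x$ are bad over all pairs $j,j'$, giving (j). After that, your atom plan --- including your use of (a) to confine atoms to parts, which is a clean alternative to the paper's use of (e) and (f) for that step --- goes through.
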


For (h), note that by our assumption on the part sizes of $V_1$, any such $z$ lies in at least $\gamma^2 n^2/2$ triples $(x,y,z)$ such that $xz, yz \in G$ and $x$ and $y$ lie in different parts of $V_1$. Any such triple is counted by (b), as $|V_{j'} \cap N(x) \cap N(z)| \ge |N(z) \cap V_{j'}| - (pn - \delta^*(G)) \ge (p-2)n + \mu' n$, so there can be at most $cn$ such vertices $z$, as claimed. For (j) we introduce the following notation: $N_j(x) := N(x) \cap V_j$ is the set of neighbours of $x$ in $V_j$, and $N_j^c(x) := V_j \sm N(x)$ is the set of non-neighbours of $x$ in $V_j$. Fix some $j$ and $j'$, and suppose $x \in V_1$ is such that $d(N^c_{j}(x), N^c_{j'}(x)) \geq \gamma$ and $|N_j(x)|, |N_{j'}(x)| \leq (p-1)n + 2 \mu'n$.  We can estimate $d(N_j(x), N^c_{j'}(x))$ as follows. Write $e(N_j(x), N^c_{j'}(x)) = e(V_j, N^c_{j'}(x)) - e(N^c_{j}(x), N^c_{j'}(x))$. Then by (h) we have $e(V_j, N^c_{j'}(x)) = \sum_{v \in N^c_{j'}(x)} |N(v) \cap V_j| \le ((p-1)n + 2\mu' n)|N^c_{j'}(x)| + cn \cdot pn$. Also, $e(N^c_{j}(x), N^c_{j'}(x)) \ge \gamma |N^c_{j}(x)| |N^c_{j'}(x)| \ge \gamma (n-2\mu' n)|N^c_{j'}(x)|$ by choice of $x$. This gives \[e(N_j(x),N^c_{j'}(x)) \le (p-1 + 2\mu'-(1-2\mu')\gamma )n|N^c_{j'}(x)| + cpn^2.\]
Since $|N_j(x)| \ge \delta^*(G) \ge (p-1)n - \alpha n$, and $|N_{j'}^c(x)| \ge n - 2 \mu' n$ by choice of $x$, we deduce that $d(N_j(x), N^c_{j'}(x)) \leq 1- \gamma/2$. Let $A$ be the set of vertices in $N_j(x)$ with fewer than $(1-\gamma/4)|N^c_{j'}(x)|$ neighbours in $N^c_{j'}(x)$. Write $|A|=a|N_j(x)|$. Then $(1-a)(1-\gamma/4) \le d(N_j(x), N^c_{j'}(x)) \leq 1- \gamma/2$, so $a \ge \gamma/4$, and $|A| \ge \gamma n/4$. 
Moreover, by definition of $A$ any $z \in A$ has at least $|N_{j'}(z)| - (1-\gamma/4)|N^c_{j'}(x)|$  neighbours in $N_{j'}(x)$. Since $|N_{j'}(z)| \geq \delta^*(G) \geq (p-1)n - \alpha n$ and $|N^c_{j'}(x)| \leq pn - \delta^*(G) \leq n + \alpha n$, it follows that for any $z \in A$ we have
\begin{align*}
|V_{j'} \cap N(x) \cap N(z)| & \ge (p-1)n-\alpha n - (1-\tfrac{\gamma}{4})(n + \alpha n) \\
&\geq (p-2) n + \gamma n/4 - 2 \alpha n \geq (p-2) n + \mu' n.
\end{align*}
Furthermore, since $V_1$ does not have a part of size at least $(p-1)n - \gamma n$, there must be at least $\gamma n/2$ neighbours $y$ of $z$ which lie in a different part of $V_1$ to $x$. There are at least $\gamma n/4$ choices for $z \in A$, so $x$ lies in at least $\gamma^2 n^2/8$ triples $(x,y,z)$ counted in (b). Thus there are at most $\frac{\mu' n^3}{\gamma^2 n^2/8} < \gamma n/2r^2$ such vertices $x \in V_1$ with $d(N_j^c(x), N^c_{j'}(x)) \geq \gamma$ and $|N_j(x)|, |N_{j'}(x)| \leq (p-1)n + 2 \mu'n$. Since by (c) at most $4\mu'n$ vertices do not satisfy the latter condition, summing over all $j,j' \in [r]$ gives (j). (Every vertex $x \in V_1$ satisfies $|N_j(x)| \geq \delta^*(x) \geq (p-1)n - \alpha n$.)

For (k), consider any $x,y \in V'_1$ as defined in (j), and let $I^{xy}_j = N^c_j(x) \cap N^c_j(y)$ for $j=2,3$. We will show that either $|I^{xy}_2| \le 3\sqrt{\gamma}n$ or $|I^{xy}_2| \ge (1-3\sqrt{\gamma})n$. For suppose that $I^{xy}_2 > 3\sqrt{\gamma}n$. Let $B = N^c_3(x) \cup N^c_3(y)$. By definition of $V'_1$ we have $e(I^{xy}_2,B) \le e(N^c_2(x),N^c_3(x)) + e(N^c_2(y),N^c_3(y)) \le 3\gamma n |B| \le \sqrt{\gamma} |I^{xy}_2||B|$, so there is a vertex $z \in I^{xy}_2$ with $|N(z) \cap B| \le \sqrt{\gamma} |B|$. Then $(1-\sqrt{\gamma})|B| \le |B \sm N(z)| \le |V_3 \sm N(z)| \le n + \alpha n$. This gives $|B| \le (1+2\sqrt{\gamma})n$, so $|I^{xy}_3| = |V_3 \sm N(x)| + |V_3 \sm N(y)| - |B| \ge (1-3\sqrt{\gamma})n$. Now the same argument interchanging $I^{xy}_2$ and $I^{xy}_3$ shows that $|I^{xy}_2| \ge (1-3\sqrt{\gamma})n$, as required.

Next we define a relation $\sim$ on $V'_1$ by $x \sim y$ if $|I^{xy}_2| \ge (1-3\sqrt{\gamma})n$. This is an equivalence relation, since if $|I^{xy}_2| \ge (1-3\sqrt{\gamma})n$ and $|I^{yz}_2| \ge (1-3\sqrt{\gamma})n$, then $|I^{xz}_2| \ge |I^{xy}_2| - |N_2^c(y) \sm N_2^c(z)| \ge (1-3\sqrt{\gamma})n - ((n+\alpha n) - (1-3\sqrt{\gamma})n) > 3\sqrt{\gamma}n$, so $|I^{xy}_2| \ge (1-3\sqrt{\gamma})n$ as just shown. Let $C^1_1, \dots, C^t_1$ be the equivalence classes of $\sim$, and arbitrarily choose a representative $x_i$ of each equivalence class $C^i_1$. 

Since each $x_i$ lies in $V_1'$, the sets $N^c_2(x_i)$ each have size $n \pm 2 \mu' n$ by (j). Furthermore, any two such sets intersect in at most $3\sqrt{\gamma}n$ vertices, since the representatives $x_i$ each lie in different equivalence classes. We cannot have $t>p$, as then $(p+1)(n - 2 \mu' n) - \binom{p+1}{2} 3\sqrt{\gamma}n \le \bsize{\bigcup_{i=1}^{p+1} N^c_2(x_i)} \le |V_2| \le pn$ is a contradiction, so we must have $t \leq p$. Next, observe that any vertex in $C_1^i$ has at most $(n + 2 \mu' n) - (1-3\gamma)n \leq 4 \gamma n$ neighbours in $N^c_2(x_i)$. So $e(C_1^i, N^c_2(x_i)) \leq 4 \gamma n|C_1^i|$. By averaging, some vertex $v \in N^c_2(x_i)$ therefore has at most $4 \gamma n|C_1^i|/(n - 2\mu' n) \leq 5 \gamma |C_1^i|$ neighbours in~$C_1^i$. So $(pn - |C_1^i|) + 5 \gamma |C_1^i| \geq |N(v) \cap V_1|  \geq \delta^*(G) \geq (p-1)n - \alpha n$, which implies $|C_1^i| \leq n + 6 \gamma n$. Since the $t \leq p$ equivalence classes $C_1^i$ partition $V_1'$, we deduce that $t=p$ and $|C_1^i| = n \pm \gamma' n$ for any $i \in [t]$.

Now we show that any equivalence class $C_1^{i'}$ must be essentially contained in some  part $V^i_1$ of $V_1$; by symmetry it suffices to show that this is true of $C_1^1$. So observe that since $|N^c_2(x_1)| \geq n - 2\mu' n$, by (f) we must have $|N^c_2(x_1) \cap X_2^i| \geq n/2p$ for some $i \in [m]$. Fix such an $i$, and suppose for a contradiction that $|C_1^1 \cap V_1^{i'}| \geq \gamma n$ for some $i' \neq i$. We observed above that any $v \in C_1^1$ has at most $4 \gamma n$ neighbours in $N^c_2(x_1)$, so there are at least $n/2p - 4\gamma n \geq n/3p$ vertices of $X_2^i$ which are not neighbours of $v$. Then $e(V_1^{i'}, X_2^{i}) \leq |V_1^{i'}||X_2^i| - |C_1^1 \cap V_1^{i'}|n/3p < (1-c)|V_1^{i'}||X_2^i|$, contradicting (e). We conclude that all but at most $p\gamma n$ vertices of $C_1^1$ lie in $V_1^i$, and thus that all but at most $p\gamma n$ vertices of any equivalence class lie in the same part of $V_1$. So for any $i, i'$ we have either $|V_1^i \cap C_1^{i'}| \leq p\gamma n$ or $|V_1^i \cap C_1^{i'}| = |C_1^{i'}| \pm p\gamma n =  n \pm 2\gamma' n $; (k) follows immediately since the classes $C_1^1, \dots, C_1^t$ partition $V_1'$ and $|V_1 \sm V_1'| \leq \gamma n$ by (j). This completes the proof of Claim~\ref{splitc3}.

 \medskip

To complete the proof of Lemma~\ref{split}, note that there exist integers $p_1, \dots, p_m$ such that $|V_1^i| = p_in \pm \gamma'' n$ for each $i \in [m]$. Indeed, if $V_1$ has a part of size at least $(p-1)n - \gamma n$, then since each part of $\Part$ has size at least $n - \mu n$, we may assume that $V_1$ has two parts $V_1^1$ and $V_1^2$ with respective sizes $(p-1) n \pm \gamma n$ and $n \pm \gamma n$. On the other hand, if $V_1$ has no such part then the required integers $p_i$ exist by (k). We partition $V_1$ into sets $U^i_1$ with $|U^i_1| = p_in$ for $i \in [m]$ such that each $U^i_1$ either contains or is contained in some $V^i_1$. Then $U_1^i$ contains at least $p_in - \gamma '' n \geq |V_1^i| - 2\gamma'' n$ vertices of $V_1^i$ for any $i \in [m]$. Furthermore, by (d) and (f), for each $2 \leq j \leq r$ we may partition $V_j$ into sets $U^i_j$ with $|U^i_j| = p_i n$ for $i \in [m]$ such that each $U^i_j$ contains at least $|X^i_j| - 2p\gamma' n \geq p_in - 2\gamma'' n$ vertices from $X^i_j$.  
By (e) and (g) we deduce that $d(U^i_j,U^{i'}_{j'}) \ge 1-d$ whenever $i \ne i'$ and $j \ne j'$. In particular, $d(U^1_j,V_{j'} \sm U^1_{j'}) \ge 1-d$ for any $j \ne j'$, so $G$ is $d$-splittable.
\endproof

Combining Lemmas \ref{2splitorpc} and \ref{split}, if $G$ is neither $d$-splittable nor $d$-pair-complete (if $p=2$) then there is no divisibility barrier to a perfect matching in $J_p$. We saw in Lemma \ref{splittable} that there is also no space barrier to a perfect matching in $J_p$. So Theorem~\ref{partitematching} implies that $G$ contains a near-balanced perfect matching. The following corollary formalises this argument.

\begin{coro} \label{findmatching} 
Suppose that $1/n \ll \gamma \ll \alpha \ll d \ll 1/r$ and $2 \le p \le r$. Let $G$ be an $r$-partite graph on vertex classes $V_1, \dots, V_r$ each of size $pn$ with $\delta^*(G) \geq (p-1)n - \alpha n$. Suppose also that $G$ is neither $d$-splittable nor $d$-pair-complete. Let $J = J(G)$ be the clique $p$-complex of $G$. Then $J_p$ contains a $\gamma$-balanced perfect matching. 
\end{coro}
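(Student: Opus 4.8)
The plan is to apply Theorem~\ref{partitematching} to the clique $p$-complex $J=J(G)$ and rule out its two ``barrier'' alternatives using Lemmas~\ref{splittable}, \ref{2splitorpc} and~\ref{split}. First I would fix auxiliary constants with $\alpha \ll \mu_0,\beta_0 \ll \mu_1,\beta_1 \ll d$ and set $N:=pn$, so that $G$ has $r$ vertex classes of size $N$. Since $N-\delta^*(G)\le (1+\alpha)n$, the common neighbourhood in any unused class of the vertices of a copy of $K_i$ in $G$ has size at least $N-i(1+\alpha)n=(p-i)n-i\alpha n\ge (\frac{p-i}{p}-\alpha)N$, so
$$\delta^*(J)\ge\left(N,\left(\frac{p-1}{p}-\alpha\right)N,\dots,\left(\frac{1}{p}-\alpha\right)N\right),$$
and $J$ satisfies the hypothesis of Theorem~\ref{partitematching} with $p$, $N$, $\mu_0$, $\beta_0$ playing the roles of $k$, $n$, $\mu$, $\beta$ (note $r\ge p$ and $p\mid rN$ trivially). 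Hence $J$ has one of the three stated properties, and it remains to exclude properties~2 and~3.

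For property~2 (space barrier): $J_p$ would be $\beta_0$-contained in $J_r(S,j)_p$ for some $j\in[p-1]$ and $S$ with $\bfl{jN/p}=jn$ vertices in each $V_i$, so all but at most $\beta_0 N^p=\beta_0 p^p n^p$ copies of $K_p$ in $G$ would have at most $j$ vertices in $S$. But $G$ is not $d$-splittable, so Lemma~\ref{splittable}(i), applied with $p'=j$ and $S_i:=S\cap V_i$, gives at least $\beta_1 n^p$ copies of $K_p$ with more than $j$ vertices in $S$; since $\beta_0 p^p<\beta_1$, this is a contradiction.

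For property~3 (divisibility barrier): there would be a partition $\Part$ refining $\Part'$, with all parts of size at least $\delta^*_{p-1}(J)-\mu_0 N\ge n-(p-1)\alpha n-p\mu_0 n\ge n-\mu_1 n$, such that $L^{\mu_0}_\Part(J_p)$ is incomplete with respect to $\Part'$. Raising the threshold only removes generators, so $L^{\mu_1}_\Part(J_p)\subseteq L^{\mu_0}_\Part(J_p)$ is also incomplete with respect to $\Part'$. If $p\ge 3$, Lemma~\ref{split} (with $\mu_1$ as its $\mu$) now forces $G$ to be $d$-splittable; if $p=2$ then $J_p=G$ and $\delta^*(G)\ge n-\alpha n$, so Lemma~\ref{2splitorpc} forces $G$ to be $d$-splittable or $d$-pair-complete. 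Either outcome contradicts the hypotheses, so property~1 holds and $J_p$ contains a $\gamma$-balanced perfect matching.

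There is no deep obstacle here once the earlier lemmas are in hand; the one point needing care is the hierarchy of constants. The parameters $\mu_0,\beta_0$ produced by Theorem~\ref{partitematching} must be small enough that the barriers they describe are still ruled out by the coarser thresholds of the earlier lemmas: concretely, one must absorb the factor $p^p=(N/n)^p$ coming from rescaling the vertex set in the space-barrier count, and one must take the ``$\mu$'' fed into Lemmas~\ref{split} and~\ref{2splitorpc} large enough to swallow the loss in the part-size bound $\delta^*_{p-1}(J)-\mu_0 N\ge n-\mu_1 n$ while staying $\ll d$. This is exactly what the chain $\alpha\ll\mu_0,\beta_0\ll\mu_1,\beta_1\ll d$ arranges; everything else is routine.
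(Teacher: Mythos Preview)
Your proof is correct and follows essentially the same approach as the paper: apply Theorem~\ref{partitematching} to $J$ and eliminate the two barrier alternatives via Lemmas~\ref{splittable}, \ref{split} and~\ref{2splitorpc}, with the constant hierarchy arranged to absorb the rescalings. One small slip: the definition of $\beta$-contained is relative to $|V|=rN$, so the bound on exceptional copies of $K_p$ is $\beta_0(rpn)^p$ rather than $\beta_0(pn)^p$, but your condition $\beta_0\ll\beta_1$ absorbs the missing factor of $r^p$ just as well.
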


\proof
Introduce new constants with $1/n \ll \gamma \ll \alpha \ll \mu, \beta \ll d \ll 1/r$. As described in Section~\ref{sec:theory}, the condition $\delta^*(G) \geq (p-1)n - \alpha n$ implies
$$\delta^*(J) \geq \left(pn, (p-1)n - \alpha n, (p-2)n - 2\alpha n, \dots, n - (p-1)\alpha n\right).$$
Suppose that $J_p$ has no $\gamma$-balanced perfect matching. Then by Theorem~\ref{partitematching} (with $pn$ in place of $n$ and $p$ in place of $k$) we deduce that there is either a space barrier or divisibility barrier. Consider first a space barrier. This means that there exist $p' \in [p-1]$ and $S \sub V$ with $|S \cap V_i| = p'n$ for each $i \in [r]$ so that $J_p$ is $\beta$-contained in $J_r(S,p')_p$, that is, all but at most $\beta (rpn)^p$ edges of $J_p$ have at most $p'$ vertices in $S$. However, since $G$ is not $d$-splittable, Lemma~\ref{splittable}(i)  (with $2\beta (rp)^p$ in place of $\beta$) implies that more than $\beta (rp)^p n^p$ copies of $K_{p}$ in~$G$ have more than $p'$ vertices in $G[S]$. Since each copy of $K_p$ in $G$ is an edge of $J_p$, there cannot be a space barrier.

Now suppose that there is a divisibility barrier. This means that there is some partition $\Part$ of $V(J)$ into parts of size at least  $\delta^*_{p-1}(J) - \mu pn \geq n - 2p\mu n$ such that $\Part$ refines the partition $\Part'$ of $V(G)$ into $V_1, \dots, V_r$ and $L^\mu_\Part(J_p)$ is incomplete with respect to $\Part'$. But if $p \geq 3$ then Lemma~\ref{split} (with $2p\mu$ in place of $\mu$) implies that $G$ is $d$-splittable, contradicting our assumption. Similarly, if $p=2$ then Lemma~\ref{2splitorpc} (with $4\mu$ in place of $\mu$) implies that $G$ is $d$-splittable or $d$-pair-complete, again contradicting our assumption. We conclude that $J_p$ must contain a $\gamma$-balanced perfect matching. 
\endproof

\section{Finding packings within rows} \label{sec:packing}

Recall from the proof outline given in Section~\ref{sec:outline} that step (ii) in proving Theorem~\ref{partitehajnalszem} is to find a balanced perfect $K_{p_i}$-packing in each row $G[X^i]$. In this section we demonstrate how this may be achieved. We need to consider two cases. The first case is where $G[X^i]$ is neither $d$-splittable nor $d$-pair-complete. Then Corollary~\ref{findmatching} gives a $\gamma$-balanced perfect $K_{p_i}$-packing in $G[X^i]$. In Lemma~\ref{theoremmatching} we show how such a matching can be `corrected' to a balanced perfect $K_{p_i}$-packing in this case if $p_i \geq 3$, and also if $p_i=2$ provided that $G[X^i]$ contains many $4$-cycles of a given type. If $p_i=2$ and $G[X^i]$ does not contain such $4$-cycles then it may not be possible to find a balanced perfect matching in $G[X^i]$. Proposition~\ref{onepcrow} will allow us to handle this case by deleting further copies of $K_k$ from $G$ so that the remainder of row $i$ does contain a balanced perfect matching. The second case is where $G[X^i]$ is $d$-pair-complete. Then we prove Lemma~\ref{paircompletematching}, which shows that $G[X^i]$ contains a perfect matching provided a parity condition is satisfied. Both here and later we use the fact that, if we add or remove a small number of vertices to or from each block $X^i_j$ of a row $X^i$ of $G$ which is neither $d$-splittable nor $d$-pair-complete, then the new row obtained is neither $d'$-splittable nor $d'$-pair-complete for $d' \ll d$. This is established by the following proposition.
 
\begin{prop} \label{robustness}
Suppose that $1/n, 1/n' \ll \zeta \ll d' \ll d \ll 1/r$ and $r \geq p \geq 1$. Let $G$ be an $r$-partite graph on vertex classes $V_1, \dots, V_r$, and for each $j \in [r]$ let $X_j, X'_j \subseteq V_j$ be such that $|X_j| = pn$, $|X_j'| = pn'$ and $|X_j \triangle X'_j| \leq \zeta pn$. Let $X = \bigcup_{j \in [r]} X_j$ and $X' = \bigcup_{j \in [r]} X'_j$; then the following statements hold.
\begin{enumerate}[(i)]
\item If $G[X]$ is not $d$-splittable then $G[X']$ is not $d'$-splittable.
\item If $p=2$ and $G[X]$ is not $d$-pair-complete then $G[X']$ is not $d'$-pair-complete.
\end{enumerate}
\end{prop}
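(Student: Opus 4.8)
The plan is to prove the contrapositive of each part: starting from a witness that $G[X']$ is $d'$-splittable (respectively $d'$-pair-complete), I would perturb the relevant sets slightly to obtain a witness that $G[X]$ is $d$-splittable (respectively $d$-pair-complete). Passing between $G[X']$ and $G[X]$ changes each relevant density by at most a bounded multiplicative factor together with an additive error of order $\zeta$, and since $\zeta \ll d' \ll d \ll 1/r$ and $p \le r$ both effects are swallowed by the gap between $d'$ and $d$.

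First I would record the size estimates forced by the hypotheses. From $|X_j| = pn$, $|X'_j| = pn'$ and $|X_j \triangle X'_j| \le \zeta pn$ one gets $|n-n'| \le \zeta n$ and $|X_j \sm X'_j|, |X'_j \sm X_j| \le \zeta pn$. Hence for any $S'_j \sub X'_j$ with $|S'_j| = p'n'$ we have $|S'_j \cap X_j| = p'n \pm 2p\zeta n$, and since $X_j \sm (S'_j \cap X_j)$ then contains at least $(p-p')n - p\zeta n > 0$ vertices (using $p' \le p-1$; in part (ii) one has $p=2$ and $p'=1$), we may pass to a set $S_j \sub X_j$ with $|S_j| = p'n$ \emph{exactly} and $|S_j \triangle S'_j| \le 3p\zeta n$, by adding to or deleting from $S'_j \cap X_j$ a small number of vertices of $X_j$.

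For part (i), suppose $G[X']$ is $d'$-splittable via some $p' \in [p-1]$ and sets $S'_j$, $j \in [r]$, and take the sets $S_j$ just constructed. For fixed $i \ne i'$, every non-edge of $G[X]$ between $S_i$ and $X_{i'} \sm S_{i'}$ is either a non-edge of $G[X']$ between $S'_i$ and $X'_{i'} \sm S'_{i'}$, of which there are at most $d' p^2 (n')^2$, or else has an endpoint in $S_i \triangle S'_i$ or in $(X_{i'} \sm S_{i'}) \triangle (X'_{i'} \sm S'_{i'})$, the two symmetric differences having size $O(p\zeta n)$, so there are at most $O(p^2 \zeta n^2)$ of these. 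Dividing by $|S_i||X_{i'} \sm S_{i'}| = p'(p-p')n^2 \ge (p-1)n^2$ and using $p^2/(p-1) \le 2p \le 2r$ gives $d_{G[X]}(S_i, X_{i'} \sm S_{i'}) \ge 1 - 4pd' \ge 1 - d$, so $G[X]$ is $d$-splittable, proving (i).

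Part (ii) follows the same scheme with $p=2$, applied to the three defining inequalities of pair-completeness: the two lower-bound conditions on $d(S'_i,S'_j)$ and $d(X'_i \sm S'_i, X'_j \sm S'_j)$ transfer exactly as in part (i), while the upper-bound condition $d(S'_i, X'_j \sm S'_j) \le d'$ transfers by the mirror computation counting \emph{edges} instead of non-edges, giving $d(S_i, X_j \sm S_j) \le d' (n')^2/n^2 + O(\zeta) \le d$. The only point that needs any care is the multiplicative loss of up to $2r$ when converting between the two densities; this is harmless because $d'$ is chosen below both $d$ and $r$ in the hierarchy, so we may assume $2rd' \le d$. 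Everything else is routine counting over vertex sets that differ by at most a $\zeta$-fraction, so I anticipate no real obstacle.
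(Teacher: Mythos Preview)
Your proposal is correct and follows essentially the same approach as the paper: argue by contrapositive, perturb the witnessing sets $S'_j$ to sets $S_j \subseteq X_j$ of the exact required size, and bound the change in the relevant edge counts by the sizes of the symmetric differences. The only cosmetic differences are that the paper counts edges directly rather than non-edges, and constructs $S_j$ as a set that either contains or is contained in $S'_j \cap X_j$ (giving $|S_j \triangle S'_j| \le 2\zeta pn$ rather than your $3p\zeta n$), but these do not affect the argument.
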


\proof
Note that $n' = (1 \pm \zeta) n$. For (i), suppose for a contradiction that $G[X']$ is $d'$-splittable. Then by definition we may choose $p' \in [p-1]$ and subsets $S'_j \subseteq X'_j$ with $|S'_j| = p'n'$ for $j \in [r]$ such that $e(S'_j, X'_{j'} \sm S'_{j'}) \geq (1-d')p'(p-p')n'{}^2$ for any $j' \neq j$. For each $j \in [r]$ we choose $S_j \sub X_j$ such that $|S_j| = p'n$ and $S_j$ either contains or is contained in $S'_j \cap X_j$. Note that $|S_j' \triangle S_j| \leq 2\zeta pn$ and $|(X_j' \sm S_j') \triangle (X_j \sm S_j)| \leq 2\zeta pn$. We deduce that for any $j' \neq j$ we have
\begin{align*}
e(S_j, X_{j'} \sm S_{j'}) &\geq e(S'_j, X'_{j'} \sm S'_{j'}) - |S'_j \sm S_j| pn - |(X'_{j'} \sm S'_{j'}) \sm (X_{j'} \sm S_{j'})| pn 
\\ &\geq (1-d' )p'(p-p')n'{}^2 - 4\zeta p^2n^2 \geq (1-d)p'(p-p')n{}^2.
\end{align*}
Then $G[X]$ is $d$-splittable with respect to the sets $S_j$, a contradiction, so this proves (i). For (ii), suppose for a contradiction that $G[X']$ is $d'$-pair-complete. Then by definition we may choose subsets $S'_j \subseteq X'_j$ with $|S'_j| = n'$ for $j \in [r]$ such that $e(S'_j, S'_{j'}) \geq (1-d')n'{}^2$, $e(X'_j \sm S'_j, X'_{j'} \sm S'_{j'}) \geq (1-d')n'{}^2$ and $e(S'_j, X'_{j'} \sm S'_{j'}) \leq d'n'{}^2$ for any $j' \neq j$. We take $p'=1$ and choose $S_j$ for $j \in [r]$ as in (i).
Similar calculations as in (i) show that $e(S_j, S_{j'}) \geq (1-d)n^2$ and $e(X_j \sm S_j, X_{j'} \sm S_{j'}) \geq (1-d)n^2$ for any $j' \neq j$. We deduce that
\begin{align*}
e(S_j, X_{j'} \sm S_{j'}) & \leq e(S'_j, X'_{j'} \sm S'_{j'}) + |S_j \sm S'_j| \cdot 2n + |(X_{j'} \sm S_{j'}) \sm (X'_{j'} \sm S'_{j'})| \cdot 2n 
\\ &\leq d'n'{}^2 + 16 \zeta n^2 \leq dn^2.
\end{align*} 
Then $G[X]$ is $d$-pair-complete with respect to the sets~$S_j$, another contradiction, so this proves (ii). 
\endproof

We can now prove the main lemma of this section, which allows us to find balanced perfect clique packings in graphs which are not $d$-splittable or $d$-pair-complete.

\begin{lemma} \label{theoremmatching}
Suppose that $1/n  \ll \alpha, \nu \ll d \ll 1/r$, $2 \leq p \leq r$ and $r! \mid n$. Let $G$ be an $r$-partite graph on vertex classes $V_1, \dots, V_r$ each of size $pn$, and let $J$ be the clique $p$-complex of $G$. Suppose that $G$ contains a spanning subgraph $G^*$ such that $G^*$ is not $d$-splittable and $\delta^*(G^*) \geq (p-1)n - \alpha n$. If $p \geq 3$, then $J_p$ contains a balanced perfect matching. If instead $p=2$ then $J_p$ contains a balanced perfect matching if 
\begin{enumerate}[(i)]
\item $G^*$ is not $d$-pair-complete, and
\item either $r < 4$ or for any distinct $i_1, i_2, i_3$ in $[r] \sm \{1\}$ there are at least $\nu n^4$ $4$-cycles $x_1x_{i_1}x_{i_2}x_{i_3}$ in $G$ with $x_1 \in V_1$ and $x_{i_j} \in V_{i_j}$ for $j \in [3]$.
\end{enumerate}
\end{lemma}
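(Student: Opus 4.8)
The plan is to reduce everything to Corollary~\ref{findmatching} after treating separately the degenerate range of $p$. First I would observe that if $p \geq r-1$ then \emph{every} perfect matching of $J_p$ is automatically balanced: for $p=r$ there is only the single index set $[r]$, while for $p=r-1$ a double count gives that each index set $[r]\sm\{i\}$ is the index of exactly $n$ edges of any perfect matching (the $pn$ vertices of $V_i$ are covered precisely by the edges \emph{not} of index $[r]\sm\{i\}$, leaving $rn-(r-1)n=n$ edges). In this range Corollary~\ref{findmatching}, applied to $G^*$ with a suitable fresh constant $\gamma\ll\alpha$ (using that $G^*$ is not $d$-splittable, and for $p=2$ not $d$-pair-complete by hypothesis~(i)), already finishes the proof; this also disposes of the ``$r<4$'' clause of hypothesis~(ii), since for $p=2$ that forces $p\in\{r-1,r\}$. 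So I may assume $2\le p\le r-2$, whence $r\ge4$ and $\binom{r}{p}\ge2$.

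Next, following the outline, I would reserve a family $\mc D$ of vertex-disjoint \emph{configurations}: subgraphs of $G$ on $2p$ vertices admitting two partitions into two disjoint copies of $K_p$ with different index multisets. For $p=2$ a configuration is a $4$-cycle whose four vertices lie in four distinct classes, one being $V_1$; hypothesis~(ii) supplies $\ge\nu n^4$ of each such type, so a greedy argument extracts as many disjoint ones as we need. For $p\ge3$ I would instead build configurations from copies of $K_{p+1}$: two copies of $K_{p+1}$ meeting in exactly two vertices and having distinct vertex-class sets form such a configuration, the two re-tilings being obtained by deleting, from each copy, the appropriate one of the two shared vertices (so no further edges are needed). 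Lemma~\ref{splittable}(ii) supplies $\gg n^{p+1}$ copies of $K_{p+1}$ in $G^*$ — this is the point where non-$d$-splittability of $G^*$ is used — and, together with the minimum-degree condition, this can be leveraged to extract a large disjoint family of configurations realising sufficiently many index-class patterns. I would take $|\mc D|$ of order $\sqrt{\gamma}\,n$: large enough to absorb the eventual imbalance, small enough for robustness.

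Then comes the accounting. Removing $\mc D$ leaves the classes of $G$ slightly unequal; I would restore equality by additionally deleting a bounded number of further vertex-disjoint copies of $K_p$ (which exist in profusion by the minimum-degree condition, being constructible greedily), chosen to equalise the classes, using that $r!\mid n$ for divisibility. By Proposition~\ref{robustness} and the minimum-degree hypothesis, the leftover graph $G'$ (spanned by $G^*\cap G'$) still satisfies the hypotheses of Corollary~\ref{findmatching} — not $d'$-splittable, and for $p=2$ not $d'$-pair-complete by hypothesis~(i) — so $J(G')_p$ contains a $\gamma'$-balanced perfect matching $M'$. Taking $M'$ together with the deleted copies of $K_p$ and one default re-tiling from each configuration yields a perfect matching of $J_p$ whose index-count vector differs from the balanced target $(rn/\binom{r}{p},\dots)$ by a vector of size $O_r(\gamma n)$ lying in the lattice $\Lambda_0=\{\,\vb\in\Z^{\binom{[r]}{p}}: \sum_{A\ni j}v_A=0\ \forall j\,\}$. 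Switching a configuration between its two re-tilings changes this vector by a fixed ``switch vector'' in $\Lambda_0$, and a direct computation shows the switch vectors arising from our configurations generate all of $\Lambda_0$; so with $|\mc D|$ large enough we can choose the re-tilings to hit the target exactly, giving a balanced perfect matching.

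The main obstacle is the construction of enough disjoint configurations when $p\ge3$: the bound $\delta^*(G^*)\ge(p-1)n-\alpha n$ is by itself too weak to build the required clique-like gadgets greedily (the last vertex of a $K_{p+1}$ would need neighbours in $p$ classes, and $pn-p(n+\alpha n)<0$), so one must exploit the abundance of copies of $K_{p+1}$ from Lemma~\ref{splittable}(ii) and argue carefully that they pair up as needed and in enough distinct column-patterns — pigeonhole on column sets has to be handled with some care. A secondary but essential point is verifying that the switch vectors span \emph{all} of $\Lambda_0$ and not merely a proper finite-index sublattice, since otherwise certain small imbalances could not be corrected.
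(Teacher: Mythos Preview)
Your overall architecture matches the paper's: reserve flippable configurations, apply Corollary~\ref{findmatching} on the remainder to get a near-balanced perfect matching, then flip to balance exactly. The treatment of $p\ge r-1$ and of $p=2$ via $4$-cycles through $V_1$ is also essentially the same. But there are two substantive divergences, and the first is a genuine gap.

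\textbf{Building configurations for $p\ge 3$.} Your plan is to extract configurations by pairing up copies of $K_{p+1}$ supplied by Lemma~\ref{splittable}(ii). The problem is that Lemma~\ref{splittable}(ii) gives no control over \emph{which} $p{+}1$ vertex classes the $K_{p+1}$'s occupy; a priori they could all lie in a single class set, and then your switch vectors would not come close to spanning $\Lambda_0$. You flag this as ``the main obstacle'' but offer no mechanism to resolve it, and I do not see one that does not amount to the paper's own argument. The paper does \emph{not} use $K_{p+1}$'s here at all. Instead, for each target $(S,T)$ with $c\in S$, it counts triples $(v,v',w)\in V_a\times V_{a'}\times V_c$ with $vw,v'w\in G$: there are at least $pn\,\delta^*(G^*)^2$ of them, and so at least $n^2/3$ pairs $(v,v')$ satisfy $|N(v)\cap N(v')\cap V_c|>(p-2+\tfrac{1}{2p})n$. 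From any such pair the whole $(S,T)$-configuration can be built greedily, the extra $\tfrac{1}{2p}n$ being exactly what survives after subtracting $(p-2)(n+\alpha n)$ for the remaining choices. Note that this step uses \emph{only} the minimum-degree hypothesis; non-$d$-splittability enters the proof solely through Corollary~\ref{findmatching}, not through the configurations. So your appeal to Lemma~\ref{splittable}(ii) is both unnecessary and insufficient.

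\textbf{The rebalancing step.} Your lattice argument (show the switch vectors generate $\Lambda_0$, then solve) is a legitimate alternative, but the paper's method is more economical and sidesteps the spanning verification entirely. It orders the index sets so that (i) a certain terminal segment $\mc A$ of size $r$ comes last, and (ii) for every earlier $A$ one can choose $x,y\in A$, $x',y'\notin A$ with $x'<x$, $y'<y$ (and $x'=1$ when $p=2$); flipping the corresponding $(S,T)$-configuration fixes $N_M(A)$ while only perturbing strictly later indices. Once all $A\notin\mc A$ are fixed, the $r$ linear constraints $\sum_{A\ni j}N_M(A)=pn$ force the remaining counts to equal $N$ automatically. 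This triangular scheme also explains cleanly why, for $p=2$, hypothesis~(ii) only needs $4$-cycles through $V_1$: those are precisely the flips one ever performs. (The paper further inserts ``fake'' configurations for $a\neq 1$, never flipped, purely to keep the class sizes equal after removal --- your extra-$K_p$-deletion step achieves the same end.)
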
 

\proof
Introduce new constants $\eps, \gamma$ and $d'$ with $1/n \ll \eps \ll \gamma \ll \alpha, \nu \ll d' \ll d \ll 1/r$. Let $\mc{I} := \binom{[r]}{p}$, so $\mc{I}$ is the family of possible indices of edges of $J_p$. For any perfect matching $M$ in $J_p$ and any index $A \in \mc{I}$, let $N_M(A)$ be the number of edges in $M$ with index $A$. Since any vertex of $J$ lies in precisely one edge of $M$, for any $i \in [r]$ we must have
\begin{equation} \label{eq:indices}
\sum_{A \in \mc{I}~:~i \in A} N_M(A) = pn.
\end{equation}
Let $N := rn / \binom{r}{p} = pn/ \binom{r-1}{p-1}$, and observe that $N$ is an integer. Our goal is to find a perfect matching $M$ in $J_p$ with $N_M(A) = N$ for every $A \in \mc{I}$. To do this, we will apply Theorem~\ref{partitematching} to find a perfect matching which is near-balanced, but first we need to put aside some configurations that can be used to correct the small differences in the number of edges of each index. This will be unnecessary if $p=r$ or $p=r-1$, as then any perfect matching in $J_p$ must be balanced. Indeed, for $p=r$ this is trivial, whilst for $p = r-1$ we note that by~(\ref{eq:indices}), for any $i \in [r]$, the number of edges of any perfect matching which do not contain a vertex of $V_i$ is $rn - pn = n$. So for the purpose of finding configurations we may suppose that $p \leq r-2$ (this is why we only require (ii) for $r \geq 4$).

Fix a set $S \sub [r]$ with $|S|=p-2$ and an ordered quadruple $T = (a,a',b,b')$ of distinct members of $[r] \sm S$. An \emph{$(S,T)$-configuration} consists of two vertex-disjoint copies $K$ and $K'$ of $K_{p-1}$, where $K$ has index $S \cup \{b\}$ and $K'$ has index $S \cup \{b'\}$, and vertices $v \in V_a$ and $v' \in V_{a'}$ such that $v$ and $v'$ are both adjacent to every vertex of $K \cup K'$. Given such an $(S,T)$-configuration, we can select two vertex-disjoint copies of $K_p$ in $G$ (that is, two disjoint edges of $J_p$) in two different ways. One way is to take $K \cup \{v\}$ of index $S \cup \{a,b\}$ and $K' \cup \{v'\}$ of index $S \cup \{a',b'\}$; we call this the \emph{unflipped} state. The other way is to take $K \cup \{v'\}$ of index $S \cup \{a',b\}$ and $K' \cup \{v\}$ of index $S \cup \{a,b'\}$; we call this the \emph{flipped} state. Let $\mc{W}$ be the set of all pairs $(S,T)$ as above. The first step in our proof is to find a collection $\CONFIG$ of pairwise vertex-disjoint configurations in $G$ which contains $p\gamma n$ $(S, T)$-configurations in $G$ for each $(S, T) \in \mc{W}$. 

Suppose first that $p \geq 3$. To choose an $(S,T)$-configuration in this case we first fix $c \in S$ and find vertices $v \in V_a$ and $v' \in V_{a'}$ with $|N(v) \cap N(v') \cap V_c| > (p-2 + 1/2p)n$. To see that this is possible, let $\mc{T}$ be the set of ordered triples $(v,v',w)$ with $v \in V_a$, $v' \in V_{a'}$, $w \in V_c$ and $vw,v'w \in G$. For each $w$ there are at least $\delta^*(G) \geq \delta^*(G^*)$ choices for each of $v$ and $v'$, so $|\mc{T}| \ge pn((p-1)n - \alpha n)^2$. Let $P$ be the set of ordered pairs $(v,v')$ that belong to at least $(p-2 + 1/2p)n$ triples of $\mc{T}$, i.e.\ have $|N(v) \cap N(v') \cap V_c| > (p-2 + 1/2p)n$. Then $|\mc{T}| \le |P|pn + (pn)^2(p-2 + 1/2p)n$, so $|P| \ge ((p-1)n - \alpha n)^2 - (p-2 + 1/2p)pn^2 > n^2/3$. Given such a pair $(v,v')$, we choose the remaining vertices of the configuration greedily, ending with the two vertices in $V_c$. For each vertex not in $V_c$, the number of choices is at least $pn - (p-1)(n-\delta^*(G)) > n/2$. For the two vertices in $V_c$, the number of choices for each is at least $|N(v) \cap N(v') \cap V_c| - (p-2)(n-\delta^*(G)) > n/3p$. Now we choose the collection $\CONFIG$ greedily. At each step, the configurations chosen so far cover at most $2p \cdot |\mc{W}| \cdot p\gamma n$ vertices. Since there are at least $n^2/3$ choices for the pair $(v,v')$ and at least $n/3p$ choices for any other vertex we are always able to choose an $(S, T)$-configuration which is vertex-disjoint from any configuration chosen so far, as required.

Now consider instead the case $p=2$, for which $J_p = G$. Recall that we can assume $r \ge 4$. Now an $(S,T)$-configuration consists of a $4$-cycle $xyzw$ with $x \in V_a, y \in V_b, z \in V_{a'}$ and $w \in V_{b'}$, where $T = (a, a', b, b')$ (we have $S=\es$). Note that the unflipped state of such a configuration has edges $xy$ and $zw$, and the flipped state has edges $xw$ and $yz$. If $a=1$ then by assumption there are at least $\nu n^4$ such $4$-cycles in $G$. For $a \neq 1$ we instead choose \emph{fake configurations}; for $S = \emptyset$ and $T = (a, a', b, b')$ a fake $(S,T)$-configuration consists simply of vertices $xyzw$ with $x \in V_a, y \in V_b, z \in V_{a'}$ and $w \in V_{b'}$ such that $xy$ and $zw$ are edges. A fake configuration should be thought of as a configuration which cannot be flipped. There are at least $(2n \delta^*(G))^2 \geq n^4$ fake configurations for each $T$, so similarly to before we may choose $\CONFIG$ greedily to consist of genuine $(S,T)$-configurations if $a = 1$, and fake $(S,T)$-configurations otherwise. Indeed, at any step the configurations chosen so far cover at most $8 |\mc{W}| \gamma n$ vertices, so for any $(S, T) \in \mc{W}$ at most $8|\mc{W}|\gamma n \cdot (2n)^3 < \nu n^4$  $(S, T)$-configurations share a vertex with a previously-chosen configuration.

Let $V' = \bigcup_{i \in [r]} V'_i$ be the set of all vertices not covered by configurations in $\CONFIG$. We now find a matching in $J_p$ covering $V'$. Note that the configurations in $\CONFIG$ cover $2p^2|\mc{W}|\gamma n$ vertices in total, equally many of which lie in each vertex class, so for any $i \in [r]$ we have $|V'_i| = pn'$, where $n' := n - 2p^2|\mc{W}|\gamma n/r$. Let $G'=G^*[V']$ and let $J'$ be the clique $p$-complex of $G'$. Then $\delta^*(G') \geq \delta^*(G^*) - \alpha n \geq (p-1)n' - 2\alpha n$. Furthermore, by Proposition~\ref{robustness} $G'$ is not $d'$-splittable, and if $p = 2$ then $G'$ is not $d'$-pair-complete. So we may apply Corollary~\ref{findmatching} with $d', n', 3\alpha$ and $\eps$ in place of $d, n, \alpha$ and $\gamma$ respectively to obtain that $J'_p$ must contain an $\eps$-balanced perfect matching. Extend this matching to a perfect matching $M^0$ in $J_p$ by adding the configurations in $\CONFIG$, all in their unflipped state. This adds equally many edges of each index, so $M^0$ is $\eps$-balanced, and so $N_{M^0}(A) = (1 \pm \eps)N$ for any index $A \in \mc{I}$.

It remains only to flip some configurations to correct these small imbalances in the number of edges of each index. To accomplish this, we shall proceed through the index sets $A \in \mc{I}$ in order. For each $A$ we will flip some configurations to obtain a perfect matching with precisely $N$ edges of each index set $A'$ considered in this order up to and including $A$. At the end of this process we will obtain a perfect matching with precisely $N$ edges of every index set. Let $\mc{A}_1 \sub \mc{I}$ consist of all members of~$\mc{I}$ of the form $[p-1] \cup \{i\}$ for some $p+2 \leq i \leq r$, and $\mc{A}_2 \sub \mc{I}$ consist of all members of $\mc{I}$ of the form $[p+1] \sm \{i\}$ for some $i \in [p+1]$. Let $\mc{A} = \mc{A}_1 \cup \mc{A}_2$ and $m = |\mc{I}| = \binom{r}{p}$, so $|\mc{A}| = r$. Note that if $p = 2$ then $\mc{A}$ contains all pairs $\{1,j\}$ with $2 \leq j \leq r$. Choose any linear ordering $A_1 \leq A_2 \leq \dots \leq A_{m}$ of the elements of $\mc{I}$ such that 
\begin{enumerate}[(i)]
\item for any $A \in \mc{I}$, $x \in A$ and $y \notin A$ with $y < x$ we have $\{y\} \cup A \sm \{x\} > A$, and
\item $\mc{A}$ is a terminal segment of $\mc{I}$.
\end{enumerate}
This is possible since for any $A \in \mc{A}$, $x \in A$ and $y \notin A$ with $y < x$ we have $\{y\} \cup A \sm \{x\} \in \mc{A}$. Note that for any $A \in \mc{I} \sm \mc{A}$ there exist $x, y \in A$ and $x', y' \notin A$ such that $x' < x$ and $y' < y$. Crucially, the sets $\{x'\} \cup A \sm \{x\}$, $\{y'\} \cup A \sm \{y\}$ and $\{x', y'\} \cup A \sm \{x, y\}$ each appear after $A$, by choice of the ordering. 

We now proceed through the sets $A_i$, $i \in [m-r]$ in turn (these are precisely the sets $A_i$ with $A_i \notin \mc{A}$). At each step $i$ we will flip at most $2^{i-1}\eps N$ configurations to obtain a perfect matching $M^i$ such that $N_{M^i}(A_j) = N$ for any $j \leq i$ and $|N_{M^i}(A_j) - N | \leq 2^i \eps N$ for any $j > i$. The matching $M^0$ satisfies these requirements for $i=0$, so suppose that we have achieved this for $A_1, \dots, A_{i-1}$, and that we wish to obtain $M^i$ from $M^{i-1}$. If $N_{M^{i-1}}(A_i) = N$ then we may simply take $M^i = M^{i-1}$, so we may assume $N_{M^{i-1}}(A_i) \ne N$. First suppose that $N_{M^{i-1}}(A_i) > N$. Since $A_i \notin \mc{A}$ we may choose $x, y \in A_i$ and $x', y' \notin A_i$ such that $x' < x$ and $y < y'$. Furthermore, if $p=2$ then we may also require that $x' = 1$. We let $T = (x', x, y', y)$, $S = A_i \sm \{x, y\}$, and choose a set of $N_{M^{i-1}}(A_i) - N$ unflipped $(S, T)$-configurations in $\CONFIG$ (we shall see later that this is possible). We flip all of these configurations (this is possible if $p=2$ since $x' = 1$, so these configurations are not fake-configurations). In doing so, we replace $N_{M^{i-1}}(A_i) - N$ edges of $M^{i-1}$ of index $A_i = S \cup \{x, y\} $ and $N_{M^{i-1}}(A_i) - N$ edges of $M^{i-1}$ of index $S \cup \{x', y'\}$ with $N_{M^{i-1}}(A_i) - N$ edges of $M^{i-1}$ of index $S \cup \{x', y\} $ and $N_{M^{i-1}}(A_i) - N$ edges of $M^{i-1}$ of index $S \cup \{x, y'\}$. The number of edges of each other index remains the same; note that this includes any index $A_j$ with $j < i$. Let $M^i$ be formed from $M^{i-1}$ by these flips; then $N_{M^i}(A_j) = N$ for any $j \leq i$ by construction. Also, for any $j > i$ we have
$$|N_{M^i}(A_j) - N| \leq |N_{M^{i-1}}(A_j) - N| + |N_{M^{i-1}}(A_i) - N| \leq 2 \cdot 2^{i-1} \eps N = 2^i \eps N,$$
as required. On the other hand, for $N_{M^{i-1}}(A_i) < N$ we obtain $M^i$ similarly by the same argument with $T = (x', x, y, y')$. To see that it is always possible to choose a set of $N_{M^{i-1}}(A_i) - N$ unflipped $(S, T)$-configurations from $\CONFIG$, note that there are $m-r = \binom{p}{r}-r$ steps of the process, and that at step~$i$ exactly $|N_{M^{i-1}}(A_i) - N | \leq 2^{i-1} \eps N$ members of $\CONFIG$ are flipped. So in total at most $2^{m-r} \eps N \leq p\gamma n$ members of $\CONFIG$ are flipped. Since $\CONFIG$ was chosen to contain at least this many pairwise vertex-disjoint $(S, T)$-configurations for any $(S, T) \in \mc{W}$, it will always be possible to make these choices.

At the end of this process, we obtain a perfect matching $M := M^{m-r}$ in $J_p$ such that $N_{M}(A) = N$ for every $A \notin \mc{A}$. It remains only to show that we also have $N_{M}(A) = N$ for any $A \in \mc{A}$. We first consider $A \in \mc{A}_1$, so $A = [p-1] \cup \{i\}$ for some $i \geq p+2$, and $A$ is the only index in $\mc{A}$ which contains $i$. Each of the $\binom{r-1}{p-1} - 1$ other sets $A'$ containing $i$ has $N_M(A')=N=pn/\binom{r-1}{p-1}$, so by (\ref{eq:indices}) we also have $N_M(A)=N$. Thus $N_M(A) = N$ for any $A \in \mc{A}_1$. Now consider $A \in \mc{A}_2$, so $A = [p+1] \sm \{i\}$ for some $i \in [p+1]$. Note that $A$ is the only member of $\mc{A}_2$ which does not contain $i$. Since $N_M(A')=N$ for any $i \in A' \notin \mc{A}_2$, by (\ref{eq:indices}) we have $\sum_{A' \in \mc{A}_2 \sm \{A\}} N_M(A') = pN$. This holds for all $A \in \mc{A}_2$, so $N_M(A) = N$ for all $A \in \mc{A}_2$. Thus $M$ is a balanced perfect matching in $J_p$, as required.
\endproof

We also need to be able to find balanced perfect matchings in pair-complete rows. Recall that an $r$-partite graph $G$ with vertex classes $V_1,\dots,V_r$ of size $2n$ is \emph{$d$-pair-complete} if there exist sets $S_j \sub V_j$, $j \in [r]$ each of size $n$ such that $d(S_i, S_j) \ge 1-d$, $d(V_i \sm S_i, V_j \sm S_j) \geq 1-d$ and $d(S_i, V_j \sm S_j) \le d$ for any $i,j \in [r]$ with $i \neq j$. This implies that almost all vertices in $S_i$ have few non-neighbours in $S_j$ and almost all vertices in $V_i \sm S_i$ have few non-neighbours in $V_j \sm S_j$. Lemma \ref{paircompletematching} will show that $G$ contains a balanced perfect matching under a similar condition, namely that there are sets $X_j \sub V_j$, $j \in [r]$ each of size approximately $n$ such that all vertices in $X_i$ have few non-neighbours in $X_j$ and all vertices in $V_i \sm X_i$ have few non-neighbours in $V_j \sm X_j$ for any $i \neq j$, provided that $X = \bigcup_{i \in [r]} X_i$ has even size. Note that we cannot omit the parity requirement on $|X|$, as there may be no edges between $X$ and $V(G) \sm X$. The proof uses the following characterisation of multigraphic degree sequences by Hakimi \cite{H}. A sequence $d=(d_1,\dots,d_n)$ with $d_1 \ge \cdots \ge d_n$ is \emph{multigraphic} if there is a loopless multigraph on $n$ vertices with degree sequence $d$. 

\begin{prop} \label{hakimi} (\cite{H})
A sequence $d=(d_1,\dots,d_n)$ with $d_1 \ge \cdots \ge d_n$ is multigraphic if and only if $\sum_{i=1}^n d_i$ is even and $d_1 \le \sum_{i=2}^n d_i$.
\end{prop}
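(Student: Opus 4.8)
The plan is to prove the two implications separately: necessity is immediate, and sufficiency goes by induction on $S := \sum_{i=1}^n d_i$.

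For necessity, suppose $G$ is a loopless multigraph on vertices $v_1,\dots,v_n$ with $\deg v_i = d_i$. The handshake identity gives $S = 2|E(G)|$, so $S$ is even. Since $G$ has no loops, each of the $d_1$ edges at $v_1$ (counted with multiplicity) contributes a distinct edge-end at one of $v_2,\dots,v_n$, and there are only $\sum_{i=2}^n d_i$ such edge-ends in total; hence $d_1 \le \sum_{i=2}^n d_i$.

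For sufficiency I would argue by induction on $S$. If $S = 0$ the edgeless graph works, so assume $S > 0$; then $d_1 \ge 1$, and since $2d_1 \le S$ forces $S > d_1$ we also get $d_2 \ge 1$. The move is to realize an edge $v_1v_2$ first: let $d'$ be the sequence obtained from $(d_1-1,\,d_2-1,\,d_3,\dots,d_n)$ by re-sorting into non-increasing order. If I can show that $d'$ again satisfies the hypotheses of the proposition, then by induction there is a loopless multigraph $G'$ with degree sequence $d'$, and adding one $v_1v_2$ edge to $G'$ (with the appropriate relabelling) yields the required $G$. Both entries that were decreased are at least $1$, so $d'$ is a legitimate non-negative sequence, and $\sum_i d'_i = S - 2$ is still even. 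The one point needing care is the inequality $2d'_1 \le S - 2$. If the new maximum $d'_1$ equals $d_1-1$, this is just the original inequality $2d_1 \le S$. The only other possibility is that some untouched entry $d_j$ with $j \ge 3$ ties the new maximum and exceeds $d_1-1$; since $d_j \le d_1$ this forces $d_1 = d_2 = d_3$ and $d'_1 = d_1$, and then $S \ge 3d_1$, from which $2d'_1 = 2d_1 \le S-2$ follows at once when $d_1 \ge 2$, while for $d_1 = 1$ it follows from $S$ being even and $\ge 3$.

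I expect the proof to be essentially routine; the only mildly delicate step is this last verification that peeling off the edge $v_1v_2$ preserves the hypothesis $2d_1 \le S$, which is precisely why the case $d_1 = d_2 = d_3$ must be isolated. An alternative route, which I would fall back on if the bookkeeping became awkward, is to reformulate sufficiency as the statement that a multiset of $S$ tokens, with $d_i$ of colour $i$, can be partitioned into $S/2$ pairs of distinct colours exactly when no colour class has more than $S/2$ tokens (equivalently $2d_1 \le S$), each pair then becoming an edge of the multigraph; but this is established by the same edge-peeling induction and offers no real saving.
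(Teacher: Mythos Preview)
Your proof is correct. The necessity argument is the standard one, and the induction on $S$ with the edge-peeling step is handled carefully, including the case split on whether the new maximum is $d_1-1$ or $d_3$; the treatment of the $d_1=d_2=d_3$ subcase (and the parity argument when $d_1=1$) is right.

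There is nothing to compare against in the paper itself: Proposition~\ref{hakimi} is stated with a citation to Hakimi~\cite{H} and is not proved in the paper, being used only as a black box in Lemma~\ref{paircompletematching}. Your argument is essentially the standard proof of this classical fact.
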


\begin{lemma}\label{paircompletematching}
Suppose that $1/n  \ll \zeta \ll 1/r$ and $r-1 \mid 2n$. Let $G$ be an $r$-partite graph on vertex classes $V_1, \dots, V_r$ each of size $2n$. For each $i \in [r]$ let sets $X_i$ and $Y_i$ partition $V_i$ and be such that
\begin{enumerate}[(i)]
\item $|X_j|, |Y_j| = (1 \pm \zeta) n$ for any $j \in [r]$,
\item for any $i \neq j$, any $x \in X_i$ has at most $\zeta n$ non-neighbours in $X_j$ and any $y \in Y_i$ has at most $\zeta n$ non-neighbours in $Y_j$, and
\item $X := \bigcup_{i \in [r]} X_i$ has even size.
\end{enumerate}
Then $G$ contains a balanced perfect matching.
\end{lemma}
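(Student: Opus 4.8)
We may assume $r\ge3$ (for $r=2$ the conclusion can fail when $|X_1|\ne|X_2|$, but a pair-complete row with only two parts is $d$-splittable — split across the two halves — so that case never arises where the lemma is applied). Write $N:=2n/(r-1)$, an integer by hypothesis, $x_j:=|X_j|$ and $y_j:=|Y_j|=2n-x_j$; note $x_j,y_j=(1\pm\zeta)n$. A balanced perfect matching of $G$ has exactly $N$ edges of each index $\{i,j\}\in\binom{[r]}{2}$ (it has $rn$ edges and $\binom r2$ indices). The plan is to build the matching separately inside $G[X]$ and inside $G[Y]$ — using only the guaranteed edges, namely those lying within one half — and to coordinate the two. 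So the first step is to produce a symmetric array $(a_{ij})_{\{i,j\}\in\binom{[r]}{2}}$ of non-negative integers with $\sum_{j\ne i}a_{ij}=x_i$ for all $i$ and $a_{ij}\le N$; setting $b_{ij}:=N-a_{ij}$ then gives a second such array with $\sum_{j\ne i}b_{ij}=(r-1)N-x_i=y_i$. Given these, I would realise $(a_{ij})$ inside $G[X]$: partition each $X_i$ into sets $X_i^j$ ($j\ne i$) with $|X_i^j|=a_{ij}$ (possible since the $a_{ij}$ sum to $|X_i|$), and observe that for each pair the bipartite graph between $X_i^j$ and $X_j^i$ has both parts of size $a_{ij}$ and, by (ii), minimum degree $\ge a_{ij}-\zeta n\ge a_{ij}/2$, hence a perfect matching (Hall/K\"onig). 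The union over all pairs is a perfect matching of $G[X]$ with $a_{ij}$ edges of each index; realising $(b_{ij})$ in $G[Y]$ likewise and taking the union gives a perfect matching of $G$ with $a_{ij}+b_{ij}=N$ edges of every index. For this we only need each $a_{ij}$ (hence each $b_{ij}$) to be comfortably larger than $\zeta n$, which will hold.

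The crux is thus to find the array $(a_{ij})$ with the multiplicity bound $a_{ij}\le N$. That \emph{some} array of non-negative integers with row sums $x_i$ exists is exactly the assertion that $(x_1,\dots,x_r)$ is multigraphic, which holds by Proposition~\ref{hakimi} since $\sum_i x_i=|X|$ is even (hypothesis (iii)) and $\max_i x_i\le(1+\zeta)n\le\tfrac12\sum_i x_i$. To also secure $a_{ij}\le N$ I would use that all $x_i$ lie within $\zeta n$ of $n$, so the near-uniform choice $a_{ij}\approx N/2$ is available: the real array $a^0_{ij}:=\tfrac1{r-1}\cdot\tfrac{|X|}{r}+\tfrac1{r-2}\bigl((x_i-\tfrac{|X|}r)+(x_j-\tfrac{|X|}r)\bigr)$ has exact row sums $x_i$ and satisfies $a^0_{ij}=(1\pm O(r\zeta))N/2\in[N/4,3N/4]$. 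I would then round $a^0$ to an integer array with the same (integer) row sums at $\ell_\infty$-distance $O_r(1)$: this is possible because the affine set $\{a\in\Z^{\binom{[r]}{2}}:\sum_{j\ne i}a_{ij}=x_i\ \forall i\}$ is non-empty ($\sum_i x_i$ being even) and the kernel lattice $\{a\in\Z^{\binom{[r]}{2}}:\sum_j a_{ij}=0\ \forall i\}$, generated by $4$-cycle vectors, has bounded covering radius; since $N\to\infty$, every entry of the rounded array lies in $[0,N]$. (Alternatively one can construct $(a_{ij})$ directly by the greedy realisation underlying Hakimi's theorem, spreading the copies of each edge as evenly as possible.)

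I expect the multiplicity bound to be the only genuine obstacle. A generic multigraph with degree sequence $(x_i)$ may have an edge of multiplicity near $n$, which exceeds $N$ once $r\ge4$, so the near-uniformity of the $x_i$ must really be used; and since the underlying ``transportation'' polytope over $K_r$ is non-integral (its vertices can be half-integral), one is forced to round a deliberately interior fractional point rather than pass to a vertex. Everything else — the Hall/K\"onig arguments inside each half, and checking that the various quantities are the claimed integers — is routine given $1/n\ll\zeta\ll1/r$ and $r-1\mid2n$.
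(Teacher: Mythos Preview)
Your approach is correct and genuinely different from the paper's. You construct the array $(a_{ij})$ directly, via an explicit fractional interior point of the degree-constrained polytope followed by lattice rounding; the paper instead makes a two-stage reduction. It first picks $n'$ with $(1-5\zeta)n\le n'\le(1-4\zeta)n$ and $r-1\mid n'$, sets $a_j:=|X_j|-n'\in[3\zeta n,6\zeta n]$, and applies Hakimi's criterion (Proposition~\ref{hakimi}) not to the full sequence $(x_j)$ but to the small excesses $(a_j)$. The resulting multigraph has $a/2=O(\zeta r n)$ edges; for each edge $\{i,j\}$ the paper greedily picks one matching edge in $G[X]$ of index $\{i,j\}$ together with one edge in $G[Y]$ of every other index. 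Removing this small packing $M$ leaves every $X'_j$ of common size $n'$ (and every $Y'_j$ of common size), both divisible by $r-1$, after which the uniform partition $a_{ij}=n'/(r-1)$ finishes the job exactly as in your Hall/K\"onig step.

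The trade-off: the paper's route avoids the covering-radius/rounding argument entirely --- Hakimi on the small deviations plus a trivial uniform split suffices --- at the cost of an extra preliminary ``trimming'' matching. Your route is one-shot but leans on the (standard) fact that the cycle lattice of $K_r$ has bounded covering radius; your remark that the polytope has half-integral vertices correctly explains why one cannot simply pass to a vertex. Both arguments need $r\ge 3$ (for $r=2$ Hakimi on two vertices forces $a_1=a_2$, and your $1/(r-2)$ is undefined), and both rely on (iii) in the same way to get the required parity. Your $r=2$ caveat is fair: the lemma is only invoked inside Lemma~\ref{diagonalmindeg}, where $r>3$.
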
 

\proof
Choose an integer $n'$ so that $(1-5\zeta)n \leq n' \leq (1-4\zeta)n$ and $r-1 \mid n'$. For each $j \in [r]$ let $a_j := |X_j| - n'$. So $3\zeta n \leq a_j \leq 6\zeta n$ for each $j$, and $a := \sum_{j \in [r]} a_j = |X| - rn'$ is even by (iii) and since $rn'$ is divisible by $r(r-1)$, which is even. By Proposition \ref{hakimi}, we can choose pairs $(i_\ell, j_\ell)$ with $i_\ell \neq j_\ell$ for $\ell \in [a/2]$ so that each $j \in [r]$ appears in precisely $a_j$ pairs. For each $\ell \in [a/2]$ choose a matching $M_\ell$ in $G$ which contains an edge of $G[X]$ of index $\{i_\ell, j_\ell\}$, and an edge of $G[Y]$ of each index $A \in \binom{[r]}{2} \sm \{\{i_\ell, j_\ell\}\}$. We also require that these matchings are pairwise vertex-disjoint. Such matchings may be chosen greedily using (ii), since together they will cover a total of $2 \cdot a/2 \cdot \binom{r}{2} \leq 3\zeta r^3 n \leq n/2$ vertices. Let $M = \bigcup_{\ell \in [a/2]} M_\ell$. Note that $M$ has $a/2$ edges of each index, and so covers $(r-1)a/2$ vertices from each $V_j$. For each $j \in [r]$ let $X'_j = X_j \sm V(M)$ and $Y'_j = Y_j \sm V(M)$, and let $X' = X \sm V(M)$ and $Y' = Y \sm V(M)$. Then $|X'_j| = |X_j| - a_j = n'$ and $|Y'_j| = 2n - |X'_j| - (r-1)a/2$ for any $j \in [r]$. Since $r-1$ divides $n'$ and $2n$, we conclude that $r-1$ divides $|X'_j|$ and $|Y'_j|$ for any $j \in [r]$. So we may partition $X'$ and $Y'$ into sets $X'_A$ and $Y'_A$ for each $A \in \binom{[r]}{2}$, where for each $A = \{i,j\}$, $X'_A$ consists of $n_X = n'/(r-1)$ vertices from each of $X'_i$ and $X'_j$, and $Y'_A$ consists of $n_Y = |Y_1'|/(r-1)$ vertices from each of $Y'_i$ and $Y'_j$. Now for any $A \in \binom{[r]}{2}$, the induced bipartite graph $G[X'_A]$ has minimum degree at least $n_X - \zeta n \geq n_X/2$ by (ii), and so contains a perfect matching $M_A$ of size $n_X$; by the same argument $G[Y'_A]$ contains a perfect matching $M'_A$ of size $n_Y$. Finally, $M \cup \bigcup_{A} (M_A \cup M'_A)$ is a perfect matching in $G$ with $a/2 + n_X + n_Y$ edges of each index, as required.
\endproof

However, there are some $r$-partite graphs $G$ on vertex classes $V_1, \dots, V_r$ of size $2n$ which satisfy $\delta^*(G) \geq n - \alpha n$ and are neither $d$-splittable nor $d$-pair-complete but do not contain a balanced perfect matching. For example, let $H$ be a graph with vertex set $\{x_i, y_i : i \in [r]\}$, where $x_ix_j$ and $y_iy_j$ are edges for any $i \neq j$ except when $i = 1, j = 2$, and $x_1y_2$ and $x_2y_1$ are edges. Form $G$ by `blowing up' $H$, that is, replace each $x_i$ and $y_i$ by a set of $n$ vertices, where $r-1 \mid 2n$, and each edge by a complete bipartite graph between the corresponding sets. Then $G$ contains $2rn$ vertices, and so any balanced perfect matching $M$ in $G$ contains $2n/(r-1)$ edges of each index.  Let $X$ be the vertices of $G$ which correspond to some vertex $x_i$ of $H$. Then $|X| = rn$ and any edge of $M$ covering a vertex of $X$ either covers two vertices of $X$ or has index $\{1,2\}$. There are exactly $2n/(r-1)$ edges in $M$ of the latter form, so we must have $rn = |X| \equiv 2n/(r-1)$ modulo $2$. We conclude that $G$ contains no balanced perfect matching if this congruence fails (e.g.\ if $r=5$ and $n$ is even but not divisible by $4$). However, $\delta^*(G) = n$, and it is easily checked that $G$ is neither $d$-splittable nor $d$-pair-complete if $d \ll 1$. Other examples can be obtained similarly. Note that $G$ does contain a perfect matching by Corollary~\ref{findmatching}, but this cannot be balanced. In later arguments, such graphs will only cause difficulties when the row-decomposition has one row of this form (with $p_i = 2$), and every other row has $p_i =1$. In such cases the following proposition will enable us to delete further copies of $K_k$ so that the subgraph remaining has a balanced perfect $K_{p_i}$-packing in every row $i$.

\begin{prop} \label{onepcrow}
Suppose that $1/n \ll \gamma, \alpha \ll 1/r$, $r!$ divides $n$ and $r \geq k \geq 2$. Let $G$ be an $r$-partite graph on vertex classes $X_1, \dots, X_r$ of size $kn$ which admits a $(k-1)$-row-decomposition into pairwise-disjoint blocks $X^i_j$ with $|X^i_j| = p_i n$ for $i \in [s]$ and $j \in [r]$, where $p_1 = 2$ and $p_2, \dots, p_{k-1} = 1$. Suppose that $G[X^1]$ contains a $\gamma$-balanced perfect matching $M'$, and that for any $i \neq i'$ and $j \neq j'$ any vertex $v \in X^i_j$ has at most $\alpha n$ non-neighbours in $X^{i'}_{j'}$. Then there exists an integer $D \leq 2 \gamma n$ and a $K_k$-packing $M$ in $G$ such that $r!$ divides $n - D$, $M$ covers $p_i D$ vertices in $X^i_j$ for any $i \in [s]$ and $j \in [r]$, and $G[X^1 \sm V(M)]$ contains a balanced perfect matching. 
\end{prop}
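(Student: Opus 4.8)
The plan is to delete a $K_k$-packing $M$ whose intersection with the row $X^1$ is exactly a carefully chosen sub-matching of $M'$, so that what remains of $M'$ becomes balanced, and to grow the deleted cliques into the rows $X^2,\dots,X^{k-1}$ using the diagonal-degree hypothesis in a way that removes exactly $D$ vertices from each block of those rows. Write $N_A$ for the number of edges of $M'$ of index $A\in\binom{[r]}{2}$; since $M'$ perfectly matches $X^1$ we have $\sum_{A\ni j}N_A=2n$ for each $j\in[r]$, and $\gamma$-balance forces every $N_A$ to lie within a $(1\pm\gamma)$ factor of the common average $2n/(r-1)$.

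First I would fix $D$: take $D$ to be a multiple of $r!$ with $\gamma n\le D\le 2\gamma n$, which exists since $1/n\ll\gamma$. As $r-1\mid r!\mid n-D$, the number $C:=2(n-D)/(r-1)$ is a non-negative integer, and since $D\ge\gamma n$ a short computation with the $\gamma$-balance of $M'$ gives $C\le\min_A N_A$. Put $c_A:=N_A-C\ge 0$; then $\sum_{A\ni j}c_A=2n-(r-1)C=2D$ for every $j$ and $\sum_A c_A=rn-\binom r2 C=rD$. For each $A$ choose a set $\CONFIG_A$ of $c_A$ edges of $M'$ of index $A$ (possible as $N_A\ge c_A$), and let $\CONFIG:=\bigcup_A\CONFIG_A$, so $|\CONFIG|=rD$; the members of $\CONFIG$ will be the row-$X^1$ parts of the cliques of $M$.

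Next I would extend each $e\in\CONFIG$ to a copy of $K_k$. For each $e$ pick an injection $\sigma_e\colon\{2,\dots,k-1\}\to[r]\sm i(e)$, subject to the balance requirement that for every $i\in\{2,\dots,k-1\}$ and $j\in[r]$ there are exactly $D$ edges $e\in\CONFIG$ with $\sigma_e(i)=j$. This is the crux of the proof: it is a balanced-assignment problem whose fractional relaxation is met by the uniform choice — for each row, splitting the $c_A$ index-$A$ edges evenly over the $r-2$ columns of $[r]\sm i(e)$, which satisfies the column constraints because $\sum_{A\not\ni j}c_A=(r-2)D$ — and one passes to an integral choice of the $\sigma_e$ by a Hall-/flow-type argument (reducing index class by index class to the Birkhoff--von~Neumann theorem). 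Having fixed the $\sigma_e$, I would choose the vertices $v_{e,i}\in X^i_{\sigma_e(i)}$ greedily, in any order of the pairs $(e,i)$, so that $K_e:=e\cup\{v_{e,i}:2\le i\le k-1\}$ is a $K_k$ and the $K_e$ are pairwise vertex-disjoint: when choosing $v_{e,i}$, the two endpoints of $e$ and every previously chosen vertex of $K_e$ lie in a row and a column distinct from those of $X^i_{\sigma_e(i)}$ (the column since $\sigma_e(i)\notin i(e)$ and $\sigma_e$ is injective, the row since these vertices are in rows $\ne i$), so by the diagonal-degree hypothesis at most $k\alpha n$ vertices of $X^i_{\sigma_e(i)}$ are excluded by non-adjacency; with the at most $D\le 2\gamma n$ already-used vertices of that block also excluded, fewer than $k\alpha n+2\gamma n<n$ vertices are ruled out and a choice survives. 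This is the same style of greedy argument used in Lemmas~\ref{splittable} and~\ref{theoremmatching} (and for $k=2$ the whole extension step is vacuous, with $M=\CONFIG$).

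Finally I would set $M:=\{K_e:e\in\CONFIG\}$ and check the conclusion. For $j\in[r]$, the vertices of $V(M)$ in $X^1_j$ are exactly the endpoints in column $j$ of the $\sum_{A\ni j}c_A=2D$ edges $e\in\CONFIG$ with $j\in i(e)$, so $M$ covers $2D=p_1D$ vertices of $X^1_j$; for $2\le i\le k-1$, $M$ covers exactly $D=p_iD$ vertices of $X^i_j$ by the balance of the $\sigma_e$. Also $r!\mid D$, hence $r!\mid n-D$. And $X^1\sm V(M)=X^1\sm\bigcup_{e\in\CONFIG}e$ is perfectly matched by $M'\sm\CONFIG$, which has $N_A-c_A=C$ edges of each index and is therefore the required balanced perfect matching of $G[X^1\sm V(M)]$. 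The main obstacle is the exact balancing of the $\sigma_e$: when $r=k$ each $\sigma_e$ is forced to be a bijection onto $[r]\sm i(e)$, leaving no room to repair an only approximately balanced assignment, so one must produce an exactly balanced one directly; everything else reduces to greedy clique-building on top of the given matching $M'$ and the diagonal-degree condition.
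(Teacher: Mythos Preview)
Your proof is correct and follows essentially the same strategy as the paper's: split $M'$ into a balanced part of size $\binom{r}{2}C$ (your $M'\setminus\CONFIG$, the paper's $M_0$) and a surplus $\CONFIG$ of size $rD$ covering $2D$ vertices in each $X^1_j$, then extend each edge of $\CONFIG$ through the remaining rows so that exactly $D$ vertices are hit in every block $X^i_j$, $i\ge 2$.

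The one place where the paper is sharper than your proposal is precisely what you flag as ``the main obstacle'': the existence of the balanced assignments $\sigma_e$. Your appeal to Birkhoff--von~Neumann ``index class by index class'' is vague, and it is not obvious how to decouple the injectivity of each $\sigma_e$ from the per-column count constraints across all rows simultaneously. The paper sidesteps this by building the assignment one row at a time, interleaved with the vertex choices: having already extended to a $K_{i+1}$-packing $M_i$ covering $2D$ vertices in each $X^1_j$ and $D$ in each $X^{i'}_j$ for $2\le i'\le i$, it forms the bipartite graph between the cliques of $M_i$ and the set $[r]\times[D]$, joining $K'$ to $(j,q)$ whenever $j\notin i(K')$. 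A one-line count shows this graph is $(r-i-1)D$-regular on both sides (each $K'$ avoids $r-i-1$ columns; each column $j$ meets exactly $(i+1)D$ cliques of $M_i$), so a perfect matching exists by K\"onig's theorem, and this matching tells you which column each clique extends into at step $i+1$. This is exactly the Hall-type argument you gesture at, made clean; once you see it, the injectivity of $\sigma_e$ comes for free from the row-by-row construction rather than needing to be enforced globally.
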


\proof
Note that $|M'|=rn$, and since $M'$ is $\gamma$-balanced, $|N_A(M')-N_B(M')| \le \gamma rn/\binom{r}{2}$ for any $A,B \in \binom{[r]}{2}$. So we may write $M' = M_0 \cup M_1$, where~$M_0$ is a balanced matching in $G[X^1]$, $r \cdot r!$ divides $|M_0|$, and $|M_1| \leq 2\gamma rn$. Note that $M_0$ covers $2|M_0|/r$ vertices in each $X^1_j$, $j \in [r]$, so $M_1$ covers the remaining $2D$ vertices in each $X^1_j$, where $D := n - |M_0|/r$. Note also that $|M_1| = Dr$, $D \leq 2 \gamma n$, and $r!$ divides $|M_0|/r = n-D$. We will construct a sequence $M_1,\dots,M_{k-1}$, where $M_i$ is a $K_{i+1}$-packing in $G[\bigcup_{i' \in [i]} X^{i'}]$ that covers $2D$ vertices in $X^1_j$ and $D$ vertices in $X^{i'}_j$ for each $2 \le i' \le i$ and $j \in [r]$. Each $M_i$ will have size $|M_i|=Dr$, and will be formed by adding a vertex of $X^i$ to each copy of $K_i$ in $M_{i-1}$.

Suppose that we have formed $M_i$ in this manner for some $i \geq 1$, and now we wish to form~$M_{i+1}$. Let $Z$ be the set of ordered pairs $(j,q)$ with $j \in [r]$ and $q \in [D]$. We form a bipartite graph $B$ whose vertex classes are $M_i$ and $Z$, where a copy $K'$ of $K_{i+1}$ in $M_i$ and a pair $(j,q)$ are connected if $j \notin i(K')$, that is, if $K'$ contains no vertex from $X_j$. Note that any $K'$ in $M_i$ has degree $(r-i-1)D$ in $B$, as there are $r-i-1$ choices for $j \in [r] \sm i(K')$ and $D$ choices for $q \in [D]$. The same is true of any pair $(j,q)$ in $Z$, as $|M_i|=Dr$, and $(i+1)D$ cliques in $M_i$ intersect $X_j$. So $B$ is a regular bipartite graph, and therefore contains a perfect matching. Let $f:M_i\to Z$ be such that $\{K'f(K'): K' \in M_i\}$ is a perfect matching in $B$. For each $K' \in M_i$ we extend $K'$ to a copy of $K_{i+2}$ in $G$ by adding a vertex from $X^{i+1}_j$, where $f(K')=(j,q)$ for some $q \in [D]$. Such extensions may be chosen distinctly, since any $K' \in M_i$ has at least $n - |K'|\alpha n \geq n/2 \geq D$ possible extensions to $X^{i+1}_j$ for any $j \notin i(K')$. Since every pair in $Z$ was matched to some $K' \in M_i$, the $K_{i+2}$-packing $M_{i+1}$ covers precisely $D$ vertices from $X^{i+1}_j$ for each $j \in [r]$.

At the end of this process we obtain a $K_k$-packing $M := M_{k-1}$ in $G$ which covers $2D$ vertices from $X^1_j$ and $D$ vertices from $X^i_j$ for $i \geq 2$ and $j \in [r]$. By construction $M_0$ is a balanced perfect $K_{p_i}$-packing in $G[X^1 \sm V(M)]$ and $r!$ divides $|M_0|/r = n - D$, as required. 
\endproof

\section{Covering bad vertices} \label{sec:proof2} 
The final ingredient of the proof is a method to cover `bad' vertices. The row-decomposition of $G$ will have high minimum diagonal density, which implies that most vertices have high minimum diagonal degree. However, we need to remove those `bad' vertices which do not have high minimum diagonal degree, so that we can accomplish step (iii) of the proof outline in Section~\ref{sec:outline}, which is to glue together the perfect clique packings in each row to form a perfect $K_k$-packing in $G$. Lemma~\ref{diagonalmindeg} will show that we can cover the bad vertices of $G$ by vertex-disjoint copies of $K_k$, whilst keeping the block sizes balanced and fixing the parity of any pair-complete row, so that each row contains a clique packing covering all of the undeleted vertices. First we need some standard definitions. An $s \times r$ \emph{rectangle} $R$ is a table of $rs$ cells arranged in $s$ rows and $r$ columns. We always assume that $s \le r$. A \emph{transversal} $T$ in $R$ is a collection of $s$ cells of the grid so that no two cells of $T$ lie in the same row or column. We need the following simple proposition. 

\begin{prop} \label{transversal}
Suppose that $R$ is an $s \times r$ rectangle, where $r \ge s \ge 0$ and $r \ge 1$. Suppose that at most $r$ cells of $R$ are coloured, such that at most one cell is coloured in each column, and at most $r-1$ cells are coloured in each row. Then $R$ contains a transversal of non-coloured cells.
\end{prop}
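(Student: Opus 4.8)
The plan is to prove this by induction on $s$, using a Hall-type argument for the inductive step. The case $s=0$ is vacuous (the empty transversal works), and the case $s = 1$ is immediate since at most $r-1$ cells are coloured in the single row, leaving at least one non-coloured cell in that row to form a transversal. So suppose $s \geq 2$ and the statement holds for smaller values of $s$.

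For the inductive step I would proceed as follows. Pick the first row; it contains at most $r-1$ coloured cells, hence at least one non-coloured cell, say in column $c$. I want to place a transversal cell in row $1$ and then delete that row and column $c$ and apply induction to the remaining $(s-1) \times (r-1)$ rectangle $R'$. The point is that $R'$ has $r-1$ columns and $s-1$ rows with $s-1 \le r-1$, still at most one coloured cell per column (deleting a column only helps), and I need at most $r-2$ coloured cells per row of $R'$. The latter might fail: deleting column $c$ removes at most one coloured cell, so a row that had exactly $r-1$ coloured cells could still have $r-1$ of them if its coloured cell in column $c$ was not among them — wait, every row has at most $r-1$ coloured cells out of $r$ columns, so after deleting column $c$ it has at most $r-1$ coloured cells out of $r-1$ remaining columns, which could mean the entire row of $R'$ is coloured. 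To avoid this, I must choose the transversal cell in row $1$ more carefully, namely in a column $c$ such that every other row of $R'$ still has a non-coloured cell.

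The cleaner approach is to count globally. There are at most $r$ coloured cells in total. A row of $R'$ (i.e.\ a row other than the first) becomes entirely coloured only if it had exactly $r-1$ coloured cells and its unique non-coloured cell lies in column $c$. Since at most one cell is coloured per column, and there are at most $r$ coloured cells among $r$ columns, at most... actually let me argue directly: suppose for contradiction that for \emph{every} choice of non-coloured cell $(1,c)$ in the first row, some other row becomes fully coloured in $R'$. Each "bad" column $c$ (one whose deletion fully colours some other row) is the unique non-coloured cell of some row with $r-1$ coloured cells; such a row uses $r-1$ distinct columns' worth of coloured cells. If there were two distinct non-coloured cells $(1,c)$ and $(1,c')$ in row $1$ that are both bad, witnessed by rows $i$ and $i'$ (necessarily $i \neq i'$, since row $i$ has its non-coloured cell in column $c$ and row $i'$ in column $c' \neq c$), then rows $1, i, i'$ together contain at least $(r-1) + (r-1) = 2r-2 \geq r$ coloured cells (from rows $i$ and $i'$ alone, since $r \geq 2$), but if $2r - 2 > r$, i.e.\ $r > 2$, this already exceeds the budget of $r$ coloured cells, contradiction; and if $r = 2$ then $s = 2$ and one checks the small case by hand. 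Hence at most one non-coloured cell in row $1$ is bad, so if row $1$ has at least two non-coloured cells (which holds when it has at most $r-2$ coloured cells) we can pick a good one. The remaining worry is when row $1$ has exactly $r-1$ coloured cells and only one non-coloured cell, and that cell is bad; but then I should instead apply the same reasoning starting from a different row — specifically, I claim not all rows can simultaneously be "tight and bad", again by the counting bound $r$ on total coloured cells versus $s(r-1)$ needed if every row were tight, which is impossible for $s \geq 2$ unless $r \leq 2$.

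The main obstacle is organizing this counting argument cleanly so that the induction goes through without an annoying proliferation of cases; I expect the cleanest writeup picks, among all rows, one with the \emph{fewest} coloured cells, shows it has a non-coloured cell whose column-deletion keeps every other row from becoming fully coloured (using that the total coloured count is at most $r$ while a fully-coloured-after-deletion row needs $r-1$ coloured cells), removes that row and column, and invokes the inductive hypothesis on the $(s-1)\times(r-1)$ subrectangle. A short separate check handles $r \le 2$ (where necessarily $s \le 2$). This is essentially a defect-version of Hall's theorem for transversals, and the budget inequality $r < s(r-1)$ for $s \ge 2$, $r \ge 2$ (with equality only at $s = r = 2$, handled directly) is what makes it work.
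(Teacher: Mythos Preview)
Your proof is correct and uses the same inductive framework as the paper, but with one tactical reversal worth noting: you propose removing a row with the \emph{fewest} coloured cells and then selecting the column carefully, whereas the paper removes a row with the \emph{most} coloured cells, after which \emph{any} non-coloured cell in that row suffices. The paper's choice is cleaner on both verifications: once the row with the most coloured cells is gone, no remaining row can have $r-1$ coloured cells (otherwise two rows would each have at least $r-1$, giving $\geq 2(r-1) > r$ total for $r \geq 3$), so the per-row bound on $R'$ is automatic with no column selection needed; and if $R$ had any coloured cells at all, the removed row contained at least one of them, so the total bound $\leq r-1$ on $R'$ is immediate too. Your route works but requires the extra step of bounding the number of bad columns by one and showing the chosen row has at least two non-coloured cells to spare --- essentially the same counting inequality $2(r-1) > r$, just deployed differently.
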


\begin{proof}
We proceed by induction on $s$. Note that the proposition is trivial for the cases $s=0,1$ and $r=s=2$. Now assume that $r \geq 3$ and $s \geq 2$. Choose a row with the most coloured cells, and select a non-coloured cell in this row (this is possible since at most $r-1$ of the $r$ cells in this row are coloured). Let $R'$ be the $(s-1) \times (r-1)$ subrectangle obtained by removing the row and column containing this cell. Then it suffices to find a transversal of non-coloured cells in $R'$. Note that $R'$ has at most $r-1$ coloured cells, as if $R$ had any coloured cells, then we deleted at least one coloured cell. Also, since $R$ had at most $r < 2(r-1)$ coloured cells, $R'$ must have at most $r-2$ cells coloured in any row, since we removed the row containing the most coloured cells. Since $R'$ contains at most one coloured cell in any column, the required transversal in $R'$ exists by the induction hypothesis.  
\end{proof} 

The remainder of this section is occupied by the proof of Lemma~\ref{diagonalmindeg}. This is long and technical, so to assist the reader we first give an overview. We start by imposing structure on the graph $G$, fixing a row-decomposition with high minimum diagonal density. Let $X^i_j$, $i \in [s]$, $j \in [r]$ be the blocks of this row-decomposition. Then we identify vertices that are `bad' for one of two reasons: (i) not having high minimum diagonal degree, or (ii) belonging to a pair-complete row and not having high minimum degree within its own half. We assign each bad vertex $v$ to a row in which it has as many blocks as possible that are `bad' for $v$, in that they have many non-neighbours of $v$, so that there are as few bad blocks for $v$ as possible in the other rows. We also refer to the resulting sets $W^i_j$, $i \in [s]$, $j \in [r]$ as blocks, as although they may not form a row-decomposition, since few vertices are moved they retain many characteristics of a row-decomposition. If the row-decomposition has type $p=(p_i:i \in [s])$ then $|W^i|$ is approximately proportional to $p_i$ for $i \in [s]$. We establish some properties of the $W^i_j$'s in Claim \ref{partprops}. Next, we show how to find various types of $K_k$ in $G$ that will form the building blocks for the deletions. These will be `properly distributed', in that they have $p_i$ vertices (the `correct number') in row $i$ for $i \in [s]$, or `$ij$-distributed' for some $i,j$, in that they have `one too many' vertices in row $i$ and `one too few' vertices in row $j$. They will also have an even number of vertices in each half of pair-complete rows so as to preserve parity conditions, except that sometimes we require a clique that is `properly-distributed outside of row $\ell$' to fix the parity of a pair-complete row $\ell$.  Claim~\ref{generalexpansion} analyses a general greedy algorithm for finding copies of $K_k$, and then Claim~\ref{choosekk} deduces five specific corollaries on finding building blocks of the above types. In Claim~\ref{balancingrows} we use $ij$-distributed cliques to balance the row sizes, so that the remainder of row $\ell$ has size proportional to $p_\ell$ for $\ell \in [s]$. There are two cases according to whether $G$ has the same row structure as the extremal example; if it does we also ensure in this step that the remainder of each half in pair-complete rows has even size. Next, in Claim~\ref{preparing} we put aside an extra $K_k$-packing that is needed to provide flexibility later in the case when there are at least two rows with $p_i \ge 2$. Then in Claim~\ref{coverdiv} we cover all remaining bad vertices and ensure that the number of remaining vertices is divisible by $rk \cdot r!$. Next, in Claim~\ref{balancingcolumns} we choose a $K_k$-packing so that equally many vertices are covered in each part $V_j$. Then in Claim~\ref{balancingblocks} we choose a final $K_k$-packing so that the remaining blocks all have size proportional to their row size. After deleting all these $K_k$-packings we obtain $G'$ with an $s$-row-decomposition $X'$ that satisfies conclusion (i) of Lemma \ref{diagonalmindeg}.  To complete the proof, we need to satisfy conclusion (ii), by finding a balanced perfect $K_{p_i}$-packing in row $i$ for each $i \in [s]$. We need to consider two cases according to whether or not there are multiple rows with $p_i \ge 2$; if there are, then we may need to make some alterations to the $K_k$-packing from Claim~\ref{preparing}. Finally, we apply the results of the previous section to find the required balanced perfect clique packings.

\begin{lemma} \label{diagonalmindeg}
Suppose that $1/n^+ \ll \alpha \ll 1/r$, $r \geq k \geq 3$ and $r > 3$. Let $\Part'$ partition a set $V$ into $r$ parts $V_1, \dots, V_r$ each of size $n^+$, where $rn^+/k$ is an integer. Suppose that $G$ is a $\Part'$-partite graph on $V$ with $\delta^*(G) \geq (k-1)n^+/k$. Suppose also that if $rn^+/k$ is odd and $k$ divides $n^+$ then $G$ is not isomorphic to the graph $\Gamma_{n^+, r, k}$ of Construction~\ref{fischereg}.
We delete the vertices of a collection of pairwise vertex-disjoint copies of $K_k$ from $G$ to obtain $V'_1, \dots, V'_r$, $V' := \bigcup_{j \in [r]} V'_j$ and $G' = G[V']$, such that $|V'_j| = kn'$ for $j \in [r]$, where $r! \mid n'$ and $n' \ge n^+/k - \alpha n$. We can perform this deletion so that $G'$ has an $s$-row-decomposition $X'$, with blocks $X'{}^i_j$ for $i \in [s]$, $j \in [r]$ of size $p_in'$, where $p_i \in [k]$ with $\sum_{i \in [s]} p_i = k$, with the following properties:
\begin{enumerate}[(i)]
\item For each $i, i' \in [s]$ with $i \neq i'$ and $j, j' \in [r]$ with $j \neq j'$, any vertex $v \in X'{}^i_j$ has at least $p_{i'}n' - \alpha n'$ neighbours in~$X'{}^{i'}_{j'}$.
\item For every $i \in [s]$ the row $G[X'{}^i]$ contains a balanced perfect $K_{p_i}$-packing.
\end{enumerate}
\end{lemma}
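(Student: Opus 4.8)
The plan is to follow the overview given above; here I describe the steps in a little more detail and flag where the real work lies.

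\textbf{Step 1: imposing structure and locating bad vertices.} After a preliminary deletion of a bounded number of copies of $K_k$ to arrange that the vertex classes have size divisible by $k$, I would apply Proposition~\ref{iterate} to fix an $s$-row-decomposition $X^i_j$ of $G$ with minimum diagonal density at least $1-k^2 d_{s-1}$ in which no row $G[X^i]$ is $d_s$-splittable, where $d_0 \ll \cdots \ll d_k \ll 1/r$ are suitably small. By~(\ref{eq:mindeg}), $\delta^*(G[X^i])$ is close to $(p_i-1)(n^+/k)$, so Lemma~\ref{splittable} applies to every row. Call a row \emph{exceptional} if $p_i=2$ and $G[X^i]$ is (close to) pair-complete, and fix a halving of each block of each exceptional row witnessing this. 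Next I would single out the \emph{bad} vertices: those with too many non-neighbours in some block in a different row and column, and (if in an exceptional row) those with too many non-neighbours in their own half in some block in a different column. The minimum diagonal density forces few bad vertices per block. I would then reassign each bad vertex to a row maximising the number of blocks that contain many of its non-neighbours, obtaining modified sets $W^i_j$; since only a few vertices move, these retain the essential features of a row-decomposition (with $|W^i|$ roughly proportional to $p_i$), while every vertex now has boundedly many ``bad blocks'' outside its own row.

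\textbf{Step 2: building blocks.} The core of the argument is a supply of copies of $K_k$ in $G$ with prescribed distribution across rows and prescribed parity across the halves of exceptional rows. Using $\delta^*(G) \ge (k-1)n^+/k$ together with the structural information just obtained, I would analyse the greedy procedure that builds a $k$-clique one vertex at a time, each new vertex drawn from a prescribed block: as long as the partial clique has not already overfilled any block there are linearly many choices at each step. From this I would extract many \emph{properly distributed} cliques (exactly $p_i$ vertices in row $i$), many \emph{$ij$-distributed} cliques (one extra vertex in row $i$, one fewer in row $j$), and the parity variants needed later — in particular cliques ``properly distributed outside row $\ell$'' that can flip the parity of an exceptional row $\ell$.

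\textbf{Step 3: sequential correction and conclusion.} With these building blocks I would delete $K_k$-packings in rounds, each fixing one invariant without disturbing those already fixed: (1) use $ij$-distributed cliques to make each $|W^i|$ exactly proportional to $p_i$, and --- when $G$ has the same row structure as $\Gamma_{n^+,r,k}$ --- simultaneously make each half of each exceptional row have even size; (2) if at least two rows have $p_i\ge 2$, set aside a small $K_k$-packing for later flexibility; (3) cover all remaining bad vertices by properly distributed cliques and arrange that the number of remaining vertices is divisible by $rk\cdot r!$; (4) balance the columns, so equally many vertices have been deleted from each $V_j$; (5) balance the blocks within each row. The remaining graph $G'$ then has a genuine $s$-row-decomposition $X'$ with blocks of size $p_i n'$, $r!\mid n'$, and by construction every vertex has at least $p_{i'}n'-\alpha n'$ neighbours in each block of a different row and column, giving~(i). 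For~(ii) I would apply Lemma~\ref{theoremmatching} (or Corollary~\ref{findmatching}) to the non-exceptional rows, Lemma~\ref{paircompletematching} to the exceptional rows (whose halves are now even), and Proposition~\ref{onepcrow} in the degenerate case of a single row with $p_i\ge 2$ lacking a balanced perfect matching; when there are at least two such rows the packing set aside in (2) is altered to repair any failure of the hypothesis of Lemma~\ref{theoremmatching}(ii).

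\textbf{Main obstacle.} The real difficulty is the simultaneous bookkeeping in Step 3: the deletions must leave every block with the correct size (divisible by $r!$ and proportional to $p_i$), cover every bad vertex, keep every half of every exceptional row of even size, and still leave enough slack for the matching results row by row. Preventing these constraints from clashing is what dictates the careful ordering of the rounds, and the one genuinely unavoidable clash --- the parity of the halves of a pair-complete row when $G$ has exactly the extremal row structure --- is precisely the graph $\Gamma_{n^+,r,k}$ excluded from the hypothesis, so this is where the exceptional case of the lemma is forced. A subsidiary technical point is to check, via Proposition~\ref{robustness}, that the perturbed blocks $W^i_j$ stay close enough to a row-decomposition for Lemma~\ref{splittable} and Lemma~\ref{split} to remain applicable after the perturbation, and that the greedy clique-finding in Step 2 never runs out of room.
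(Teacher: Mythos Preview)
Your proposal is correct and follows essentially the same approach as the paper: the same iterated row-decomposition via Proposition~\ref{iterate}, the same identification and reassignment of bad vertices into blocks $W^i_j$, the same greedy clique-extension analysis yielding properly- and $ij$-distributed building blocks, and the same sequence of five corrective $K_k$-packings (balancing rows, setting aside flexibility, covering bad vertices with divisibility, balancing columns, balancing blocks), followed by the row-by-row application of Lemmas~\ref{theoremmatching}, \ref{paircompletematching} and Proposition~\ref{onepcrow}. You have also correctly located the one step needing the exact degree hypothesis (balancing rows in the extremal row structure) and the role of the excluded graph $\Gamma_{n^+,r,k}$.
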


\proof
Introduce new constants with $1/n \ll d_0 \ll d_1 \ll \dots \ll d_k \ll \alpha$. Let $n$ be the integer such that $n^+-k+1 \le kn \le n^+$. Note that $kn-\delta^*(G^+) \leq \bfl{n^+/k} = n$, so 
\textno any vertex has at most $n$ non-neighbours in any other part. &(\dagger)

Let $X$ be formed from $V$ by arbitrarily deleting $n^+ - kn$ vertices in each part. We fix an $s$-row-decomposition of $G_1 := G[X]$. Recall that this consists of pairwise disjoint blocks $X^i_j \sub V_j$ with $|X^i_j| = p_i n$ for each $i \in [s]$, $j \in [r]$, for some $s \in [k]$ and positive integers $p_i$, $i \in [s]$ with $\sum_{i \in [s]} p_i = k$. By Proposition~\ref{iterate} we may fix this row-decomposition to have minimum diagonal density at least $1-k^2d_{s-1}$, and such that each row $G_1[X^i]$ is not $d_s$-splittable. Having fixed $s$ and the row-decomposition of $G_1$, introduce new constants with $d_{s-1} \ll d_0' \ll d_1' \ll \dots \ll d_{2s+2}' \ll d_{s}$. For each $i \in [s]$ with $p_i = 2$, let $d(i)$ denote the infimum of all $d$ such that $G_1[X^i]$ is $d$-pair-complete. This gives us at most $s$ values of $d(i)$, so we may choose $t \in [2s+2]$ such that there is no $i \in [s]$ with $d'_{t-2} < d(i) \leq d'_t$. We let $d := d'_{t-1}$, $d' := d'_t$, and introduce further new constants $\nu, \eta, \beta, \beta', \zeta, \gamma, \gamma', d''$ and~$\omega$ such that
$$1/n^+ \ll \nu \ll \eta \ll d \ll \beta \ll \beta' \ll \zeta \ll \gamma \ll \gamma' \ll d'' \ll d' \ll \omega, \alpha \ll 1/r \leq 1/k.$$ 
These are the only constants which we shall use from this point onwards. The purpose of these manipulations is that our fixed row-decomposition has three important properties. Firstly, it has minimum diagonal density at least $1-d$, since $d = d_{t-1}' \geq k^2d_{s-1}$. Secondly, any row $G_1[X^i]$ is not $d'$-splittable, since $d' = d'_t \leq d_s$. Thirdly, any row $G_1[X^i]$ which is $d'$-pair-complete is $d$-pair-complete, since $d(i) \leq d' = d'_t$ by definition of $d(i)$, and so $d(i) \leq d'_{t-2} < d'_{t-1} = d$ by choice of $t$. 

Suppose first that $s = 1$, so $G_1$ has only one row $X^1$, and $p_1 = k$. Fix $n - r! \leq n' \leq n$ such that $r! \mid n'$, and let $C = n^+ - kn'$. Then $0 \leq C \leq kr! + k$. Also, since $rn^+/k$ and $r(kn')/k$ are integers, $rC/k$ is also an integer. We choose $rC/k$ pairwise-disjoint copies of $K_k$ in $G$ which together cover $C$ vertices in each $V_j$. To see that this is possible note that, by $(\dagger)$, for any $A \in \binom{[r]}{k}$ we may greedily choose the vertices of a copy of $K_k$ in $G$ of index $A$; this gives at least $n$ choices for each vertex, of which at most $n/2$ (say) have been previously used, so some choice remains. Note that our use of $(\dagger)$ here is not tight, in the sense that the argument would still be valid if $n$ was replaced by $n + o(n)$ in the statement of $(\dagger)$. This will be true of all our applications of $(\dagger)$ except for that in Claim~\ref{balancingrows}. We delete all of these copies of $K_k$ from $G$, and let $G'$ be the resulting graph. We let $X'{}^1_j$ consist of the $kn'$ undeleted vertices of $V_j$ for each $j \in [r]$. Then the sets $X'{}^1_j$ for $j \in [r]$ form a $1$-row-decomposition of $G'$, which is not $d''$-splittable by Proposition~\ref{robustness}. Since $p_1 = k \geq 3$, $G' = G'[X'{}^1]$ contains a balanced perfect $K_k$-packing by Lemma~\ref{theoremmatching}.

We may therefore assume that $s \geq 2$. From each block $X^i_j$ we shall obtain a set $W^i_j$ by moving a small number of `bad' vertices between blocks, and reinstating the vertices deleted in forming $X$. As a consequence the sets $W^i_j$ will not form a proper row-decomposition (for example, blocks in the same row may have different sizes). However, since only a small number of vertices will be moved or reinstated, the partition of $V(G)$ into sets $W^i_j$ will retain many of the characteristics of the $s$-row-decomposition of $G_1$ into blocks $X^i_j$. We therefore keep the terminology, referring to the sets $W^i_j$ as `blocks', and the $W^i = \bigcup_j W^i_j$ and $W_j = \bigcup_i W_j^i$ as `rows' and `columns' respectively. Perhaps it is helpful to think of the sets $W^i_j$ as being containers which correspond to the blocks $X^i_j$, between which vertices may be transferred. It is important to note, however, that the blocks $X^i_j$ will remain unchanged throughout the proof. Furthermore, we shall sometimes refer to the row $G[W^i]$ simply as row~$i$, but we say that a row $i$ is \emph{pair-complete} if $G_1[X^i]$ is $d$-pair-complete. This means that the truth of the statement `row $i$ is pair-complete' depends only on our fixed row-decomposition of $G_1$, and not on the `blocks' $W^i_j$ or their subsets defined later. Note that if $p_i=2$ and row $i$ is not pair-complete then $G[X^i]$ is not $d'$-pair-complete.

We start by identifying the {\em bad} vertices, which may be moved to a different block. For each $i \in [s]$ and $j \in [r]$ let $B^i_j$ consist of all vertices $v \in X^i_j$ for which there exist $i' \neq i$ and $j' \neq j$ such that $|N(v) \cap X^{i'}_{j'}| \leq (1-\sqrt{d})p_{i'}n$.  We must have $|B^i_j| \leq rk\sqrt{d} p_in$, otherwise for some $i' \neq i$ and $j' \neq j$ there are more than $\sqrt{d} p_in$ vertices in $X^i_j$ with at most $(1 - \sqrt{d})p_{i'}n$ neighbours in $X^{i'}_{j'}$. Then  $d(X^i_j,X^{i'}_{j'}) < \sqrt{d} \cdot (1 - \sqrt{d}) + (1 - \sqrt{d}) \cdot 1 = 1 - d$ contradicts the minimum diagonal density of $G_1$.

Next, for each $i \in [s]$ for which row $i$ is pair-complete, by definition there are sets $T{}_j^i \subseteq X_j^i$ of size $n$ for each $j \in [r]$ such that $d(T{}_j^i, T{}_{j'}^i) \geq 1-d$ and $d(X_j^i \sm T{}_j^i, X_{j'}^i \sm T{}_{j'}^i) \geq 1-d$ for any $j \neq j'$. For each $j \in [r]$, we let $B'{}^i_j$ consist of all vertices $v \in T{}^i_j$ for which there exists $j' \neq j$ such that $|N(v) \cap T{}^i_{j'}| \leq (1-\sqrt{d})n$, and also all vertices $v \in X^i_j \sm T{}^i_j$ for which there exists $j' \neq j$ such that $|N(v) \cap (X^i_{j'} \sm T{}^i_{j'})| \leq (1-\sqrt{d})n$. We must have $|B'{}^i_j| \leq 2r \sqrt{d} n$, otherwise (without loss of generality) there exists some $j' \neq j$ for which more than $\sqrt{d}n$ vertices in $T{}^i_j$ have at most $(1 - \sqrt{d})n$ neighbours in $T{}^{i}_{j'}$. Then $d(T{}^i_j,T{}^{i}_{j'}) < \sqrt{d} \cdot (1 - \sqrt{d}) + (1 - \sqrt{d}) \cdot 1 = 1 - d$ contradicts the choice of the sets $T^i_j$. Thus we have bad sets $B^i_j$ and $B'{}^i_j$ for $i \in [s]$, $j \in [r]$. We also consider the $n^+ - kn$ deleted vertices in each part to be bad. Let $B$ be the set of all bad vertices. The remaining vertices $Y$ are {\em good}; let $Y^i_j = X^i_j \sm B$, $Y^i = \bigcup_{j \in [r]} Y^i_j$ and $Y_j = \bigcup_{i \in [s]} Y^i_j$ for each $i$ and $j$, so $Y = \bigcup_{i \in [s]} Y^i$. 

Let $v$ be any vertex of $G$. We say that a block $X^i_j$ is \emph{bad with respect to $v$} if $|N(v) \cap X^i_j| < p_i n - n/2$, that is, if $v$ has more than $n/2$ non-neighbours in $X^i_j$. So if $v$ is a good vertex, then the set of blocks which are bad with respect to $v$ is a subset of the set of blocks in the same row and column as $v$. Also, by $(\dagger)$ for any $v \in V(G)$ at most one block in each other column can be bad with respect to $v$. Similarly as with the notion of pair-completeness, this definition fixes permanently which blocks are bad with respect to a vertex $v$. We shall later sometimes refer to a `block' $W^i_j$ being bad with respect to $v$; this should always be taken to mean that $X^i_j$ is bad which respect to $v$. We say that a block is \emph{good with respect to $v$} if it is not bad with respect to $v$.

We now define the sets $W^i_j$ for each $i \in [s]$ and $j \in [r]$ as follows. Any vertex in $Y^i_j$ is assigned to $W^i_j$. It remains only to assign the bad vertices; each bad vertex $v \in V_j$ is assigned to $W^i_j$, where $i$ is a row containing the most blocks $X^i_j$ which are bad with respect to $v$ (if more than one row has the most bad blocks then choose one of these rows arbitrarily). For each pair-complete row $i$, we also modify the sets $T{}^i_j$ to form sets $S^i_j$. Indeed, $S^i_j$ is defined to consist of all vertices of $T{}^i_j \cap Y^i_j$, plus any vertex in $W^i_j \sm Y^i_j$ which has at least $n/2$ neighbours in $T{}^i_{j'}$ for some $j' \neq j$. We let $S^i := \bigcup_{j \in [r]} S^i_j$ for any such $i$. This completes the phase of the proof in which we impose structure on $G$. The next claim establishes some properties of the decomposition into `blocks' $W^i_j$.

\begin{claim} \label{partprops} \textbf{(Structural properties)}
\begin{enumerate}[({A}1)]
\item At most $\beta n/2$ vertices of $G$ are bad.
\item We have $Y^i_j \subseteq W^i_j$ and $(p_i-\beta/2)n \leq |Y^i_j| \leq |W^i_j| \leq (p_i+\beta/2)n$ for any $i \in [s]$ and $j \in [r]$. Furthermore, if row $i$ is pair-complete then $|Y^i_j \cap S^i_j|, |Y^i_j \sm S^i_j| \geq n - \beta n/2$ for any $j \in [r]$.
\item Let $v \in Y^i_j$. Then $v$ has at most $\beta n$ non-neighbours in $W^{i'}_{j'}$ for any $i' \neq i$ and $j' \neq j$. Furthermore, if row $i$ is pair-complete and $j' \neq j$ then $v$ has at most $\beta n$ non-neighbours in $S^i_{j'}$ if $v \in S^i_j$, and at most $\beta n$ non-neighbours in $W^i_{j'} \sm S^i_{j'}$ if $v \notin S^i_j$.
\item Let $v \in W^i_j$. Then $v$ has at most $2n/3$ non-neighbours in any block $W^{i'}_{j'}$ which is good with respect to $v$. Furthermore, if row $i$ is pair-complete then there is some $j' \neq j$ such that $v$ has at most $2n/3$ non-neighbours in $S^i_{j'}$ if $v \in S^i_j$, and at most $2n/3$ non-neighbours in $W^i_{j'} \sm S^i_{j'}$ if $v \notin S^i_j$.
\item For any $i$ with $p_i \geq 2$, there are at least $\gamma' n^{p_i+1}$ copies of $K_{p_i+1}$ in $G[Y^i]$. Furthermore, if row $i$ is pair-complete then there are at least $\gamma' n^3$ copies of $K_3$ in $G[Y^i \cap S^i]$ and at least $\gamma' n^3$ copies of $K_3$ in $G[Y^i \sm S^i]$.
\end{enumerate}
\end{claim}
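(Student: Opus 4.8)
The plan is to prove (A1)--(A5) in order, all via the same principle: each `block' $W^i_j$ is obtained from the fixed block $X^i_j$ by deleting some vertices and adding at most $|B\cap V_j|$ new ones, and for pair-complete $i$ each $S^i_j$ is obtained from $T^i_j$ in the same way; so the density and degree information recorded for the $X^i_j$'s and $T^i_j$'s transfers to the $W^i_j$'s and $S^i_j$'s up to an additive error of $|B|$. For (A1) I would add up the estimates already in hand: summing $|B^i_j|\le rk\sqrt d\,p_in$ over all $i,j$, summing $|B'{}^i_j|\le 2r\sqrt d\,n$ over all $j$ and all pair-complete $i$, and adding the $r(n^+-kn)\le rk$ reinstated vertices, gives $|B|\le r^2k^2\sqrt d\,n+2sr^2\sqrt d\,n+rk$ (using $\sum_{i,j}p_i=rk$), which is below $\beta n/2$ since $d\ll\beta$ and $1/n^+\ll\beta$. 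Part (A2) is then immediate: $Y^i_j=X^i_j\sm B$ has $p_in-|B|\le|Y^i_j|\le p_in$, and $W^i_j$ adds at most $|B|$ further vertices of $V_j$; the pair-complete addendum follows from the set identities $Y^i_j\cap S^i_j=Y^i_j\cap T^i_j$ and $Y^i_j\sm S^i_j=Y^i_j\sm T^i_j=(X^i_j\sm T^i_j)\cap Y^i_j$ --- valid because every extra vertex put into $S^i_j$ lies in $W^i_j\sm Y^i_j$ --- together with $|T^i_j|=|X^i_j\sm T^i_j|=n$ and (A1).

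For (A3), a vertex $v\in Y^i_j$ is good, hence $v\notin B^i_j$, so it has fewer than $\sqrt d\,p_{i'}n\le\sqrt d\,kn$ non-neighbours in $X^{i'}_{j'}$ whenever $i'\ne i$ and $j'\ne j$; since $W^{i'}_{j'}\sub X^{i'}_{j'}\cup B$, adding $|B|\le\beta n/2$ (and using $\sqrt d\ll\beta$) leaves at most $\beta n$ non-neighbours of $v$ in $W^{i'}_{j'}$. The pair-complete addendum of (A3) is the same argument with $B^i_j$ replaced by $B'{}^i_j$, and with $X^{i'}_{j'}$ replaced by $T^i_{j'}$ when $v\in S^i_j$ (so $v\in T^i_j$) or by $X^i_{j'}\sm T^i_{j'}$ when $v\notin S^i_j$ (so $v\in(X^i_j\sm T^i_j)\cap Y^i_j$), using $S^i_{j'}\sub T^i_{j'}\cup B$ and $W^i_{j'}\sm S^i_{j'}\sub(X^i_{j'}\sm T^i_{j'})\cup B$. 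For the first part of (A4): if $W^{i'}_{j'}$ is good with respect to $v$ then by definition $v$ has at most $n/2$ non-neighbours in $X^{i'}_{j'}$, and $W^{i'}_{j'}\sub X^{i'}_{j'}\cup B$ adds only a further $|B|\le\beta n/2$, giving at most $2n/3$. For the pair-complete part of (A4), if $v$ is good then $v\in Y^i_j$ and the claim is the pair-complete part of (A3), so suppose $v$ is bad (and $v\in W^i_j$). If $v\in S^i_j$ then by the rule defining $S^i_j$ there is a $j'\ne j$ with $|N(v)\cap T^i_{j'}|\ge n/2$, so $v$ has at most $n/2$ non-neighbours in $T^i_{j'}$ and hence at most $n/2+|B|<2n/3$ in $S^i_{j'}\sub T^i_{j'}\cup B$. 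If $v\notin S^i_j$ then the rule fails for every $j'\ne j$, so $v$ has fewer than $n/2$ neighbours --- hence more than $n/2$ non-neighbours --- in $T^i_{j'}$; since $(\dagger)$ gives $v$ at most $n$ non-neighbours in $V_{j'}\supseteq X^i_{j'}$, fewer than $n/2$ of them lie in $X^i_{j'}\sm T^i_{j'}$, and adding $|B|$ bounds the non-neighbours of $v$ in $W^i_{j'}\sm S^i_{j'}\sub(X^i_{j'}\sm T^i_{j'})\cup B$ by $n/2+\beta n/2<2n/3$.

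For (A5), note that $(\dagger)$ gives $|N(v)\cap X^i_{j'}|\ge(n^+-n)-(n^+-p_in)=(p_i-1)n$ for $v\in X^i_j$, $j'\ne j$, so $\delta^*(G[X^i])\ge(p_i-1)n\ge(p_i-1)n-\eta n$; also $G_1[X^i]$ is not $d'$-splittable and, since $s\ge2$, $p_i\le k-1<r$. Thus Lemma~\ref{splittable}(ii), applied with $\eta,d'',d'$ in the roles of $\alpha,\beta,d$, yields at least $d''n^{p_i+1}$ copies of $K_{p_i+1}$ in $G[X^i]$; at most $|B|\cdot\binom{r-1}{p_i}(kn)^{p_i}=O(\beta n^{p_i+1})$ of these meet a bad vertex, and since $\beta\ll d''$ and $\gamma'\ll d''$ at least $\gamma'n^{p_i+1}$ remain in $G[Y^i]$. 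For the pair-complete addendum of (A5) I pick any three distinct columns (possible as $r>3$); by (A2) each set $Y^i_j\cap S^i_j$ has size at least $n-\beta n/2$, and by the pair-complete part of (A3) each of its vertices has at most $\beta n$ non-neighbours in the corresponding set of any other of these columns, so a greedy count produces at least $(n-3\beta n)^3\ge\gamma'n^3$ triangles in $G[Y^i\cap S^i]$; the same argument with the sets $Y^i_j\sm S^i_j$ gives the bound for $G[Y^i\sm S^i]$.

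I expect the main obstacle to be the pair-complete part of (A4) for a bad vertex $v$: there the estimate cannot be read off the bad sets $B^i_j,B'{}^i_j$, and one must instead combine the membership rule defining $S^i_j$ with the global bound $(\dagger)$, taking care that $(\dagger)$ passes correctly from the part $X^i_{j'}$ (size $2n$) to each of its two halves $T^i_{j'}$ and $X^i_{j'}\sm T^i_{j'}$ (size $n$). The only other delicate point is the constant bookkeeping in (A5): Lemma~\ref{splittable}(ii) must be invoked with a density parameter lying strictly between $\gamma'$ and $d'$ in the hierarchy (namely $d''$), so that its conclusion still gives $\ge\gamma'n^{p_i+1}$ cliques after the $O(\beta n^{p_i+1})$ cliques through bad vertices are discarded.
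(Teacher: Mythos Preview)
Your proof is correct and follows essentially the same approach as the paper's own proof: counting bad vertices for (A1), transferring density/degree bounds from the $X^i_j$'s and $T^i_j$'s to the $W^i_j$'s and $S^i_j$'s via the containments $W^{i'}_{j'}\subseteq X^{i'}_{j'}\cup B$ and $S^i_{j'}\subseteq T^i_{j'}\cup B$ for (A2)--(A4), and invoking Lemma~\ref{splittable}(ii) for (A5). The only cosmetic difference is that the paper applies Lemma~\ref{splittable}(ii) with $2\gamma'$ in the role of its $\beta$ (rather than your $d''$), but both choices sit correctly in the constant hierarchy and yield the same conclusion.
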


\proof For (A1), note that since there were at most $rk\sqrt{d}p_in + 2r\sqrt{d}n$ bad vertices in each $X^i_j$, the total number of bad vertices is at most $rk (rk^2\sqrt{d}n + 2r\sqrt{d}n) + rk \leq \beta n/2$. For (A2), note that $Y^i_j \sub W^i_j$, and any vertex of $X^i_j \sm Y^i_j$ or $W^i_j \sm X^i_j$ is bad. Since $|X^i_j| = p_in$, and there are at most $\beta n/2$ bad vertices by (A1), we conclude that (A2) holds. For (A3), note that since~$v$ is good we have $|X^{i'}_{j'} \sm N(v)| \leq \sqrt{d}p_{i'}n$ for any $i' \neq i$ and $j' \neq j$. Since $|W^{i'}_{j'} \sm X^{i'}_{j'}| \leq \beta n/2$ by (A1), we conclude that $|W^{i'}_{j'} \sm N(v)| \leq \beta n$, as required. Similarly, if row $i$ is pair-complete and $v \in S^i_j$, then $v \in T{}^i_j$, so $v$ being good implies that $|T{}^i_{j'} \sm N(v)| \leq \sqrt{d}n$ for any $j' \neq j$. On the other hand, if $v \in W^i_j \sm S^i_j$, then $v \in X^i_j \sm T{}^i_j$, so $v$ being good implies that $|(X^i_{j'} \sm T{}^i_{j'}) \sm N(v)| \leq \sqrt{d}n$ for any $j' \neq j$. Any non-neighbour of $v$ in $S^i_{j'} \triangle T{}^i_{j'}$ or $W^i_{j'} \sm X^i_{j'}$ must be a bad vertex; by (A1) this completes the proof of (A3).

Next, for (A4) suppose that $W^{i'}_{j'}$ is good with respect to $v$. Recall that this means $|X^{i'}_{j'} \sm N(v)| \leq n/2$. Since any vertex in $W^{i'}_{j'} \sm X^{i'}_{j'}$ is bad, we find that $|W^{i'}_{j'} \sm N(v)| \leq n/2 + \beta n/2 \leq 2n/3$ by (A1). So suppose now that row~$i$ is pair-complete. If $v$ is good, then the `furthermore' statement holds by (A3), so we may suppose that $v \in W^i_j \sm Y^i_j$. If $v \in S^i_j$ then by definition $|N(v) \cap T{}^i_{j'}| \geq n/2$ for some $j' \neq j$, so $|T{}^i_{j'} \sm N(v)| \leq n/2$; then $|S{}^i_{j'} \sm N(v)| \leq 2n/3$ by (A1), since any vertex in $S{}^i_{j'} \triangle T{}^i_{j'}$ is bad. On the other hand, if $v \notin S^i_j$ then by definition $|N(v) \cap T{}^i_{j'}| < n/2$, so $|T{}^i_{j'} \sm N(v)| > n/2$, for any $j' \neq j$. By $(\dagger)$ this implies $|(X^i_{j'} \sm T{}^i_{j'}) \sm N(v)| < n/2$, and so $|(W^i_{j'} \sm S{}^i_{j'}) \sm N(v)| \leq 2n/3$ by (A1).

Finally, for (A5) suppose first that row $i$ is pair-complete, so $p_i = 2$. Then by (A2) we have $|Y^i_1 \cap S^i_1|, |Y^i_2 \cap S^i_2|, |Y^i_3 \cap S^i_3| \geq n - \beta n/2$. Furthermore, by (A3) any vertex in one of these three sets has at most $\beta n$ non-neighbours in each of the other two sets. So we may choose vertices $v_1 \in Y^i_1 \cap S^i_1$, $v_2 \in Y^i_2 \cap S^i_2 \cap N(v_1)$ and $v_3 \in Y^i_3 \cap S^i_3 \cap N(v_1) \cap N(v_2)$ in turn with at least $n - 3\beta n$ choices for each vertex. We conclude that there are at least $n^3/2$ copies of $K_3$ in $G[Y^i \cap S^i]$. The same argument applied to the sets $Y^i_1 \sm S^i_1$ shows that there are at least $n^3/2$ copies of $K_3$ in $G[Y^i \sm S^i]$. On the other hand, if row $i$ is not pair-complete, then we simply wish to find at least $\gamma' n^{p_i+1}$ copies of $K_{p_i+1}$ in $G[Y^i]$. Since $G_1[X^i]$ is not $d'$-splittable, by Lemma~\ref{splittable}(ii) (with $2 \gamma'$ in place of $\beta$) there are at least $2\gamma' n^{p_i + 1}$ copies of $K_{p_i+1}$ in $G_1[X^i]$. By (A1) at most $\beta (p_in)^{p_i + 1} \leq \gamma' n^{p_i + 1}$ such copies contain a bad vertex; this leaves at least $\gamma' n^{p_i +1}$ copies of $K_{p_i +1}$ in $G[Y^i]$.
\endproof 

In the next claim we analyse a general greedy algorithm that takes some fixed clique $K''$ in which all but at most one vertex is good, and extends it to a copy of $K_k$ with prescribed intersections with the blocks, described by the sets $A_1,\dots,A_s$.

\begin{claim} \label{generalexpansion} \textbf{(Extending cliques)}
Let $K''$ be a clique in $G$ on vertices $v_1, \dots, v_m$, where $v_2, v_3, \dots, v_m$ are good. Suppose that $A_1, \dots, A_s \sub [r]$ are pairwise-disjoint sets such that for each $q \in [m]$ there is some $i \in [s]$ and $j \in A_i$ such that $v_q \in W^i_j$. Suppose also that for each $i \in [s]$ one of the following five conditions holds:
\begin{enumerate} [(a)]
\item for every $j \in A_i$, $W^i_j$ contains some $v_q$ with $q \in [m]$, 
\item $|A_i| \leq p_i$ and $V(K'') = \emptyset$, 
\item $|A_i| \leq p_i$ and there is some block $W^i_j$ with $j \in A_i$ which is good with respect to $v_1$ and does not contain a vertex of $K''$, 
\item $|A_i| \leq p_i$ and $v_1 \in W^i$, 
\item $|A_i| < p_i$.
\end{enumerate}
Let $a := \sum_{i \in [s]} |A_i|$. Then for any $b_1, \dots, b_s \in \{0,1\}$ there are at least $\omega n^{a-m}$ copies $K'$ of $K_a$ in $G[Y \cup \{v_1\}]$ which extend $K''$ and satisfy the following properties.
\begin{enumerate}[(i)]
\item $K'$ intersects precisely those $W^i_j$ with $i \in [s]$ and $j \in A_i$. 
\item For any pair-complete row $i$ such that $|A_i| = 2$ and $V(K'') \cap W^i =\emptyset$ we have that $|V(K') \cap S^i|$ is even.
\item Consider any pair-complete row $i$ such that $|A_i| = 1$ and write $\{j\} = A_i$. If $W^i_j$ is good with respect to $v_1$ and does not contain a vertex of $K''$ then $|V(K') \cap S^i| = b_i$.
\end{enumerate}
\end{claim}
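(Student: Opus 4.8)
# Proof Proposal for Claim~\ref{generalexpansion}

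\textbf{Overview of the approach.} The plan is to build the clique $K'$ by a greedy vertex-by-vertex extension of $K''$, choosing one new vertex for each index $j$ lying in $\bigcup_i A_i$ but not already hit by a vertex of $K''$. We process the rows $i \in [s]$ one at a time, and within each row we process the indices $j \in A_i$ in turn, so that at every step we have a partial clique and we wish to add one more vertex in some prescribed block $W^i_j$ (or a prescribed subset of it, in the pair-complete case). The two things to control throughout are: (1) that each new vertex is adjacent to all previously chosen vertices, which we guarantee using the non-neighbour bounds (A2)--(A4); and (2) that we always choose the new vertex inside $Y$ (so the extended clique stays inside $G[Y \cup \{v_1\}]$, with $v_1$ the only possibly-bad vertex), which costs us only a $\beta n/2$ additive loss by (A1)--(A2). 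The five cases (a)--(e) are exactly the hypotheses needed to guarantee that, when $|A_i| \le p_i$, we have enough ``room'' to choose all $|A_i|$ vertices in row $i$ despite the potential presence of $v_1$ and of already-committed vertices of $K''$ inside that row.

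\textbf{Key steps in order.} First I would fix the order of processing and set notation: let $J_0 \subseteq \bigcup_i A_i$ be the set of indices $j$ for which some $v_q \in W^i_j$ already (these need no new vertex), and for each remaining $j$ we will pick a new vertex $w_j \in W^i_j \cap Y$ adjacent to everything chosen so far. Second, I would verify the adjacency count: when choosing $w_j$ in block $W^i_j$, the previously-chosen vertices number at most $a - 1$; each good vertex among them has at most $\beta n$ non-neighbours in $W^i_j$ if $W^i_j$ is in a different row and column by (A3) --- and since $A_1,\dots,A_s$ are pairwise disjoint, any two blocks we touch in distinct rows are automatically in distinct columns, so (A3) applies to all pairs across rows --- while $v_1$ (the only possibly-bad vertex) contributes at most $2n/3$ non-neighbours provided $W^i_j$ is good with respect to $v_1$, by (A4). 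Third, within a single row $i$ with $|A_i| \le p_i$, I would handle the intra-row adjacencies: here two blocks $W^i_j, W^i_{j'}$ in the same row are in different columns, so (A3) still gives a $\beta n$ bound for good-to-good pairs; the subtle point is that $W^i_j$ might be bad with respect to $v_1$ if $v_1 \in W^i$, and this is where cases (b)--(d) are used. In case (b), $K''$ is empty so there is no $v_1$-constraint inside the row; in case (c) we can start the row at the designated good-with-respect-to-$v_1$ block and then every other block in the row is reached from good vertices only; in case (d), $v_1 \in W^i$ means $v_1$ occupies one of the $p_i$ ``slots'' of row $i$, so $|A_i| \le p_i$ still leaves room --- and within row $i$, $v_1$ being in $W^i$ doesn't create a bad-block problem because we only need $w_j$ adjacent to $v_1$ when $j \ne$ the column of $v_1$, and then $W^i_j$ good-with-respect-to-$v_1$ follows since $v_1 \in W^i_j{}'$ can only have a bad block in its own column (by $(\dagger)$-type reasoning, as blocks bad w.r.t.\ $v_1$ lie in $v_1$'s own row and column). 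Case (e), $|A_i| < p_i$, gives automatic slack. Case (a) is the situation where every block of row $i$ in $A_i$ is already occupied, so nothing is chosen in row $i$ at all. Fourth, I would count: at each of the at most $a - m$ choices we have at least $n - (a-1)\beta n - 2n/3 - \beta n/2 \ge n/4$ valid vertices (bounding generously with $a \le k$, $\beta \ll 1/k$), giving at least $(n/4)^{a-m} \ge \omega n^{a-m}$ completions, since $\omega \ll 1/r \le 1/k$.

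\textbf{The parity conditions (ii) and (iii).} For a pair-complete row $i$ with $|A_i| = 2$ and $V(K'') \cap W^i = \emptyset$: I would choose both new vertices of row $i$ inside the \emph{same} half, say both in $S^i$, so $|V(K') \cap S^i| = 2$ is even. This is possible because $|Y^i_j \cap S^i_j| \ge n - \beta n/2$ by (A2), and a vertex of $S^i_j \cap Y^i_j$ has at most $\beta n$ non-neighbours in $S^i_{j'}$ by (A3); the adjacency to the rest of $K'$ costs the usual $(a-1)\beta n$, all still leaving $\gg \omega n$ choices, so the count in step four is unaffected. (If instead $V(K'') \cap W^i = \emptyset$ fails we are in case (a) or the row is not constrained this way.) For a pair-complete row $i$ with $|A_i| = 1$, say $A_i = \{j\}$, $W^i_j$ good w.r.t.\ $v_1$ and not containing a $K''$-vertex: to force $|V(K') \cap S^i| = b_i$, I pick the single new vertex $w_j$ in $S^i_j \cap Y^i_j$ if $b_i = 1$ and in $(W^i_j \sm S^i_j) \cap Y^i_j$ if $b_i = 0$. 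Both subsets have size $\ge n - \beta n/2$ by (A2), and we need $w_j$ adjacent to the chosen vertices of $K'$ in other rows/columns (good-to-good, $\beta n$ each) plus possibly to $v_1$; since $W^i_j$ is good with respect to $v_1$ we get the $2n/3$ bound from (A4), so again $\ge n/4$ choices remain --- note $V(K') \cap W^i = \{w_j\}$ since $A_i = \{j\}$ and no $K''$-vertex lies in $W^i$, hence $|V(K') \cap S^i|$ equals $1$ or $0$ exactly as chosen.

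\textbf{Main obstacle.} The delicate part is the bookkeeping inside a pair-complete row when $v_1$ lands in that row: one must check that ``$|A_i| \le p_i = 2$'' together with $v_1 \in W^i$ (case (d)) genuinely leaves enough room, and that the block we choose for the parity-controlled vertex in (iii) is good with respect to $v_1$. The paper's hypothesis in (iii) explicitly demands ``$W^i_j$ good w.r.t.\ $v_1$ and not containing a $K''$-vertex,'' so this is handed to us; the real care is in case (c) versus (d), making sure that in case (d) every column $j \in A_i$ we must fill is \emph{automatically} good with respect to $v_1$ because $v_1 \in W^i_{j_0}$ with $j_0$ in $v_1$'s own column, and a block bad w.r.t.\ $v_1$ lies in $v_1$'s own row and column, so for $j \ne j_0$ in row $i$ the block $W^i_j$ is good w.r.t.\ $v_1$. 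Once that observation is in place, every choice reduces to the uniform ``$\ge n/4$ options'' estimate and the claim follows.
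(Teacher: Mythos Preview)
Your overall strategy (greedy extension, one vertex per unoccupied block) matches the paper's, but there is a genuine gap in the counting that breaks the argument. You claim that (A3) gives a $\beta n$ non-neighbour bound for good-to-good pairs in the \emph{same} row (``two blocks $W^i_j, W^i_{j'}$ in the same row are in different columns, so (A3) still gives a $\beta n$ bound''). This is false: (A3) explicitly requires $i' \neq i$, i.e.\ it only controls non-neighbours across \emph{different} rows. Within a row the only available bound is $(\dagger)$, which gives up to $n$ non-neighbours per vertex. So when you are choosing the $t$th vertex in row $i$, the $t-1$ vertices already placed in row $i$ may each exclude up to $n$ candidates, not $\beta n$. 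Consequently your count ``$n - (a-1)\beta n - 2n/3 - \beta n/2 \ge n/4$'' is wrong on two fronts: the target block has size roughly $p_i n$ (not $n$), and the same-row exclusions contribute $(p_i - 1)n$ (not $(p_i - 1)\beta n$). The correct baseline is
\[
|Y^i_j| - |P^i|\cdot n - |N'| - r\beta n \;\ge\; p_i n - (p_i - 1)n - n - (r+1)\beta n \;=\; -(r+1)\beta n,
\]
where $P^i$ denotes the previously-chosen vertices in row $i$ other than $v_1$, and $N' = Y^i_j \setminus N(v_1)$. This is negative, and the entire purpose of conditions (b)--(e) is to gain back at least $n/3$ from one of the two terms $|P^i|\cdot n$ or $|N'|$.

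Your case analysis also misfires. In case (d) you assert that blocks $W^i_j$ in $v_1$'s own row are automatically good with respect to $v_1$ because ``a block bad w.r.t.\ $v_1$ lies in $v_1$'s own row and column''; but that statement is only proved for \emph{good} vertices, and $v_1$ may be bad. The actual reason case (d) works is that $v_1 \in W^i$ uses up one of the $|A_i|$ slots, so $|P^i| \le |A_i| - 2 \le p_i - 2$, saving a full $n$ in the estimate above. Similarly, in case (c) the point is not to ``start'' at the good block but (after relabelling) to process it \emph{last}: for all earlier choices in row $i$ at least two blocks remain unfilled, so again $|P^i| \le p_i - 2$; for the final choice one has only $|P^i| \le p_i - 1$, but now the block is good with respect to $v_1$, giving $|N'| \le 2n/3$ instead of $n$. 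You should rework the argument around this same-row-versus-cross-row distinction.
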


\proof
If $V(K'') \ne \es$, then by relabelling the columns $W_j$ if necessary, we may assume that for any $i \in [s]$ for which (c) holds, the block $W^i_{\max A_i}$ is good with respect to $v_1$ and does not contain a vertex of $K''$. We also note for future reference that the only properties of good vertices used in the proof of this claim will be those in (A3).

First we define the greedy algorithm for extending $K''$ to $K'$, and then we will show that we have many choices for the vertex at each step of the algorithm. We proceed through each column $V_j$, $j \in [r]$ in turn. If $j$ is not in $A_i$ for any $i \in [s]$, then we take no action, since $K'$ will not have a vertex in this column. Similarly, if $v_q \in W_j$ for some $q \in [m]$, then we again take no action, since we already have a vertex of $K'$ in this column, namely $v_q$; note that $v_q \in W^i_j$ for the unique $i$ such that $j \in A_i$, since the sets $A_\ell$ are pairwise-disjoint. Now suppose that $j \in A_i$ for some $i \in [s]$, and $V(K'') \cap W_j = \emptyset$. Let $v_1', \dots, v'_{t-1}$ be the vertices previously chosen by the algorithm (so not including $v_1, \dots, v_m$). We choose a vertex $v'_{t} \in Y^i_j \cap \bigcap_{\ell \in [m]} N(v_\ell) \cap \bigcap_{\ell \in [t-1]} N(v'_\ell)$, so $\{v_1, \dots, v_m, v'_1, \dots, v'_t\}$ induces a clique in $G$. If row $i$ is pair-complete, $|A_i| = 2$, $V(K'') \cap W^i =\emptyset$ and we have previously selected a vertex $v'_\ell$ in $W^i$, then we also add the requirement that $v'_t \in S^i$ if and only if $v'_\ell \in S^i$. If instead row $i$ is pair-complete and meets the conditions of (iii) then we instead add the requirement that $v'_t \in S^i$ if and only if $b=1$. After proceeding through every $j \in [r]$ we have a vertex of $W^i_j$ for every $i \in [s]$ and $j \in A_i$ (some of which are the vertices of $K''$). We let $K'$ be the subgraph of $G$ induced by these vertices. Then $K'$ is a clique of size $a$ in $G$ which extends $K''$ and satisfies~(i). 
The additional requirements on the choice of vertices from any pair-complete row $i$ imply that $K'$ must satisfy (ii) and (iii) also. 

Having defined the greedy algorithm, we will now show that there are at least $n/4$ choices at each step. First we consider the number of choices for some $v'_t$ in $Y^i_j$, where row $i$ is not a pair-complete row satisfying the conditions in (ii) or (iii). 
Note that since we are making this choice, $W_j$ does not contain a vertex of $K''$, so (a) does not apply to row $i$.
Let \[ P := \{v_1, \dots, v_m, v_1', \dots, v_{t-1}' \} \ \text{ and } \ P^i := (P \cap W^i) \sm v_1.\]
Then $|P^i| \leq |A_i| - 1 \leq p_i - 1$. We need to estimate $|Y^i_j \cap \bigcap_{v \in P} N(v)|$. Note that each $v \in P^i$ has $|Y^i_j \sm N(v)| \leq n$ by $(\dagger)$. If $V(K'') \ne \es$, then write $N' := Y^i_j \sm N(v_1)$, so $|N'| \leq n$ also; if $V(K'') = \es$ we let $N' = \es$. Observe that any vertex of $P \sm \{P^i \cup v_1\}$ is good, either by assumption (for $v_2, \dots, v_m$) or by selection (since the greedy algorithm only selects vertices from some $Y^{i'}_{j'}$). Then any vertex of $P \sm \{P^i \cup v_1\} = P \sm W^i$ lies in $Y^{i'}$ for some $i' \ne i$, and therefore has at most $\beta n$ non-neighbours in $Y^i_j$ by (A3). So $|Y^i_j \cap \bigcap_{v \in P} N(v)|$ is at least
\begin{equation}\label{eq:vertexchoices}
|Y^i_j| - |P^i|n - |N'| - r \beta n \stackrel{(A2)}{\geq} p_in - (p_i - 1) n - n - (r+1) \beta n = - (r+1) \beta n.
\end{equation}
Whilst this crude bound does not imply that we have even one possible choice for $v'_t$, we will now show that any of the assumptions (b)--(e) improves some part of the bound by at least $n/3$, which implies that there are at least $n/3 - (r+1) \beta n \geq n/4$ choices for $v'_t$. If (b) pertains to row $i$ (so $V(K'') = \es$), then we have $|N'| = 0$ instead of $|N'| \leq n$. If (d) or (e) pertains to row $i$, then we have the bound $|P^i| \leq p_i - 2$ in place of $|P^i| \leq p_i - 1$. The same is true if (c) pertains to row $i$, unless we are choosing the final vertex in $W^i$, that is $j = \max A_i$. Then our initial relabelling implies that $W^i_j$ is good with respect to $v_1$, so (A4) gives the bound $|N'| =  |Y^i_j \sm N(v_1)| \leq 2n/3$ in place of $|N'| \leq n$. In all cases the improvement of at least $n/3$ to (\ref{eq:vertexchoices}) yields at least $n/3 - (r+1)\beta n \geq n/4$ choices for $v'_t$.

It remains to consider the number of choices for some $v'_t$ in $Y^i_j$, where row $i$ is a pair-complete row satisfying the conditions in (ii) or (iii). Suppose first that row $i$ has the conditions of (ii), namely $|A_i| = 2 = p_i$ and $V(K'') \cap W^i = \emptyset$. Suppose also that we have previously selected a vertex $v'_\ell \in W^i$. Then we must ensure that $v'_t \in S^i_j$ if and only if $v'_\ell \in S^i$. Note that these conditions imply that either (b) or (c) pertains to row $i$. We will show that $|N'| \le 2n/3$ in either case. In case of (b) this holds because $N'$ is empty. In case of (c), $v'_t$ is the final vertex to be selected in row $i$, and so our initial relabelling implies that $W^i_j$ is good with respect to~$v_1$; then $|N'| \le 2n/3$ by definition. Since $v'_\ell$ is a good vertex (by choice), if $v'_\ell \in S^i$ then by (A3) we have $|S^i_j \sm N(v'_\ell)| \leq \beta n$, or if $v'_\ell \notin S^i$ then by (A3) we have $|(Y^i_j \sm S^i_j) \sm N(v'_\ell)| \leq \beta n$. In the former case we have $$\left|Y^i_j \cap S^i_j \cap \bigcap_{v \in P} N(v)\right| \geq |Y^i_j| - |N'| - r \beta n \geq n/4;$$
similarly, in the latter case we obtain $|(Y^i_j \sm S^i_j) \cap \bigcap_{v \in P} N(v)| \geq n/4$. So in either case there are at least $n/4$ possible choices for $v'_t$. Finally, suppose that row $i$ has the conditions of (iii), namely $|A_i| = 1$ and $W^i_j$ is good with respect to~$v_1$ (and $W^i_j$ does not contain a vertex of $K''$ since we are choosing $v'_t$). Then $|N'| \leq 2n/3$ by (A4), so $|Y^i_j \cap S^i_j \cap \bigcap_{v \in P} N(v)| \geq n/4$ and $|(Y^i_j \sm S^i_j) \cap \bigcap_{v \in P} N(v)| \geq n/4$, giving us at least $n/4$ choices for $v'_t$, regardless of the value of $b$.

In conclusion, there are at least $n/4$ suitable choices for each of the $a - m$ vertices chosen by the greedy algorithm in extending $K''$ to $K'$, giving at least $\omega n^{a-m}$ choices for $K'$, as required.\endproof

Now we describe the various types of $K_k$ that will form the building blocks for the deletions. Recall that the number of vertices in each row $W^i$ is approximately proportional to $p_i$. We say that a copy $K'$ of $K_k$ in $G$ is \emph{properly-distributed} if 
\begin{enumerate}[(i)]
\item $|V(K') \cap W^i| = p_i$ for each $i \in [s]$, and 
\item $|V(K') \cap S^i|$ is even for any pair-complete row $i \in [s]$.
\end{enumerate}
Also, for any $i, j \in [s]$ with $i \neq j$ we say that a copy $K'$ of $K_k$ in $G$ is $ij$-\emph{distributed} if
\begin{enumerate}[(i)]
\item $|V(K') \cap W^i| = p_i+1$, $|V(K') \cap W^j| = p_j-1$, and $|V(K') \cap W^\ell| = p_\ell$ for each $\ell \in [s] \sm \{i,j\}$, and 
\item $|V(K') \cap S^\ell|$ is even for any pair-complete row $\ell \neq i,j$.
\end{enumerate}
Note that an $ij$-distributed clique $K'$ has `one too many' vertices in row $i$, and `one too few' in row $j$. By deleting such cliques we can arrange that the size of each row $W^i$ is \emph{exactly} proportional to $p_i$. Thereafter we will only delete properly-distributed copies of $K_k$, so that this property is preserved. Also, condition (ii) in both definitions ensures that we preserve the correct parity of the halves in pair-complete rows. Finally, we say that a copy $K'$ of $K_k$ in $G$ is \emph{properly-distributed outside row $\ell$} if
\begin{enumerate}[(i)]
\item $|V(K') \cap W^i| = p_i$ for each $i \in [s]$, and 
\item $|V(K') \cap S^i|$ is even for any pair-complete row $i \neq \ell$.
\end{enumerate}
Thus $K'$ almost satisfies the definition of `properly-distributed', except that if row $\ell$ is pair-complete it may fail the parity condition for the halves. In the next claim we apply Claim~\ref{generalexpansion} to finding the building blocks just described.

\begin{claim} \label{choosekk} \textbf{(Building blocks)}
We can find copies of $K_k$ in $G$ as follows. 
\begin{enumerate}[(i)] 
\item Let $A_1, \dots, A_s \subseteq [r]$ be pairwise-disjoint with $|A_i| = p_i$ for each $i \in [s]$. Then there are at least $\omega n^k$ properly-distributed copies $K'$ of $K_k$ in $G$ such that for any $i \in [s]$, $K'$ intersects $W^i$ in precisely those $W^i_j$ with $j \in A_i$.
\item Any vertex $v \in V(G)$ lies in at least $\omega n^{k-1}/4$ properly-distributed copies of $K_k$ in $G$.
\item Let $i, j \in [s]$ be such that $p_i \geq 2$ and $i \neq j$. Then there are at least $\gamma n^k$ $ij$-distributed copies $K'$ of $K_k$ in $G[Y]$. Furthermore, if row $i$ is pair-complete then for any $b \in \{0,3\}$ there are at least $\gamma n^k$ such copies $K'$ of $K_k$ with $|V(K') \cap S^i| = b$.
\item Let $i, j \in [s]$ be such that $p_i = 1$ and $i \neq j$. Suppose $uv$ is an edge in $G[W^i]$ such that $u$ is a good vertex. Then there are at least $\omega n^{k-2}$ $ij$-distributed copies $K'$ of $K_k$ in $G$ that contain $u$ and $v$. Furthermore, if row $j$ is pair-complete then for any $b \in \{0,1\}$ there are at least $\omega n^{k-2}$ such copies $K'$ of $K_k$ with $|V(K') \cap S^j| = b$.
\item Let $i \in [s]$ be such that row $i$ is pair-complete, and $uv$ be an edge in $G[W^i]$ such that $u$ is good. Then there are at least $\omega n^{k-2}$ copies of $K_k$ in $G$ which contain both $u$ and $v$ and are properly-distributed outside row $i$.
\end{enumerate}
\end{claim}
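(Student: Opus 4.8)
The plan is to deduce all five parts from Claim~\ref{generalexpansion}, in each case by choosing an appropriate initial clique $K''$ and appropriate pairwise-disjoint index sets $A_1,\dots,A_s\subseteq[r]$, and then checking that for every row one of the conditions (a)--(e) of that claim holds. Properties (i)--(iii) returned by Claim~\ref{generalexpansion} then immediately give the prescribed distribution of $V(K')$ among the rows $W^i$ and the required parities of $|V(K')\cap S^i|$ in the pair-complete rows, so beyond this verification only some counting is needed.

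I would handle part~(i) first: take $K''=\es$, so $m=0$ and condition~(b) holds for every row since $|A_i|=p_i$; Claim~\ref{generalexpansion} then produces at least $\omega n^k$ copies $K'$ of $K_k$ in $G[Y]$ meeting exactly the blocks $W^i_j$ with $j\in A_i$, and properties (i), (ii) make each of these properly-distributed. For part~(iii) I would take $K''$ to be a copy of $K_{p_i+1}$ inside $G[Y^i]$, of which there are at least $\gamma'n^{p_i+1}$ by~(A5); then condition~(a) holds for row~$i$, condition~(e) holds for the deficient row~$j$ as $|A_j|=p_j-1$, and for every other row condition~(c) holds because $v_1\in V(K'')$ is good, so every block outside its own row and column is good with respect to~$v_1$ while the disjointness of the $A_\ell$ keeps the column of~$v_1$ out of~$A_\ell$. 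Extending each $K''$ and summing over the choices of~$K''$ gives at least $\gamma n^k$ $ij$-distributed copies in $G[Y]$, and the ``furthermore'' clause follows by choosing~$K''$ inside $G[Y^i\cap S^i]$ or inside $G[Y^i\sm S^i]$ (available again by~(A5)) so that $|V(K')\cap S^i|\in\{3,0\}$. Parts~(ii),~(iv),~(v) all have the same shape: fix a small clique $K''$ containing a prescribed, possibly bad, vertex $v=v_1$, and extend. For~(ii) one takes $K''=\{v\}$ and the row of~$v$ is handled by condition~(d); for~(iv) and~(v) one takes $K''=\{u,v\}$ with $v_2=u$ the given good vertex and $A_i$ the pair of columns of $u$ and $v$, so condition~(a) handles row~$i$ and condition~(e) handles the deficient row in~(iv); the control of $|V(K')\cap S^j|$ in the ``furthermore'' part of~(iv) (where $|A_j|=1$) comes from property~(iii). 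In every one of these three parts, what remains is to make condition~(c) work for the other rows, that is, to arrange that each remaining $A_\ell$ contains a block good with respect to~$v$.

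I expect the real obstacle to be exactly this last point when $v=v_1$ is a \emph{bad} vertex, since then the non-neighbourhood of $v$ may be spread over many blocks. Here I would use $(\dagger)$: at most one block in each column other than that of~$v$ is bad with respect to~$v$, so one can pick the disjoint sets~$A_\ell$ to contain a block good with respect to~$v$ in every row, \emph{unless} some single row~$\ell_0$ has all of its blocks bad with respect to~$v$ (and there can be at most one such row, since the bad blocks of the columns are then all in row~$\ell_0$). In that residual case I would absorb row~$\ell_0$ into $K''$: if $p_{\ell_0}=1$, by adjoining to $K''$ a good neighbour of~$v$ in $W^{\ell_0}$; if $p_{\ell_0}\ge2$, by adjoining a copy of $K_{p_{\ell_0}}$ in $G[Y^{\ell_0}]$ all of whose vertices are adjacent to~$v$ (which exists because $v$ still has $\Omega(n)$ neighbours in each block of row~$\ell_0$, by $(\dagger)$ and~(A2)); in either case condition~(a) then disposes of row~$\ell_0$. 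This extra greedy choice of a good neighbour of~$v$, for which there are at least $n/4$ options, is precisely what accounts for the factor~$1/4$ in the bound of part~(ii); and I anticipate that the most degenerate subcase — where this greedy choice risks failing because $v$ has almost no neighbours in row~$\ell_0$, forcing $v$ to look like a vertex of rows $V^1,V^2$ of Construction~\ref{fischereg} — has to be excluded using the hypothesis of Lemma~\ref{diagonalmindeg} that $G$ is not isomorphic to $\Gamma_{n^+,r,k}$.
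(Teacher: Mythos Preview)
Your outline for parts (i) and (iii) matches the paper. For parts (ii), (iv), (v) you are close, but the ``residual case'' analysis goes astray in two ways.

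First, the case in which some row $\ell_0$ other than the row of $v$ has all of its blocks bad with respect to $v$ simply cannot occur, and it is precisely the rule by which bad vertices were assigned to rows that prevents it. Recall that each bad $v$ was placed in a row containing the \emph{most} blocks bad with respect to $v$; hence any other row contains at most half of the at most $r-1$ bad blocks lying in columns other than that of $v$, so at most $(r-1)/2\le r-3$ of them. After deleting the row of $v$ and the column(s) used by $K''$, every remaining row therefore retains a good block, and since each remaining column has at most one bad block, a transversal of good blocks exists. The paper records these facts as (R1)--(R3) and appeals to a short inductive lemma, Proposition~\ref{transversal}, rather than Hall's theorem. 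Your workaround of absorbing $\ell_0$ into $K''$ is thus never needed, and your appeal to the hypothesis $G\not\cong\Gamma_{n^+,r,k}$ is misplaced: that hypothesis enters only in Claim~\ref{balancingrows}, not here.

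Second, there is a genuine subcase you miss in part~(ii), and it is the true source of the factor $1/4$. When the row $\ell$ containing $v$ is pair-complete, taking $K''=\{v\}$ and invoking condition~(d) gives no control over the parity of $|V(K')\cap S^\ell|$: property~(ii) of Claim~\ref{generalexpansion} applies only to pair-complete rows with $V(K'')\cap W^i=\emptyset$, which fails for row $\ell$. The paper's fix is to first use (A4) and (A2) to pick a good neighbour $u\in Y^\ell\cap N(v)$ with $|\{u,v\}\cap S^\ell|$ even (at least $n/4$ choices), and then apply part~(v) to the edge $uv$; since $V(K')\cap W^\ell=\{u,v\}$, the resulting $K'$ is properly-distributed. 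The $1/4$ comes from this choice of $u$, not from absorbing a hypothetical all-bad row.
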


\proof
For (i) we apply Claim~\ref{generalexpansion} to $A_1, \dots, A_s$ with $V(K'') = \emptyset$, so condition (b) pertains to all rows. This gives at least $\omega n^k$ copies $K'$ of $K_k$ in $G$ such that $K'$ intersects precisely those $W^i_j$ with $i \in [s]$ and $j \in A_i$, and $|V(K') \cap S^i|$ is even for any pair-complete row $i$. Since $|A_i| = p_i$ for each $i \in [s]$, each such $K'$ is properly-distributed. Next we consider (iii), as this is also a simple application of Claim~\ref{generalexpansion}; we will come back to (ii). We begin by choosing a copy $K''$ of $K_{p_i+1}$ in $G[Y^i]$. By (A5), there are at least $\gamma' n^{p_i+1}$ such copies, and if row $i$ is pair-complete, there are at least $\gamma' n^{p_i+1}$ such copies with precisely $b$ vertices in $S^i$. Fix any such $K''$ and let $A_i$ be the set of $q \in [r]$ such that $K''$ has a vertex in column $V_q$, so $|A_i| = p_i + 1$. Now choose pairwise-disjoint subsets $A_\ell \subseteq [r] \sm A_i$ with $A_j = p_j - 1$ and $|A_\ell| = p_\ell$ for every $\ell \in [s] \sm \{i,j\}$. We may apply Claim~\ref{generalexpansion} with $K''$ and the sets $A_\ell$ for $\ell \in [s]$, as condition (a) of the claim applies to row $i$, condition (e) applies to row $j$, and condition (c) applies to all other rows (since every vertex of $K''$ is good). We deduce that there are at least $\omega n^{k - p_i - 1}$ copies $K'$ of $K_k$ which extend $K''$ such that $|V(K') \cap W^\ell| = |A_\ell|$ for any $\ell \in [s]$ and $|V(K') \cap S^\ell|$ is even for any pair-complete row $\ell \neq i, j$. Each such $K'$ is $ij$-distributed,  
so in total we have at least $\gamma' \omega n^k \geq \gamma n^k$ copies $K'$ of $K_k$ with the required properties.

For (ii), (iv) and (v) we proceed similarly, but in each of these cases we have the possibility that the vertex $v$ might be bad,
so satisfying requirement (c) in Claim~\ref{generalexpansion} is no longer trivial. We consider the blocks as an $s \times r$
rectangle $R$, and colour those blocks which are bad with respect to $v$.  Our strategy will be to delete some appropriate rows
and columns from $R$, apply Proposition~\ref{transversal} to find a transversal $T$ of non-coloured blocks in the remaining
subrectangle $R'$, and then use $T$ to choose sets $A_i$ for $i \in [s]$ which meet the conditions of
Claim~\ref{generalexpansion}. (The notation $T$ is not intended to suggest any relationship with the sets $T^i_j$.) 
We note the following properties of $R$:
\begin{enumerate}[({R}1)]
\item any column not containing~$v$ has at most one coloured block,
\item at most $r-2$ coloured blocks lie outside the row containing $v$,
\item any row not containing $v$ has at most $r-3$ coloured blocks.
\end{enumerate}
To see this, recall that we observed (R1) earlier, and (R2) follows because $v$ lies in a row in which it has the most bad blocks. This also implies (R3), as any row not containing $v$ has at most $(r-1)/2 \le r-3$ coloured blocks, using the assumption that $r > 3$. 

For (ii), we let $\ell$ and $j_\ell$ be such that $v \in W^\ell_{j_\ell}$, and suppose first that row $\ell$ is not pair-complete (we postpone the case where $\ell$ is pair-complete to the end of the proof). We remove the row and column containing $v$ to obtain an $(s-1) \times (r-1)$ subrectangle $R'$. Since $R'$ contains at most $r-2$ coloured blocks, with at most one in each column, it contains a transversal $T$ of non-coloured blocks by Proposition~\ref{transversal}. For each $i \in [s] \sm \{\ell\}$ let $j_i$ be such that $T$ includes $W^i_{j_i}$. Choose any pairwise-disjoint subsets $A_i \subseteq [r]$ with $j_i \in A_i$ and $|A_i| = p_i$ for each $i \in [s]$. We may apply Claim~\ref{generalexpansion} to $A_i$, $i \in [s]$ with $V(K'') =\{v_1\} = \{v\}$, since condition~(d) pertains to row $\ell$, and condition~(c) pertains to all other rows. This yields at least $\omega n^{k-1}$ copies $K'$ of $K_k$ which extend $K''$ such that $|V(K') \cap S^i|$ is even for any pair-complete row $i$ and $|V(K') \cap W^i| = |A_i| = p_i$ for any $i \in [s]$. Each such $K'$ is a properly-distributed copy of $K_k$ in $G$ containing $v$.

For (iv), let $j_u$ and $j_v$ be the columns with $u \in V_{j_u}$ and $v \in V_{j_v}$, and let $A_i = \{j_u, j_v\}$. So $|A_i| = 2 = p_i + 1$. Suppose first that $p_j = 1$. In this case we delete rows $i$ and $j$ and columns $j_u$ and $j_v$ from $R$, leaving an $(s-2) \times (r-2)$ subrectangle $R'$. Then $R'$ has at most one coloured block in each column, at most $r-3$ coloured blocks in each row, and at most $r-2$ coloured blocks in total, so contains a transversal $T$ of non-coloured blocks by Proposition~\ref{transversal}. For each $\ell \in [s] \sm \{i,j\}$ let $q_\ell$ be the column such that $T$ includes $W^\ell_{q_\ell}$, and choose pairwise-disjoint subsets $A_\ell \subseteq [r] \sm A_i$ for $\ell \in [s] \sm \{i\}$ such that $q_\ell \in A_\ell$ and $|A_\ell| = p_\ell$ for each $\ell \in [s] \sm \{i,j\}$, and $A_j = \emptyset$. 
Now suppose instead that $p_j \geq 2$. In this case we delete row $i$ and columns $j_u$ and $j_v$ from $R$, leaving an $(s-1)
\times (r-2)$ subrectangle $R'$. Then $R'$ has at most one coloured block in each column, at most $r-3$ coloured blocks in each
row, and at most $r-2$ coloured blocks in total. Since $p_j \geq 2$ we have $r-2 \geq k-2 \geq s-1$, so we may again apply
Proposition~\ref{transversal} to find a transversal $T$ of non-coloured blocks in $R'$. For each $\ell \in [s] \sm \{i\}$
let $q_\ell$ be the column such that $T$ includes $W^\ell_{q_\ell}$, and choose pairwise-disjoint subsets $A_\ell \subseteq [r]
\sm A_i$ for $\ell \in [s] \sm \{i\}$ such that $q_\ell \in A_\ell$ for any $\ell \in [s] \sm \{i\}$, $|A_\ell| = p_\ell$ for each
$\ell \in [s] \sm \{i,j\}$, and $|A_j| = p_j - 1$. In either case we may apply Claim~\ref{generalexpansion} with $V(K'') =
\{u,v\}$, since conditions (a), (e) and (c) of Claim~\ref{generalexpansion} pertain to rows $i$, $j$ and all other rows
respectively. We deduce that there are at least $\omega n^{k-2}$ copies $K'$ of $K_k$ in $G$ which extend $K''$ such that $|V(K')
\cap W^\ell| = |A_\ell|$ for any $\ell \in [s]$ and $|V(K') \cap S^\ell|$ is even for any pair-complete row $\ell \neq i, j$. Each
such $K'$ is $ij$-distributed. Furthermore, if row $j$ is pair-complete, then we can apply Claim~\ref{generalexpansion}(iii),
as $W^j_{q_j} \in T$ is good with respect to $v$ by definition of $T$, to see that there are at least $\omega n^{k-2}$ such copies $K'$ with $|V(K') \cap S^j| = b$, as required.

For (v), we again let $j_u$ and $j_v$ be the columns with $u \in V_{j_u}$ and $v \in V_{j_v}$ and $A_i = \{j_u, j_v\}$. So $|A_i| = 2 = p_i$. We delete row $i$ and columns $j_u$ and $j_v$ from $R$, leaving an $(s-1) \times (r-2)$ subrectangle $R'$. Then $R'$ has at most one coloured block in each column, at most $r-3$ coloured blocks in each row, and at most $r-2$ coloured blocks in total. Since $p_i=2$ we have $r-2 \geq k-2 \geq s-1$, so as before Proposition~\ref{transversal} yields a transversal $T$ of non-coloured blocks in $R'$. We let $W^\ell_{q_\ell}$, $\ell \in [s] \sm \{i\}$ be the blocks of $T$, and choose pairwise-disjoint subsets $A_\ell \subseteq [r] \sm A_i$ such that $q_\ell \in A_\ell$ and $|A_\ell| = p_\ell$ for any $\ell \in [s] \sm \{i\}$. We may apply Claim~\ref{generalexpansion} with $V(K'') = \{u,v\}$, since conditions (a) and (c) of Claim~\ref{generalexpansion} pertain to row $i$ and all other rows respectively. So there are at least $\omega n^{k-2}$ copies $K'$ of $K_k$ in $G$ which extend $K''$ such that $|V(K') \cap W^\ell| = |A_\ell| = p_\ell$ for any $\ell \in [s]$ and $|V(K') \cap S^\ell|$ is even for any pair-complete row $\ell \neq i$. Each such $K_k$ is properly-distributed outside row $i$.

Finally we consider the case of (ii) where row~$\ell$ (containing $v$) is pair-complete. By (A4) and (A2) there are at least $n/4$ vertices $u \in Y^\ell \cap N(v)$ such that $|\{u,v\} \cap S^\ell|$ is even. For any such $u$, by (v) there are at least $\omega n^{k-2}$ copies $K'$ of $K_k$ which extend $uv$ and are properly-distributed outside row $\ell$. Each such $K'$ is properly-distributed since $V(K') \cap S^\ell = \{u,v\}$, so $v$ lies in at least $\omega n^{k-1}/4$ properly-distributed copies of $K_k$.
\endproof

We now use Claim~\ref{choosekk} to select several $K_k$-packings in $G$ whose deletion leaves a subgraph $G'$ which satisfies the conclusions of the lemma. The choice of these packings will vary somewhat according to the row structure of $G$. We say that $G$ has the \emph{extremal row structure} if it has the same row structure as the graphs $\Gamma_{n, r, k}$ of Construction~\ref{fischereg}, that is, if there is some row $i \in [s]$ with $p_i \geq 2$ which is pair-complete, and $p_{i'} = 1$ for any $i' \neq i$; this case requires special attention. The first step is the following claim, which balances the row sizes, so that the remainder of row $i$ has size proportional to $p_i$ for $i \in [s]$. This is the only step of the proof that requires the exact minimum degree condition (whether or not $G$ has the extremal row structure).

\begin{claim} \label{balancingrows} \textbf{(Balancing rows)}
There is a $K_k$-packing $M_1$ in $G$ such that $|M_1| \leq \beta rk^2n$ and $|W^i \sm V(M_1)|  = p_i (r n^+/k - |M_1|)$ is a constant multiple of $p_i$ for each $i \in [s]$. If $G$ has the extremal row structure we can also require that $|S^i \sm V(M_1)|$ is even for the unique $i \in [s]$ with~$p_i = 2$. 
\end{claim}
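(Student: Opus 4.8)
The plan is to reduce Claim~\ref{balancingrows} to an integer flow problem on the rows and realize the flow using the building blocks of Claim~\ref{choosekk}. Write $M := rn^+/k$, an integer by hypothesis, and set $\delta_i := |W^i| - p_iM$ for $i \in [s]$. Since $\sum_i |W^i| = rn^+ = kM$ and $\sum_i p_i = k$ we get $\sum_i \delta_i = 0$, each $\delta_i \in \Z$, and (A2) gives $|\delta_i| \le \beta rn$. If a $K_k$-packing $M_1$ consists, for each ordered pair $a \ne b$, of exactly $x_{ab}$ copies that are $ab$-distributed (and of no properly-distributed copies), then deleting it decreases $|W^i|$ by $p_i|M_1| + \sum_{b \ne i} x_{ib} - \sum_{a \ne i} x_{ai}$, so the desired conclusion $|W^i \setminus V(M_1)| = p_i(M - |M_1|)$ for all $i$ is equivalent to $\sum_{b \ne i} x_{ib} - \sum_{a \ne i} x_{ai} = \delta_i$ for all $i$. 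Since $\sum_i \delta_i = 0$, non-negative integers $x_{ab}$ with this property exist and may be chosen so that $x_{ab} > 0$ only when $\delta_a > 0$; then $|M_1| = \sum_{a \ne b} x_{ab} = \sum_{i:\delta_i>0} \delta_i \le s\beta rn \le \beta rk^2n$.

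Next I would realize this flow greedily. For a pair $(a,b)$ with $p_a \ge 2$, Claim~\ref{choosekk}(iii) supplies $\gamma n^k$ $ab$-distributed copies in $G[Y]$, vastly more than the $k|M_1| \ll n$ vertices committed so far, so $x_{ab}$ pairwise-disjoint ones can be selected. For a pair $(a,b)$ with $p_a = 1$ we must first find $x_{ab}$ disjoint edges of $G[W^a]$ with a good endpoint and then extend each via Claim~\ref{choosekk}(iv). Here is where $\delta_a > 0$ is used: it forces $|W^a| > M \ge rn$, so $\sum_j(|W^a_j|-n) > 0$; using $(\dagger)$ (every vertex has at most $n$ non-neighbours in any other part) together with defect Hall and (A1) one extracts a matching of $\ge \delta_a$ good-ended edges inside $G[W^a]$. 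This is the one place where $(\dagger)$ is applied tightly, which is precisely why the exact degree condition $\delta^*(G) \ge (k-1)n^+/k$ (rather than $(k-1)n^+/k - o(n)$) is genuinely needed. Setting $M_1$ to be the union of all these copies proves the first assertion, and the whole claim when $G$ does not have the extremal row structure.

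Finally, suppose $G$ has the extremal row structure, and let $i$ be its unique row with $p_i = 2$, which is pair-complete. We must also make $|S^i \setminus V(M_1)|$ even. The idea is that $M_1$ either contains a copy meeting $W^i$ in a way that leaves the parity of its $S^i$-intersection free — an $ib$- or $ai$-distributed copy whenever $\delta_i \ne 0$, whose $S^i$-intersection can be prescribed in $\{0,3\}$ or $\{0,1\}$ by the ``furthermore'' clauses of Claim~\ref{choosekk}(iii),(iv) — or else every copy of $M_1$ is $\ell\ell'$-distributed with $\ell, \ell' \ne i$ and hence, by Claim~\ref{generalexpansion}(ii), meets $S^i$ in an even number of vertices. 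In the first case, switching the value on one such copy corrects the parity of $|V(M_1) \cap S^i|$ without disturbing any $|V(M_1) \cap W^\ell|$. In the second case $|V(M_1)\cap S^i|$ is even, so we are done if $|S^i|$ is even, and if $|S^i|$ is odd we instead delete one further copy of $K_k$ that has $p_\ell$ vertices in each row $W^\ell$ but meets $S^i$ in an odd number of vertices (this preserves all row sizes and flips the $S^i$-parity); such a copy exists as soon as $G$ has an edge joining $S^i$ to $W^i \setminus S^i$ with a good endpoint, which one extends via Claim~\ref{generalexpansion}.

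I expect the main obstacle to be showing that this cross-halves edge is always available. If it were absent, the two halves of the pair-complete row $i$ would span no edge of $G$; combined with $\delta^*(G) \ge (k-1)n^+/k$ and the extremal row structure, every vertex of $S^i$ (respectively of $W^i \setminus S^i$, respectively of a row $W^\ell$ with $\ell \ne i$) would be forced to be adjacent to essentially all vertices of $S^i$ and of every $W^\ell$ (respectively of $W^i\setminus S^i$ and of every $W^\ell$; respectively of both halves of row $i$ and of every $W^{\ell'}$) in the other columns. A careful, again tight, use of the degree condition then upgrades these ``essentially all'' statements to ``all'' and forces $k \mid n^+$, $rn^+/k$ odd, and $G \cong \Gamma_{n^+,r,k}$, contradicting the hypothesis of the lemma; hence the required edge is present and the construction goes through. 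Turning the absence of one edge into an exact isomorphism with $\Gamma_{n^+,r,k}$ is the delicate point, and it is the only place in the whole proof where the non-extremality assumption is invoked.
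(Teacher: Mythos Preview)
Your non-extremal case matches the paper's Case~1 closely: the flow decomposition, the tight use of $(\dagger)$ to extract the matchings $E^i$ in rows with $p_i=1$, and the greedy extension via Claim~\ref{choosekk}(iii),(iv) are exactly what the paper does.

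The gap is in your handling of the extremal row structure when $\delta_{i^*}=0$ but $a>0$ (so other rows still need balancing) and $|S^{i^*}|$ is odd. In this situation every clique you select is $\ell\ell'$-distributed with $\ell,\ell'\ne i^*$, so $|V(M_1)\cap S^{i^*}|$ is even and you need to flip parity. Your proposed fix is a single properly-distributed-outside-row-$i^*$ copy built from a cross-halves edge, with the fallback that absence of such an edge forces $G\cong\Gamma_{n^+,r,k}$. But that fallback is wrong here: the isomorphism argument (the paper's Case~2.2) requires \emph{both} the absence of cross-halves edges in $W^{i^*}$ \emph{and} the absence of edges with a good endpoint in every $G[W^\ell]$ with $\ell\ne i^*$. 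When $a>0$, some $\ell\ne i^*$ lies in $I^+$, and the very argument you use to build $E^\ell$ shows such an edge exists---so you cannot conclude $G\cong\Gamma$, yet you also have no guarantee of a cross-halves edge. The paper's remedy (its Case~2.1.3) is a redirection: replace the final $i_a i'_a$-distributed copy by an $i_a i^*$-distributed one (Claim~\ref{choosekk}(iv) gives free $S^{i^*}$-parity in $\{0,1\}$), then add one $i^*i'_a$-distributed copy (Claim~\ref{choosekk}(iii) gives free $S^{i^*}$-parity in $\{0,3\}$); the net row-counts are unchanged and two free parity bits let you fix $|S^{i^*}\setminus V(M_1)|$.

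Even in the $a=0$ case your final paragraph is too optimistic: from ``no cross-halves edge with good endpoint'' alone you cannot deduce that a vertex of $W^\ell$ ($\ell\ne i^*$) is adjacent to essentially all of $W^{\ell'}$. The paper first tries an edge $uv$ in some $G[W^\ell]$ with $u$ good (extending to an $\ell i^*$-distributed copy and then adding an $i^*\ell$-distributed copy with prescribed parity), and only after \emph{both} that option and the cross-halves option fail does the tight degree count force all block sizes to be exact, upgrade every vertex to satisfy~(A3), and yield $G\cong\Gamma_{n^+,r,k}$. You need this second route through an edge of $G[W^\ell]$, both as an additional parity-fixing mechanism and as an additional hypothesis in the isomorphism endgame.
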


\proof
We write $a_i := |W^i| - p_irn^+/k$ for each $i \in [s]$, where we recall that $rn^+/k$ is an integer, so $a_i$ is an integer. Then $\sum_{i \in [s]} a_i = |V(G^+)| - rn^+ = 0$. Let $I^+ = \{i : a_i > 0\}$, $I^- = \{i : a_i < 0\}$ and $a := \sum_{a_i > 0} a_i = -\sum_{a_i < 0} a_i$. Recall that $n^+-k+1 \le kn \le n^+$ and $(1+\beta/2) p_i rn \geq |W^i| = \sum_{j \in [r]} |W^i_j| \geq (1-\beta/2)p_i rn$ for each $i$ by (A2). This gives $|a_i| \leq \beta rkn$ for each $i$ and $a \leq \beta rk^2n$. We choose sequences $(i_\ell: \ell \in [a])$ and $(i'_\ell: \ell \in [a])$ so that each $i \in I^+$ occurs $a_i$ times as some $i_\ell$, and each $i \in I^-$ occurs $-a_i$ times as some $i'_\ell$. We divide the remainder of the proof into two cases according to whether or not $G$ has the extremal row structure.

\medskip

\nib{Case 1: $G$ does not have the extremal row structure.}
We start by showing that for any $i \in I^+$ with $p_i = 1$ there is a matching $E^i$ in $G[W^i]$ of size $a_i$, each of whose edges contains a good vertex. We let $a_{ij} := |W^i_j| - n^+/k$ for each $j \in [r]$, so $|a_{ij}| \leq \beta n$ for each $j$ by (A2); note that the $a_{ij}$ are not necessarily integers. Then $|W^i_j| = n^+/k + a_{ij} \geq n + a_{ij}$. By $(\dagger)$, any $v \in W^i_{j'}$ for any $j' \neq j$ has at least $a_{ij}$ neighbours in $W^i_j$. We fix a matching $M$ in the bipartite graph $G[W^i_1 \cup W^i_2]$ which is maximal with the property that every edge contains a good vertex. We will show that $|M| \ge a_{i1}+a_{i2}$. For suppose otherwise. Then we may fix $u \in Y^i_1$ and $v \in Y^i_2$ which are not covered by $M$. As noted above, $u$ has at least $a_{i2}$ neighbours in $W^i_2$, and $v$ has at least $a_{i1}$ neighbours in $W^i_1$. Since $u$ and $v$ are good vertices, each of these neighbours must be covered by $M$, by maximality of $M$. Thus there must be an edge $w_1w_2$ in $M$ with $w_1 \in N(v)$ and $w_2 \in N(u)$. Now removing $w_1w_2$ from $M$ and adding the edges $w_1v$ and $w_2u$ contradicts the maximality of $M$. We conclude that $|M| \geq a_{i1}+a_{i2}$. Now we greedily extend $M$ to a maximal matching $E^i$ in $G[W^i]$ for which every edge of $E^i \sm M$ contains a vertex of $Y^i_1$. Then $E^i$ covers at least $a_{ij}$ vertices in $W^i_j$ for each $j \geq 3$, and so has total size at least $\sum_{j \in [r]} a_{ij} = a_i$; by removing edges we may take $E^i$ to have size exactly $a_i$, as claimed.

We now proceed through each $\ell \in [a]$ in turn. For each $\ell$, if $p_{i_\ell} = 1$ then we choose an edge $e \in E^{i_\ell}$ which has not been used for any $\ell' < \ell$ (this is possible as $i_\ell$ occurs $a_{i_\ell} = |E^{i_\ell}|$ times as some $i_{\ell'}$). We extend $e$ to an $i_\ell i'_\ell$-distributed copy $K^\ell$ of $K_k$ in $G$, so that $K^\ell$ is vertex-disjoint from any $K^{\ell'}$ previously selected for $\ell' < \ell$ and from any of the edges of $\bigcup_{i \in I^+} E^i$ other than $e$ itself. Then the extension of $e$ to $K^\ell$ must avoid a set of at most $ka + 2a \leq \beta' n$ `forbidden vertices'. By Claim~\ref{choosekk}(iv), there are at least $\omega n^{k-2} > \beta' n(rn^+)^{k-3}$ extensions of $e$ to an $i_\ell i'_\ell$-distributed copy of $K_k$, so we may choose $K'$ as desired. Similarly, if $p_{i_\ell} \geq 2$ then we choose an $i_\ell i'_\ell$-distributed copy $K^\ell$ of $K_k$ in $G$ which is vertex-disjoint from any previously chosen copy of $K_k$ and from the matchings $E^i$. As before this means we must avoid at most $\beta'n$ forbidden vertices. By Claim~\ref{choosekk}(iii), there are at least $\gamma n^k \geq \beta' n(rn^+)^{k-1}$ $i_\ell i'_\ell$-distributed copies of $K_k$ in $G$, so we can choose $K^\ell$ as desired. At the end of this process we have a $K_k$-packing $M_1 := \{K^\ell : \ell \in [a]\}$ with $|M_1| = a \le \beta rk^2n$.

Now consider any $i \in [s]$. If $i \in I^+$ (so $a_i > 0$) then each copy of $K_k$ in $M_1$ had $p_i$ vertices in $W^i$, except for $a_i$ copies which had $p_i +1$ vertices in $W^i$ (these are the $K^\ell$ with $i_\ell = i$). On the other hand, if $i \notin I^+$ (so $a_i \leq 0$) then each copy of $K_k$ in $M_1$ had $p_i$ vertices in $W^i$, except for $-a_i$ copies which had $p_i - 1$ vertices in $W^i$ (these are the $K^\ell$ with $i'_\ell = i$). In any case $|W^i \cap V(M_1)| = p_i |M_1| + a_i = p_i a + |W^i| - p_i r n^+/k$. Then $|W^i \sm V(M_1)| = p_i (rn^+/k - a)$ is a constant multiple of $p_i$ for each $i \in [s]$, so the proof of Case 1 is complete.

\medskip

\nib{Case 2: $G$ has the extremal row structure.}
Let $i^*$ be the unique row such that $p_{i^*} = 2$, so row~$i^*$ is pair-complete and $p_{i} = 1$ for any $i \neq i^*$. Recall that in this case we must satisfy the additional condition that $|S^{i^*} \sm V(M_1)|$ is even. We divide the proof into two further cases according to whether or not $a=0$.

\medskip

\nib{Case 2.1: $a > 0$.} We start by choosing copies $K^\ell$ of $K_k$ for $\ell<a$ exactly as in Case 1. For $\ell = a$ we consider three further cases, according to the value of $i^*$.

\medskip

\nib{Case 2.1.1: $i^* = i_a$.} As in Case 1, we use Claim~\ref{choosekk}(iii) to choose $K^a$ to be an $i_a i'_a$-distributed copy of $K_k$ in $G$ that does not include a forbidden vertex. However, we also choose $K^a$ so that $|S^{i^*} \sm \bigcup_{\ell \in [a]} V(K^{\ell})|$ is even. This is possible as by Claim~\ref{choosekk}(iii) there are at least $\gamma n^k$ $i_a i'_a$-distributed copies $K'$ of $K_k$ in $G$ for each choice of parity of $|V(K') \cap S^{i_a}|$. 

\medskip

\nib{Case 2.1.2: $i^* = i'_a$.} As in Case 1, we use Claim~\ref{choosekk}(iv) to extend an edge $e \in E^{i_a}$ to an $i_a i'_a$-distributed copy $K^a$ of $K_k$ in $G$ avoiding all forbidden vertices. Here we again impose the additional requirement that $|S^{i^*} \sm \bigcup_{\ell \in [a]} V(K^{\ell})|$ is even. This is possible as by Claim~\ref{choosekk}(iv) there are at least $\omega n^{k-2}$ extensions of $e$ to an $i_a i'_a$-distributed copy $K'$ of $K_k$ in $G$ for each choice of parity of $|V(K') \cap S^{i'_a}|$. 

\medskip

\nib{Case 2.1.3: $i^* \neq i_a, i'_a$.} In this case, instead of choosing the extension $K^a$ of $e \in E^{i_a}$ to be $i_a i'_a$-distributed, we extend $e$ to an $i_a i^*$-distributed copy $K^a$ of $K_k$ in $G$ which avoids any forbidden vertices (using Claim~\ref{choosekk}(iv) as before). The $K_k$-packing $M_1$ then covers `one too many' vertices in $W^{i'_a}$ and `one too few' in $W^{i^*}$. To correct this imbalance, we apply Claim~\ref{choosekk}(iii) to choose an $i^*i'_a$-distributed copy $K'$ of $K_k$ in $G$ such that $K'$ is vertex-disjoint from $M_1$ and $|S^{i^*} \sm (V(K') \cup V(M_1))|$ is even. We add $K'$ to $M_1$.

\medskip

In each of the cases 2.1.1, 2.1.2, 2.1.3 we have $|W^i \sm V(M_1)| = p_i (rn^+/k - a)$ for each $i \in [s]$ as in Case 1. We also have $|S^{i^*} \sm V(M_1)|$ even by construction, and $|M_1| \le \beta rk^2n$, noting in Case 2.1.3 that we could have improved the bound on $a$ to $a \leq \beta rk^2n-1$. This completes the proof in Case 2.1.

\medskip

\nib{Case 2.2: $a = 0$.} Note that $a_i = 0$ for each $i \in [s]$, so $|W^i|/p_i = rn^+/k$ is constant initially. If $|S^{i^*}|$ is even then $M_0 = \emptyset$ has the required properties, so we may assume $|S^{i^*}|$ is odd. Now suppose that for some $i \neq i^*$ there is an edge $uv$ in $G[W^i]$ such that $u$ is good. By Claim~\ref{choosekk}(iv) we may extend $uv$ to an $ii^*$-distributed copy $K^1$ of $K_k$ in $G$. Now apply Claim~\ref{choosekk}(iii) to choose an $i^*i$-distributed copy $K^2$ of $K_k$ in $G$ which does not intersect $K^1$ so that $|S^{i^*} \sm (V(K^1) \cup V(K^2)|$ is even. We then have $M_1 = \{K^1, K^2\}$ with the required properties. So we may assume that no such edge exists. Now suppose instead that $G[W^{i^*}]$ contains an edge $uv$ such that $u$ is good and $|\{u,v\} \cap S^{i^*}| = 1$. By Claim~\ref{choosekk}(v) we may extend $uv$ to a copy $K'$ of $K_k$ which is properly-distributed outside row $i^*$. We may take $M_1 = \{K'\}$, as then $|V(M_1) \cap S^i| = p_i$ for any $i \in [s]$ and $|S^{i^*} \sm V(M_1))|$ is even. So we may assume that no such edge exists. It follows that $|W^i_j| \leq n$ for any $i \neq i^*$ and $j \in [r]$. Indeed, if $|W^i_j| > n$ then choose $j' \neq j$ and a good vertex $u \in Y^i_{j'}$. Then $u$ has a neighbour $v \in W^i_j$, since $u$ has at most $n$ non-neighbours in any part, but we assumed no such edge $uv$ exists. Likewise, it follows that $|S^{i^*}_j|, |W^{i^*}_j \sm S^{i^*}_j| \leq n$ for any $j \in [r]$. For if (say) $|S^{i^*}_j| > n$, then choose $j' \neq j$ and $u \in Y^{i^*}_{j'} \sm S^{i^*}_{j'}$; then $u$ has a neighbour $v \in S^{i^*}_j$, giving an edge $uv$ that we assumed did not exist.

Now for any $j \in [r]$ we have $|W^{i^*}_j| \leq 2n$, $|S^{i^*}_j| \leq n$, $|W^{i^*}_j \sm S^{i^*}_j| \leq n$, and $|W^i_j| \leq n$ for any $i \neq i^*$. Since $|W_j| = |V_j| = n^+ \geq kn$, we have $n^+ = kn$, and equality holds in each of these inequalities. Then $k$ divides~$n^+$ and $rn^+/k = rn = |S^{i^*}|$ is odd. Now, any good vertex $u \in Y^i_j$ for $i \in [s]$ and $j \in [r]$ has $\delta^*(G) \geq (k-1)n$ neighbours in $V_{j'}$ for any $j' \neq j$. If $i \neq i^*$ then none of these can lie in $W^i_{j'}$, so we conclude that $N(u) \cap V_{j'} = V_{j'} \sm W^i_{j'}$. Similarly, we deduce that any $u \in Y^{i^*}_{j} \cap S^{i^*}_j$ has $N(u) \cap V_{j'} = V_{j'} \sm (W^{i^*}_{j'} \sm S^{i^*}_{j'})$ and any $u \in Y^{i^*}_{j} \sm S^{i^*}_j$ has $N(u) \cap V_{j'} = V_{j'} \sm S^{i^*}_{j'}$ for any $j' \neq j$. It follows that for any $i \neq i'$ and $j \neq j'$ any vertex $v \in W^i_j$ has $|N(v) \cap W^{i'}_{j'}| \geq |Y^{i'}_{j'}| \geq (1-\beta)p_{i'}n$. Furthermore, any vertex $v \in S^{i^*}_j$ has $|N(v) \cap S^{i^*}_{j'}| \geq |Y^{i^*}_{j'} \cap S^{i^*}_{j'}| \geq (1-\beta)n$, and any vertex $v \in W^i_{j'} \sm S^i_j$ has $|N(v) \cap (W^{i}_{j'} \sm S^i_{j'})| \geq |Y^{i}_{j'} \sm S^i_{j'}| \geq (1-\beta)n$. So every vertex of $G$ satisfies property (A3), which was shown to hold for all good vertices of $G$ in Claim~\ref{partprops}. This was the only property of good vertices used in the proof of Claim~\ref{generalexpansion}. So we may consider every vertex of $G$ to be good when applying Claim~\ref{generalexpansion}, and therefore also when applying Claim~\ref{choosekk}. The argument above then shows that we may choose $M_1$ as desired if there exists any edge $uv$ in $W^i$ for any $i \neq i^*$, or any edge $uv$ in $W^{i^*}$ such that $|\{u,v\} \cap S^{i^*}| = 1$. Since $G$ is not isomorphic to $\Gamma_{n^+, r, k}$, there must be some such edge. This completes the proof of Case 2.2, and so of the claim.
\endproof

In the next claim we put aside an extra $K_k$-packing $M_2$ that is needed to provide flexibility later in the case when there are at least two rows with $p_i \ge 2$. If instead $G$ has at most one row $i$ with $p_i \geq 2$ then we take $M_2=\es$. The proof of the claim is immediate from Claim~\ref{choosekk}(iii) so we omit it.

\begin{claim} \label{preparing} \textbf{(Preparing multiple rows with $p_i \geq 2$)}
Suppose that at least two rows of $G$ have $p_i \geq 2$. Then there is a $K_k$-packing $M_2$ vertex-disjoint from $M_1$ such that 
\begin{enumerate}[(i)]
\item every vertex covered by $M_2$ is good, and
\item $M_2$ consists of $\bcl{\eta n}$ $ij$-distributed copies of $K_k$ in $G[Y]$ for each ordered pair $(i, j)$ with $i, j \in [s]$, $i \neq j$ and $p_i, p_j \geq 2$.
\end{enumerate}
\end{claim}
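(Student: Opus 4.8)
The plan is to build $M_2$ greedily, adding one $ij$-distributed copy of $K_k$ in $G[Y]$ at a time. Fix an order on the ordered pairs $(i,j)$ with $i,j \in [s]$, $i \neq j$ and $p_i, p_j \geq 2$, and for each such pair select $\bcl{\eta n}$ copies in turn, always insisting that the new clique is vertex-disjoint from $V(M_1)$ and from every clique chosen so far. Taking the union of all these cliques then gives the desired packing $M_2$, and property (ii) holds by construction.

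For the selection step, suppose we have already chosen some cliques. The set $F$ of vertices we must avoid consists of $V(M_1)$ together with the vertices of the cliques chosen so far. Since $|M_1| \leq \beta r k^2 n$ by Claim~\ref{balancingrows}, and since there are at most $s^2 \leq k^2$ ordered pairs, each contributing $\bcl{\eta n}$ cliques on $k$ vertices, we have $|F| \leq k\beta r k^2 n + k^3(\eta n + 1) \leq 2\beta r k^3 n$ for $n$ large, using $\eta \ll \beta$. Recalling that $kn \leq n^+ \leq kn + k$, any single vertex lies in at most $(rn^+)^{k-1} = O(n^{k-1})$ copies of $K_k$ in $G$, so at most $|F| \cdot O(n^{k-1}) = O(\beta n^k)$ of the $ij$-distributed copies of $K_k$ in $G[Y]$ meet $F$. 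By Claim~\ref{choosekk}(iii) there are at least $\gamma n^k$ such copies, and since $\beta \ll \gamma$ with $r, k$ fixed, for $n$ large a valid choice remains; we add it and continue.

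Finally, property (i) is immediate, since every chosen clique lies in $G[Y]$ and $Y$ is by definition the set of good vertices. As the argument is just the greedy method combined with the lower bound of Claim~\ref{choosekk}(iii) --- indeed the authors remark it is immediate --- there is no serious obstacle; the only point needing care is that the forbidden set $F$ stays small relative to $\gamma n^k$, which is ensured by the constant hierarchy $\eta, \beta \ll \gamma$ fixed at the start of the proof of Lemma~\ref{diagonalmindeg}.
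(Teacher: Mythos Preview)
Your argument is correct and is exactly the greedy application of Claim~\ref{choosekk}(iii) that the paper has in mind when it says the proof is immediate; the bookkeeping on $|F|$ using $\eta \ll \beta \ll \gamma$ is accurate under the constant hierarchy fixed in Lemma~\ref{diagonalmindeg}.
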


\medskip

Note that (ii) implies that $|M_2| \leq \beta n$ and $|W^i \sm V(M_1 \cup M_2)|/p_i$ is constant for $i \in [s]$ by choice of $M_1$. The latter property will be preserved when we remove all subsequent packings, as they will only consist of properly-distributed cliques. Next we cover all remaining bad vertices and ensure that the number of remaining vertices is divisible by $rk \cdot r!$. For this, let $B'$ denote the set of all bad vertices in $G$ which are not covered by $M_1 \cup M_2$, and note that by (A1) we have $|B'| \leq \beta n/2$.

\begin{claim} \label{coverdiv} \textbf{(Covering bad vertices and ensuring divisibility)}
There is a $K_k$-packing $M_3$ vertex-disjoint from $M_1 \cup M_2$, consisting of at most $\beta n$ properly-distributed copies of $K_k$, such that
\begin{enumerate}[(i)]
\item $\bigcup_{i \in [3]} M_i$ covers every vertex of $B'$, and
\item the number of vertices not covered by $\bigcup_{i \in [3]} M_i$ is divisible by $rk \cdot r!$.
\end{enumerate}
\end{claim}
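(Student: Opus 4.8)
The plan is to construct $M_3$ in two stages. First I would greedily build a $K_k$-packing $M_3^0$ of properly-distributed copies of $K_k$, vertex-disjoint from $M_1\cup M_2$, that covers every bad vertex not already covered by $M_1\cup M_2$. Then I would append a bounded number of further properly-distributed copies of $K_k$ in order to make the number of uncovered vertices divisible by $rk\cdot r!$.

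For the first stage, recall from (A1) that $G$ has at most $\beta n/2$ bad vertices, so at most $\beta n/2$ of them lie outside $V(M_1\cup M_2)$. I would process these one at a time. Given such a vertex $v$, Claim~\ref{choosekk}(ii) provides at least $\omega n^{k-1}/4$ properly-distributed copies of $K_k$ through $v$, and any fixed vertex other than $v$ lies in at most $(rn^+)^{k-2}$ of them. Since the set of vertices already used (by $M_1$, by $M_2$, and by the copies chosen so far) has size at most $\beta' n$ — using $|M_1|\le\beta rk^2n$ from Claim~\ref{balancingrows}, $|M_2|\le\beta n$ from Claim~\ref{preparing}, at most $\beta n$ copies in $M_3^0$, and $\beta\ll\beta'$ — at most $\beta'n(rn^+)^{k-2}\ll\omega n^{k-1}$ of the copies through $v$ are blocked, so a valid choice always remains. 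After at most $\beta n/2$ steps this produces the desired $M_3^0$ with $|M_3^0|\le\beta n/2$.

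For the second stage, the key point is that $M_2$ is row-balanced: it contains, for each ordered pair $(i,j)$ with $p_i,p_j\ge2$, the same number of $ij$-distributed copies of $K_k$, so the per-row excesses cancel and $|W^i\cap V(M_2)|=p_i|M_2|$ for every $i\in[s]$, while $M_1$ and $M_3^0$ are properly-distributed. Hence the number of vertices uncovered by $M_1\cup M_2\cup M_3^0$ equals $km_0$, where $m_0:=rn^+/k-|M_1|-|M_2|-|M_3^0|$ is a large positive integer (using $rn^+/k\in\Z$). Divisibility of $km_0$ by $rk\cdot r!$ is thus equivalent to $r\cdot r!\mid m_0$, and removing one more properly-distributed copy of $K_k$ decreases $m_0$ by exactly $1$. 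So I would set $c:=m_0\bmod(r\cdot r!)\in\{0,\dots,r\cdot r!-1\}$, fix pairwise-disjoint sets $A_1,\dots,A_s\subseteq[r]$ with $|A_i|=p_i$, and invoke Claim~\ref{choosekk}(i) (which supplies at least $\omega n^k$ properly-distributed copies of $K_k$ with these column intersections) to greedily choose $c$ such copies, pairwise disjoint and disjoint from $M_1\cup M_2\cup M_3^0$; the counting is the same as in the first stage. Taking $M_3$ to be $M_3^0$ together with these $c$ copies yields a packing of at most $\beta n/2+r\cdot r!\le\beta n$ properly-distributed copies of $K_k$, vertex-disjoint from $M_1\cup M_2$. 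Then $\bigcup_{i\in[3]}M_i$ covers every bad vertex, giving (i), and leaves $k(m_0-c)$ vertices uncovered with $rk\cdot r!\mid k(m_0-c)$, giving (ii).

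The step I expect to need the most care is the bookkeeping in the second stage: checking that $M_2$ removes exactly $p_i|M_2|$ vertices from each row $W^i$, so that the uncovered count collapses to a clean multiple of $k$ and the divisibility requirement reduces to a single congruence on $m_0$. Everything else is a routine greedy argument of the kind used several times already, made possible by the separation $\beta\ll\omega$ in the constant hierarchy.
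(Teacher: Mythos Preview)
Your proposal is correct and follows essentially the same approach as the paper: greedily cover the uncovered bad vertices using Claim~\ref{choosekk}(ii), then append further properly-distributed copies via Claim~\ref{choosekk}(i) to hit the required congruence. The paper fixes the total size $C$ of $M_3$ in advance (choosing $\beta n/2 \le C \le \beta n$ with $C \equiv rn^+/k - |M_1 \cup M_2| \pmod{r\cdot r!}$), whereas you compute the correction $c$ after building $M_3^0$; these are the same idea.

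One remark: the bookkeeping you flag as delicate is actually trivial, and you have slightly overcomplicated it. The uncovered vertex count equals $rn^+ - k|M_1\cup M_2\cup M_3^0| = k m_0$ simply because every clique in any $M_i$ has exactly $k$ vertices and $rn^+/k$ is an integer; no row-balance of $M_2$ is needed, and in fact $M_1$ is \emph{not} properly-distributed (its cliques are $ij$-distributed). This misstatement is harmless here, since the per-row distribution is irrelevant to the global divisibility in (ii), but you should drop that justification.
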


\proof
Fix $\beta n/2 \leq C \leq \beta n$ so that $C \equiv rn^+/k - |M_1 \cup M_2|$ modulo $r \cdot r!$ (recall that $rn^+/k$ is an integer). We will greedily choose $M_3$ to consist of $C$ copies of $K_k$. As long as some vertex $v \in B'$ remains uncovered, we choose some such $v$ and select a properly-distributed copy of $K_k$ containing $v$. Once every vertex in $B'$ is covered we repeatedly choose a properly-distributed copy of $K_k$ until we have~$C$ copies of $K_k$ in total. At any step there are at most $k(|M_1 \cup M_2| + C) \leq \beta' n$ forbidden vertices covered by $M_1 \cup M_2$ or a previously-chosen member of $M_3$, so there will always be a suitable choice available for the next member of $M$ by Claim~\ref{choosekk}(i) or~(ii). Fix such an~$M_3$. Then $|V(G) \sm \bigcup_{i \in [3]} V(M_i)|/k = rn^+/k - |M_1 \cup M_2| - C \equiv 0$ modulo $r \cdot r!$, so $|V(G) \sm \bigcup_{i \in [3]} V(M_i)|$ is divisible by $kr \cdot r!$. 
\endproof

The penultimate $K_k$-packing is chosen so that equally many vertices are covered in each part $V_j$.

\begin{claim} \label{balancingcolumns} \textbf{(Balancing columns)}
There is a $K_k$-packing $M_4$ vertex-disjoint from $\bigcup_{i \in [3]} M_i$, consisting of properly-distributed cliques, such that
\begin{enumerate}[(i)]
\item $|V_j \cap \bigcup_{i \in [4]} V(M_i)| = k|\bigcup_{i \in [4]} M_i|/r$ for any $j \in [r]$,
\item $|\bigcup_{i \in [4]} V(M_i)| \leq \beta' n/2$, and
\item $rk \cdot r!$ divides $|M_4|$.
\end{enumerate}
\end{claim}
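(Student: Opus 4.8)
The plan is to reduce this claim to a purely arithmetic statement about realising a prescribed ``deficiency vector'' by copies of $K_k$. First I would record, for each column, how many of its vertices are already covered: set $c_j := |V_j \cap \bigcup_{i \in [3]} V(M_i)|$ for $j \in [r]$. Since each copy of $K_k$ meets every column in at most one vertex, every $c_j$ is at most $|M_1| + |M_2| + |M_3| \le 2\beta r k^2 n$ (by Claims~\ref{balancingrows}, \ref{preparing} and~\ref{coverdiv}), and $\sum_j c_j = k\,|\bigcup_{i \in [3]} M_i|$; in particular, using Claim~\ref{coverdiv}(ii), $\sum_j c_j$ is divisible by $r$. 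The case $r = k$ is trivial, since then the only $k$-subset of $[r]$ is $[r]$ itself, so every copy of $K_k$ covers one vertex in each column and the $c_j$ are already all equal; we take $M_4 = \es$. So assume $r > k$.

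Next I would fix a common target $C$ for the number of covered vertices in each column, aiming to top each column up to $C$. We want: (a) $C \ge \max_j c_j$, so the deficiencies $d_j := C - c_j$ are non-negative; (b) $rC - \sum_j c_j$ divisible by $rk^2 \cdot r!$, so that the forced number of cliques $D := \frac1k(rC - \sum_j c_j)$ is a non-negative integer divisible by $rk \cdot r!$ (this is possible because $r \mid \sum_j c_j$, which fixes $C$ modulo $k^2 \cdot r!$); and (c) $C \le \beta' n/(2r)$, so that conclusion (ii), namely $|\bigcup_{i \in [4]} V(M_i)| = rC \le \beta' n/2$, will hold. Choosing $C$ of size roughly $2\beta r^2 k^2 n$ meets (a) comfortably while leaving room for (b) and (c) (the residue class for $C$ meets the interval $[2\beta r^2 k^2 n,\ \beta' n/(2r)]$, whose length dwarfs $k^2 \cdot r!$ since $\beta \ll \beta'$). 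The point of this choice is the inequality $\max_j d_j \le \frac1k \sum_j d_j$: here $\max_j d_j \le C$, while $\sum_j d_j = rC - \sum_j c_j \ge (r-1)C$, and $r - 1 \ge k$.

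Then I would invoke the combinatorial heart of the argument: a non-negative integer vector $(d_j)_{j \in [r]}$ with $k \mid \sum_j d_j$ and $\max_j d_j \le \frac1k \sum_j d_j$ is a sum of exactly $\frac1k \sum_j d_j$ indicator vectors of $k$-subsets of $[r]$. I would prove this greedily: whenever $\sum_j d_j > 0$ the support of $(d_j)$ has size at least $k$, and subtracting the indicator of any $k$ columns of largest $d_j$-value (ties broken arbitrarily) leaves a vector of the same type, the inequality being preserved (the only slightly delicate point is when the maximum is attained on all $k$ chosen columns, which one handles directly). Iterating gives the decomposition into $D$ summands. Finally, for each $k$-subset $A$ arising in the decomposition I would split $A$ into pairwise-disjoint sets $A_1, \dots, A_s$ with $|A_i| = p_i$ and apply Claim~\ref{choosekk}(i) to extract a properly-distributed copy of $K_k$ meeting exactly the columns of $A$; these $D$ cliques are chosen one at a time, each avoiding the at most $k(|\bigcup_{i\in[3]}M_i| + D) \le 2k\beta' n$ previously used vertices, which is fine since the number of suitable copies through any fixed vertex is at most $(kn)^{k-1}$ and $2k\beta' n (kn)^{k-1} < \omega n^k$ as $\beta' \ll \omega$. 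The resulting packing $M_4$ then covers exactly $c_j + d_j = C$ vertices in each column, giving (i) with constant value $C = k\,|\bigcup_{i\in[4]}M_i|/r$, has $|M_4| = D$ divisible by $rk \cdot r!$, giving (iii), and satisfies (ii) since $|\bigcup_{i\in[4]}V(M_i)| = rC \le \beta' n/2$.

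I expect the main obstacle to be the greedy decomposition lemma --- confirming that the cone condition $\max_j d_j \le \frac1k \sum_j d_j$ is exactly what is needed and that it is self-reproducing under the greedy step --- together with the bookkeeping needed to pin down a value of $C$ simultaneously satisfying the non-negativity, divisibility and size constraints; the extraction of the cliques themselves is routine given Claim~\ref{choosekk}(i).
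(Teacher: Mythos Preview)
Your proposal is correct and takes a genuinely different route from the paper's proof. The paper does not choose a target level $C$ and decompose a deficiency vector. Instead it computes the signed imbalances $a'_j := c_j - k|\bigcup_{i\in[3]} M_i|/r$, which sum to zero, pairs each unit of positive imbalance at some column $j_q$ with a unit of negative imbalance at some $j'_q$, and for each such pair selects $A_q \in \binom{[r]}{k}$ with $j_q \in A_q$ and $A'_q := (A_q \setminus \{j_q\}) \cup \{j'_q\}$. Writing $N_A, N'_A$ for the multiplicities of $A$ among the $A_q$ and $A'_q$, it then takes $C' + N'_A - N_A$ properly-distributed cliques of each index $A$, where $C'$ is a fixed integer divisible by $kr\cdot r!$ and large enough to make all counts non-negative; this gives $|M_4| = C'\binom{r}{k}$ and the column counts telescope to cancel the $a'_j$.

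The trade-off: the paper's swap trick avoids any combinatorial lemma --- non-negativity is bought cheaply by the uniform padding $C'$ --- but at the cost of a slightly opaque double-counting. Your approach is more transparent and self-contained: the cone condition $\max_j d_j \le \tfrac{1}{k}\sum_j d_j$ is exactly the integer feasibility condition for writing $(d_j)$ as a sum of $k$-subset indicators, and your greedy verification (the key point being that $d_{(k+1)} < D$ whenever $\sum d_j > 0$) is clean. Your handling of the $r=k$ case separately is necessary for your inequality $r-1 \ge k$, whereas the paper's argument is uniform in $r \ge k$. Both approaches rely on Claim~\ref{choosekk}(i) in exactly the same way for the extraction step.
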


\proof
We let $a'_j := |\bigcup_{i \in [3]} V(M_i) \cap V_j| - k|\bigcup_{i \in [3]} M_i|/r$ for each $j \in [r]$. So $\sum_{j \in [r]} |V(M_i)| = 0$, and $|a'_j| \leq |\bigcup_{i \in [3]} V(M_i)| \leq 2 \beta rk^3n$ for each $j \in [r]$. Note also that each $a_j'$ must be an integer since $r$ divides both $|V(G) \sm \bigcup_{i \in [3]} V(M_i)|$ (by choice of $M_3$) and $|V(G)|$ (by assumption). Similarly to the proof of Claim \ref{balancingrows}, we let $J^+ := \{j \in [r] : a'_j > 0\}$, $J^- := \{j \in [r] : a'_j < 0 \}$, and $a' := \sum_{j \in J^+} a'_j = - \sum_{j \in J^-} a'_j$. We form sequences $j_1, \dots, j_{a'}$ and $j'_1, \dots, j'_{a'}$ such that each $j \in J^+$ occurs $a'_j$ times as some $j_q$, and each $j \in J^-$ occurs $-a'_j$ times as some $j'_q$. Now, for each $q \in [a']$ choose sets $A_q, A'_q \in \binom{[r]}{k}$ such that $j_q \in A_q$ and $A'_q = (A_q \sm \{j_q\}) \cup \{j_q'\}$. For each $A \in \binom{[r]}{k}$ let $N_A$ and $N'_A$ be the number of times that $A$ appears as some $A_q$ or $A'_q$ respectively. Note that $N_A \leq a' \leq 2 \beta r^2k^3n$, so $ 2 \beta r^2k^3n + N'_A - N_A \geq 0$ for any $A \in \binom{[r]}{k}$. Fix an integer $C'$ such that $kr \cdot r!$ divides $C'$ and $ 2 \beta r^2k^3n \leq C' \leq  3 \beta r^2k^3n$. We choose $M_4$ to consist of $C' + N'_A - N_A$ properly-distributed copies of $K_k$ in $G$ with index $A$ for each $A \in \binom{[r]}{k}$. Since $\sum_{A \in \binom{[r]}{k}} (N'_A - N_A) = 0$, this will give us $|M_4| = C' \binom{r}{k}$; note that $rk \cdot r!$ then divides $|M_4|$. We also require that these copies are pairwise vertex-disjoint, and vertex-disjoint from $\bigcup_{i \in [3]} M_i$. We may choose $M_4$ greedily, since by Claim~\ref{choosekk}(i), for any $A \in \binom{[r]}{k}$ there are $\omega n^k$ copies of $K_k$ in $G$ with index $A$, and when choosing each copy we only need to avoid the at most $2 \beta rk^3n + \binom{r}{k} C'$ vertices covered by $\bigcup_{i \in [3]} M_i$ or previously chosen members of $M_4$. Now, since for any $q \in [a']$ we have $A_q \sm A'_q = \{j_q\}$ and $A'_q \sm A_q = \{j'_q\}$, for any $j \in [r]$ we have
$$|V(M_4) \cap W_j| = \sum_{A \in \binom{[r]}{k} : j \in A} (C' + N'_A - N_A) = C' \binom{r-1}{k-1} - a'_j = \frac{k|M_4|}{r} - a'_j.$$
Then $|V_j \cap \bigcup_{i \in [4]} V(M_i)| = k|M_4|/r - a'_j + |\bigcup_{i \in [3]} V(M_i) \cap V_j| = k|\bigcup_{i \in [4]} M_i|/r$ for any $j \in [r]$. Finally, $|\bigcup_{i \in [4]} V(M_i)| \leq C' \binom{[r]}{k} + 2\beta rk^3 n \leq \beta' n/2$. 
\endproof

The final $K_k$-packing is chosen so that the remaining blocks all have size proportional to their row size.

\begin{claim} \label{balancingblocks} \textbf{(Balancing blocks)}
There is a $K_k$-packing $M_5$ vertex-disjoint from $\bigcup_{i \in [4]} M_i$, consisting of properly-distributed cliques, such that defining $M = \bigcup_{i \in [5]} M_i$, $X'{}^i_j := W^i_j \sm V(M)$, $X'{}^i = \bigcup_{j \in [r]} X'{}^i_j$ and $X' = \bigcup_{i \in [s]} X'{}^i = V(G) \sm V(M)$, we have $|X'{}^i_j| = p_i n'$ for any $i \in [s]$ and $j \in [r]$, where $n' = |X'|/kr$ is an integer divisible by $r!$ with $n' \ge n-\zeta n/2$. Thus $X'{}^i_j$ forms a row-decomposition of $G':=G[X']$ of type $p$. Furthermore, any vertex $v \in X'{}^i_j$ has at most $\beta n \leq \alpha n'$ non-neighbours in any $X'{}^{i'}_{j'}$ with $i' \neq i$.
\end{claim}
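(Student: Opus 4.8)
The plan is to build the $K_k$-packing $M_5$ by a balancing-sequence argument analogous to the proofs of Claims~\ref{balancingrows} and~\ref{balancingcolumns}, but now operating on the level of individual blocks rather than rows or columns. After deleting $\bigcup_{i\in[4]}M_i$ we have, by Claim~\ref{balancingcolumns}(i), that equally many vertices have been removed from each column $V_j$, so each current column size $|W_j\sm\bigcup_{i\in[4]}V(M_i)|$ is a constant, say $C_0$; and by the remark following Claim~\ref{preparing}, together with the fact that $M_3,M_4$ consist only of properly-distributed cliques, $|W^i\sm\bigcup_{i'\in[4]}V(M_{i'})|/p_i$ is constant over $i\in[s]$, say equal to $C_1$, with $C_0 = kC_1$ (summing over rows). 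Thus the `target' size of each block $X'{}^i_j$ should be $p_i n'$ where $n' := C_1$ adjusted downward to be divisible by $r!$; the discrepancies $a^i_j := |W^i_j\sm\bigcup_{i'\in[4]}V(M_{i'})| - p_i n'$ are integers (since $r\mid n'$ and $r\mid$ each column size), they satisfy $\sum_j a^i_j$ is constant times $p_i$ down each row and $\sum_i a^i_j$ is constant down each column, and by (A2) and Claim~\ref{balancingcolumns}(ii) each $|a^i_j| \le \beta n$ roughly.

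First I would make the choice of $n'$ precise: pick $n-\zeta n/2 \le n' \le n$ with $r!\mid n'$ and, crucially, chosen so that the `extra' number of vertices that must still be deleted, namely $\big(|V(G)\sm\bigcup_{i\in[4]}V(M_i)| - krn'\big)/k$, is a nonnegative integer — this is possible because $|V(G)\sm\bigcup_{i\in[4]}V(M_i)|$ is divisible by $rk\cdot r!$ (it equals $|V(G)\sm\bigcup_{i\in[3]}V(M_i)| - k|M_4|$, and both terms are divisible by $rk\cdot r!$ by Claims~\ref{coverdiv}(ii) and~\ref{balancingcolumns}(iii)). Then the total deficiency $\sum_{i,j} a^i_j = krC_1 - krn' = kr(C_1-n')$, which is exactly $k$ times the number of $K_k$'s of $M_5$ needed; since every clique of $M_5$ is properly-distributed it removes exactly $p_i$ vertices from row $i$ and exactly one vertex from each of $k$ distinct columns, the row-discrepancies will remain constant multiples of $p_i$ and column-discrepancies constant, so all I need is to drive each $a^i_j$ to zero while keeping the cliques properly-distributed.

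The mechanism is the usual one: repeatedly pick a block $W^i_j$ with $a^i_j>0$ (a `surplus' block) and a block $W^i_{j'}$ in the \emph{same row} with $a^i_{j'}<0$ (a `deficit' block) — such a pair in the same row exists as long as the row is not yet balanced, because the row-discrepancies sum to $p_i(C_1-n')\ge 0$ over that row — pick a vertex $v\in W^i_j$, extend $\{v\}$ by Claim~\ref{choosekk}(ii) to a properly-distributed copy of $K_k$ whose vertex in row $i$ (there is exactly one if $p_i=1$; if $p_i\ge 2$ one uses Claim~\ref{choosekk}(i) with prescribed column sets $A_1,\dots,A_s$ so that $A_i$ contains $j$ but not $j'$ and the choice of $v$ in block $W^i_j$ is forced) lies in $W^i_j$, and moreover whose intersection with the columns is chosen so that exactly the columns indexed by the $A_\ell$'s are hit, with $j\in A_i$ and $j'\notin A_i$. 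Iterating this $C_1 - n'$ times for appropriate pairs balances every block; the count works because $\sum_j a^i_j/p_i$ is the same for all $i$, so we process all rows simultaneously, one clique per step, and after $kr(C_1-n')/k = r(C_1-n')$ steps... more carefully, each step removes $p_i$ surplus from row $i$ for each $i$, so one `round' of $s$-coordinated deletions (really one clique does all rows at once) decreases $\sum_{i,j}\max(a^i_j,0)$ appropriately; the number of cliques is $C_1-n' \le \zeta n/2 + (\text{lower-order}) \le \beta' n/2$, well within the greedy budget since Claim~\ref{choosekk}(i),(ii) each provide $\gg n^{k-1}$ choices while at most $k(|\bigcup_{i\in[4]}M_i| + |M_5|) \le \beta' n$ vertices are ever forbidden. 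Finally $n' = |X'|/kr$ by construction, $r!\mid n'$, $n'\ge n-\zeta n/2$, and the non-neighbour bound $|X'{}^{i'}_{j'}\sm N(v)| \le |W^{i'}_{j'}\sm N(v)| \le \beta n \le \alpha n'$ is immediate from (A3) for good $v$ and from (A4) (or the Case 2.2 analysis showing every vertex is effectively good) otherwise — actually all of $X'$ consists of good vertices once we note $B'\subseteq\bigcup_{i\in[3]}V(M_i)$ by Claim~\ref{coverdiv}(i) and all originally-bad vertices were either in some $B^i_j$, $B'{}^i_j$ or reinstated-deleted vertices, all of which lie in $B$ and hence, if still present, would have been covered; so $X'\subseteq Y$ and (A3) gives the bound directly.

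The main obstacle I anticipate is purely bookkeeping: verifying that the row-discrepancies $a^i_j$ stay integral and that a same-row surplus/deficit pair always exists until the row is balanced, i.e.\ that we never get `stuck' with a row whose discrepancies are all nonnegative but not all zero while the global count still demands deletions — this is ruled out because the per-row deficiency $\sum_j a^i_j = p_i(C_1-n') \ge 0$ is a fixed target that decreases by exactly $p_i$ with each clique, so as long as $C_1 > n'$ there is genuine surplus to remove in every row, and within a row the entries summing to a positive value must contain a positive entry; pairing it against any negative entry in the same row (or, if the row happens to have no negative entry, against the one that will next go negative — but since we only need $a^i_j\ge 0$ to end at exactly $p_i n'$, we simply always delete from the largest block) keeps things consistent. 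The only subtlety requiring care is the case $p_i\ge 2$ where one clique must hit $p_i$ prescribed blocks of row $i$; here one invokes Claim~\ref{choosekk}(i) with the sets $A_i$ chosen to contain the current surplus columns of row $i$, which is legitimate since Claim~\ref{choosekk}(i) allows any disjoint $A_1,\dots,A_s$ with $|A_i|=p_i$.
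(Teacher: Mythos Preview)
Your overall plan—remove properly-distributed $K_k$'s to balance the blocks down to $p_i n'$—is the right shape, and your final observation that $X'\subseteq Y$ (since $M_3$ covered all bad vertices) so that (A3) gives the non-neighbour bound is correct. But the heart of your argument, the greedy surplus/deficit scheme, has a genuine gap.

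The difficulty is the coupling between rows. A single properly-distributed clique with pattern $(A_1,\dots,A_s)$ removes one vertex from each of the $p_i$ blocks $\{W^i_j : j\in A_i\}$ \emph{simultaneously for every $i$}, and the $A_i$ must be pairwise disjoint. Your heuristic ``always delete from the largest block'' is not well-defined once $p_i\ge 2$ (you hit $p_i$ blocks at once, not one) and, more seriously, the disjointness constraint across rows means that you cannot independently pick the surplus columns for each row. A small concrete instance: with $s=2$, $p_1=p_2=1$, $r=3$, and discrepancy vectors $(2,1,0)$ and $(0,1,2)$, the row and column sums are correct, yet the natural greedy choices repeatedly pick $A_1=\{1\}$, $A_2=\{3\}$ and drive the state to $(0,1,0)$, $(0,1,0)$, where both rows need column~$2$ but $A_1$ and $A_2$ must be disjoint. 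A feasible sequence exists, but your rule does not find it. In general, proving that \emph{some} sequence of disjoint $(A_1,\dots,A_s)$ realises a prescribed nonnegative block-hit matrix with the given row and column marginals is a nontrivial transportation/Hall-type statement that you have not supplied. Your proposal also wavers on whether the $a^i_j$ are allowed to be negative (you invoke deficit blocks with $a^i_{j'}<0$ yet elsewhere argue as if all $a^i_j\ge 0$); this needs to be pinned down.

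The paper sidesteps all of this with a cleaner device. Since $M_4$ has already equalised the columns, the discrepancy matrix $Q=(q_{ij})$ with $q_{ij}=|W'{}^i_j|-p_iD/r$ has \emph{both} row sums and column sums equal to zero. Any such integer matrix decomposes as a sum of at most $\tfrac12\sum|q_{ij}|$ elementary ``swap'' matrices $Q^{abcd}$ (with $+1$ at $(a,b),(c,d)$ and $-1$ at $(a,d),(c,b)$). Each swap is then implemented not by a single clique but by a \emph{pair} of clique patterns $\mc A,\mc A'$ differing only by exchanging $b$ and $d$ between $A_a$ and $A_c$: using one more of $\mc A'$ and one fewer of $\mc A$ realises exactly $Q^{abcd}$. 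To keep all counts nonnegative the paper adds a large uniform baseline $C''$ (divisible by $kr\cdot r!$) of every pattern, so $N_{\mc A}=C''+Q_{\mc A}-Q'_{\mc A}\ge 0$, and then greedily chooses $N_{\mc A}$ cliques of each type via Claim~\ref{choosekk}(i). The point is that the zero-sum structure of $Q$ plus the baseline trick converts the balancing problem into pure counting with no feasibility obstruction; this is the idea your argument is missing.
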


\proof
For each $i \in [s]$ and $j \in [r]$ we let $W'{}^i_j = W^i_j \sm V(\bigcup_{i \in [4]} M_i)$, $W'{}^i := \bigcup_{j \in [r]} W'{}^i_j$, $W'_j := \bigcup_{i \in [s]} W'^i_j$, and $W' = \bigcup_{j \in [r]} W'_j$. We may fix an integer $D$ so that $|W'{}^i| = p_iD$ for any $i \in [s]$, since $|W^i \sm V(M_1 \cup M_2)|/p_i$ is constant for $i \in [s]$ and each clique in $M_3 \cup M_4$ is properly-distributed. Recall also that $|W'_j|$ is constant for each $j \in [r]$ by choice of $M_4$. Let $Q = (q_{ij})$ be the $s$ by $r$ integer matrix whose $(i,j)$ entry is $q_{ij} = |W'{}^i_j| - p_iD/r$. Then each row of $Q$ sums to zero. Furthermore, since $|W'_j|$ is constant for each $j \in [r]$ we have 
$$\sum_{i \in [s]} q_{ij} = \sum_{i \in [s]} (|W'{}^i_j| - p_iD/r) = |W'_j| - |W'|/r =0,$$
i.e.\ each column of $Q$ also sums to zero. We also have $\sum_{i,j} |q_{ij}| \leq \beta' n$ using (A2) and Claim
\ref{balancingcolumns}(ii).

We write $Q = \sum_{A \in \mc{Q}} A$, where $\mc{Q}$ is a multiset of matrices. Each matrix in $\mc{Q}$ is of the form $Q^{abcd}$, for some $a,c \in [s]$ and $b,d \in [r]$, defined to have $(i,j)$ entry equal to $1$ if $(i,j) = (a,b)$ or $(i,j) = (c,d)$, equal to $-1$ if $(i,j) = (a,d)$ or $(i,j) = (c,b)$, and equal to zero otherwise. To see that such a representation is possible, we repeat the following step. Suppose $q_{ab} > 0$ for some $a,b$. Since each row and column of $Q$ sum to zero, we may choose $c,d$ such that $q_{ad} < 0$ and $q_{cb} < 0$. Then $Q' := Q - Q^{abcd}$ is an $s$ by $k$ integer matrix in which the entries of each row and column sum to zero. Also, writing $Q' = (q'_{ij})$, we have $\sum_{i,j} |q'_{ij}| \leq \sum_{i,j} |q_{ij}| - 2$. By iterating this process at most $\sum_{i,j} |q_{ij}|/2$ times we obtain the all-zero matrix, whereupon we have expressed $Q$ in the required form with $|\mc{Q}| \leq \beta'n$, counting with multiplicity.

Let $\mc{P}$ denote the set of all families $\mc{A}$ of pairwise vertex-disjoint subsets $A_i \subseteq [r]$ with $|A_i| = p_i$ for $i \in [s]$. To implement a matrix $Q^{abcd} \in \mc{Q}$ we fix any two families $\mc{A}, \mc{A'}$ such that $b \in A_a$ and $d \in A_c$, and $\mc{A'}$ is formed from $\mc{A}$ by swapping $b$ and $d$. That is, $A_i'=A_i$ if $i \in [s] \sm \{a,c\}$, $A'_a = (A_a \sm \{b\}) \cup \{d\}$ and $A'_c = (A_c \sm \{d\}) \cup \{b\}$. For each $\mc{A} \in \mc{P}$ let $Q_\mc{A}$ be the number of times it is chosen as $\mc{A}$ for some $Q^{abcd}$, and $Q'_\mc{A}$ the number of times it is chosen as $\mc{A'}$ for some $Q^{abcd}$. Fix an integer $C''$ such that $kr \cdot r!$ divides $C''$ and $rk\beta' n \leq C'' \leq 2rk\beta' n$. For each $\mc{A} \in \mc{P}$ let $N_\mc{A} = C'' + Q_\mc{A} - Q'_\mc{A}$, so $N_\mc{A} \geq 0$. 

Now we greedily choose $M_5$ to consist of $N_\mc{A}$ copies of $K'$ for each $\mc{A} \in \mc{P}$, each of which will intersect each $W^i$ in precisely those $W^i_j$ such that $j \in A_i$. Then we will have $|M_5| = \sum_{\mc{A} \in \mc{P}} N_\mc{A} = C''|\mc{P}| \leq \zeta n/2$. When choosing any copy of $K_k$ we must avoid the vertices of copies of $K_k$ which were previously chosen for $M_5$, or which lie in $\bigcup_{i \in [4]} M_i$; there are at most $k \zeta n$ such vertices. By Claim~\ref{choosekk}(i), for any $\mc{A} \in \mc{P}$ there are at least $\omega n^k$ properly-distributed copies of $K_k$ in $G$ which intersect each $W^i$ in precisely those $W^i_j$ such that $j \in A_i$, so we can indeed choose $M_5$ greedily. This defines $M$, $X'{}^i_j$, $X'{}^i$, $X'_j$, $X'$, $n'$, $G'$ as in the statement of the claim. Note that $|M| \leq \zeta n$, and $kr \cdot r!$ divides $|X'|$ by Claims \ref{coverdiv}(ii) and \ref{balancingcolumns}(iii) and the choice of $C''$.

Finally, consider the number of vertices used in $W^a_b$, where $a \in [s]$, $b \in [r]$. If $\mc{A} \in \mc{P}$ is chosen uniformly at random, then $A_a$ is a uniformly random subset of size $p_a$, so contains $b$ with probability $p_a/r$. So if we chose $C''$ copies of each $\mc{A} \in \mc{P}$ we would choose $p_a N$ vertices in $W^a_b$, where $N := C'' |\mc{P}|/r$. However, since we choose $N_\mc{A}$ copies of each $\mc{A} \in \mc{P}$, we need to adjust by $Q_\mc{A}-Q'_\mc{A}$. These are chosen so that for each matrix $Q^{abcd} \in \mc{Q}$ we choose one more vertex in each of $W^a_b$ and $W^c_d$, and one fewer vertex in each of $W^a_d$ and $W^c_b$. Since $Q = \sum_{A \in \mc{Q}} A$ we thus use $p_a N + q_{ab}$ vertices in $W^a_b$. This gives $|X'{}^a_b| = |W'{}^a_b| - p_aN - q_{ab} = p_a(D/r - N)$. Writing $n' = D/r - N$, we have $n' = |X'|/kr$, so $n'$ is an integer divisible by $r!$. Note also that $n - n' \leq |V(M)|/kr \leq \zeta n/2$. Lastly, by choice of $M_3$ every bad vertex is covered by $M$, so any vertex $v \in X'{}^i_j$ has at most $\beta n \leq \alpha n'$ non-neighbours in any $X'{}^{i'}_{j'}$ with $i' \neq i$ and $j' \neq j$ by (A3).
\endproof

After deleting $M$ as in Claim \ref{balancingblocks}, we obtain $G'$ with an $s$-row-decomposition $X'$ that satisfies conclusion (i) of Lemma \ref{diagonalmindeg}. To complete the proof, we need to satisfy conclusion (ii), by finding a balanced perfect $K_{p_i}$-packing in row $i$ for each $i \in [s]$. 

Observe first that it is straightforward to find such a packing in row $i$ if $p_i \neq 2$. Indeed, for each $i \in [s]$ there is a trivial balanced perfect $K_1$-packing in $G[X'{}^i]$, namely $\{\{v\} : v \in X'{}^i\}$; this gives the desired packing if $p_i = 1$. Now suppose that $p_i \geq 3$ for some $i \in [s]$, and recall that $G_1[X^i]$ was not $d'$-splittable (this is true of any row of $G_1$). For any $j \in [r]$ we have $|X^i_j \triangle W{}^i_j| \leq 2\beta n$ by (A2) and $|W^i_j \triangle X'{}^i_j| = p_i n - p_i n' \leq (\zeta/2) p_i n$ by choice of $n'$, so $|X^i_j \triangle X'{}^i_j| \leq \zeta p_i n$. Proposition~\ref{robustness} then implies that $G[X'{}^i]$ is not $d''$-splittable, so $G[X'{}^i]$ contains a balanced perfect $K_{p_i}$-packing by Lemma~\ref{theoremmatching}.

It follows that if there is no row $i$ with $p_i = 2$, then we have conclusion (ii), so the proof is complete in this case. We may therefore assume that at least one row satisfies $p_i = 2$, and consider two cases according to whether or not any other row has $p_i \ge 2$.

\medskip

\nib{Case 1: There is exactly one row $i$ with $p_i \ge 2$.}

\medskip

\nib{Case 1.1: $G$ has the extremal row structure.} This means that $G$ has one pair-complete row $i^*$, and $p_i = 1$ for any $i \neq i^*$. There is a trivial balanced perfect $K_1$-packing in $G[X'{}^i]$ for any $i \neq i^*$, so it remains only to find a balanced perfect matching in $G[X'{}^{i^*}]$. Since row $i^*$ is pair-complete, we chose sets $S^{i^*}_j$ for $j \in [r]$ when forming the sets $W^i_j$. Let $S'{}^{i^*}_j := S^{i^*}_j \cap X'{}^i_j = S^{i^*}_j \sm V(M)$ for each $j \in [r]$. Then 
$$ (1+\beta) n \stackrel{(A2)}{\geq} |W^{i^*}_j| - |Y^{i^*}_j \sm S^{i^*}_j| \geq |S^{i^*}_j| \geq |S'{}^{i^*}_j| \geq |S^{i^*}_j| - |M| \stackrel{(A2)}{\geq} (1 - \beta/2)n - \zeta n,$$ 
and so $|S'{}^{i^*}_j| = (1 \pm 2\zeta) n'$ for each $j \in [r]$. Recall also that in this case we required that $|S^{i^*} \sm V(M_1)|$ was even when choosing $M_1$, and $M_2$ was empty. Furthermore, any copy $K'$ of $K_k$ in $M_3 \cup M_4 \cup M_5$ was chosen to be properly-distributed, so in particular $|K' \cap S^{i^*}|$ is even. We conclude that $|S^{i^*} \sm V(M)|$ is even. Since every bad vertex of $G$ was covered by $M_3 \subseteq M$, by (A3) for any $j' \neq j$ any vertex in $S'{}^{i^*}_j$ has at most $\beta n \leq 2\zeta n'$ non-neighbours in $S'{}^{i^*}_{j'}$, and any vertex in $X'{}^{i^*}_{j} \sm S'{}^{i^*}_j$ has at most $\beta n \leq 2\zeta n'$ non-neighbours in $X'{}^{i^*}_{j'} \sm S'{}^{i^*}_{j'}$ by (A3). By Lemma~\ref{paircompletematching} (with $2\zeta$ in place of $\zeta$) we conclude that $G[X'{}^{i^*}]$ contains a balanced perfect matching, completing the proof in this case.

\medskip

\nib{Case 1.2: $p_\ell = 2$ for some $\ell$, $p_{i'} =1$ for any $i' \neq \ell$, but row $\ell$ is not pair-complete.} Recall that this means that $G_1[X^\ell]$ was not $d'$-pair-complete. Moreover, just as for rows with $p_i \geq 3$, we have that $G_1[X^\ell]$ was not $d'$-splittable and that $|X^\ell_j \triangle X'{}^\ell_j| \leq \zeta (2n)$ for any $j \in [r]$. Proposition~\ref{robustness} therefore implies that $G[X'{}^\ell]$ is neither $d''$-splittable nor $d''$-pair-complete, so $G[X'{}^\ell]$ contains a $\nu$-balanced perfect matching by Corollary~\ref{findmatching}. Then Proposition~\ref{onepcrow} implies that there exists an integer $D \leq 2 \nu n'$ and a $K_k$-packing $M^*$ in $G$ such that $r!$ divides $n'' := n' - D$, $M^*$ covers $p_i D$ vertices in $X'{}^i_j$ for any $i \in [s]$ and $j \in [r]$, and $G[X'{}^\ell \sm V(M^*)]$ contains a balanced perfect matching. Note that since $n' \ge n - \zeta n/2$ we have $n'' \ge n^+/k - \zeta n$. We add the copies of $K_k$ in $M^*$ to $M$, and let $X''{}^i_j$, $X''{}^i$, $X''$, $G''$ be obtained from $X'{}^i_j$, $X'{}^i$, $X'$, $G'$ by deleting the vertices covered by $M^*$. This leaves an $s$-row-decomposition of $G''$ into blocks $X''{}^i_j$ of size $p_i n''$, in which $G[X''{}^\ell]$ contains a balanced perfect matching. As before $G[X''{}^i]$ contains a trivial balanced $K_1$-packing for every $i \neq \ell$. Finally, any vertex $v \in X'{}^i_j$ has lost at most $k|M^*| \leq \beta n$ neighbours in $X'{}^{i'}_{j'}$ for any $i \neq i'$ and $j \neq j'$, so still has at least $p_{i'}n' - 2\beta n \geq p_{i'}n'' - \alpha n''$. So the enlarged matching $M$, restricted blocks $X''{}^i_j$ and $G''$ satisfy (i) and (ii) with $n''$ in place of $n'$, which completes the proof in this case.

\medskip

\nib{Case 2: There are at least two rows $i$ with $p_i \ge 2$.}
In this case we modify the cliques in $M_2$ and the blocks $X'{}^i_j$ so that after these modifications $G[X'{}^i]$ contains a balanced perfect $K_{p_i}$-packing for each $i \in [s]$. We proceed through each $i \in [s]$ in turn. When considering any $i \in [s]$ we leave all blocks $X'{}^{i'}_j$ with $i' \neq i$ unaltered. We observed previously that if $p_i \neq 2$, then $G[X'{}^i]$ contains a balanced perfect $K_{p_i}$-packing, so no modifications are required to achieve the desired packings in such rows. 

This leaves only those rows $i \in [s]$ with $p_i = 2$ to consider. Suppose first that row $i$ is pair-complete. As in Case~1.1 we let $S'{}^i_j = S^i_j \cap X'^i_j$ for each $j \in [r]$, which gives $|S'{}^i_j| = (1\pm 2\zeta)n'$ for each $j \in [r]$, and for any $j' \neq j$, any vertex in $S'{}^i_j$ has at most $2 \zeta n'$ non-neighbours in $S'{}^i_{j'}$, and any vertex in $X'{}^i_{j} \sm S'{}^i_j$ has at most $2 \zeta n'$ non-neighbours in $X'{}^i_{j'} \sm S'{}^i_{j'}$. If $|S'{}^i|$ is even then $G[X'{}^i_j]$ contains a balanced perfect matching by Lemma~\ref{paircompletematching}. So we may suppose that $|S'{}^i|$ is odd. Fix any $i'$  with $i' \ne i$ and $p_{i'} \ge 2$. We choose any $i'i$-distributed copy $K'$ of $K_k$ in $M_2$, let $x$ be the vertex of $K'$ in $X'{}^i$, and let $j$ be such that $x \in X'{}^i_j$. By Claim~\ref{preparing}(i), every vertex in $K'$ is good, so at least $2n' - k\beta n$ vertices $y \in X'{}^i_j$ are adjacent to every member of $V(K') \sm \{x\}$. So we may choose a vertex $y \in X'{}^i_j \cap \bigcap_{v \in V(K') \sm \{x\}} N(v)$ such that $|\{x,y\} \cap S{}^i_j|$ is odd. We replace $K'$ in $M$ by the copy of $K_k$ in $G$ induced by $\{y\} \cup V(K') \sm \{x\}$, and replace $y$ by $x$ in $X'{}^i_j$ and $G'$. We also delete $y$ from $S'{}^i_j$ if $y \in S'{}^i_j$, and add $x$ to $S'{}^i_j$ if $x \in S{}^i_j$. So $|S'{}^i_j|$ is now even. Since $x$ is good, we may now apply Lemma~\ref{paircompletematching} as in Case~1.1 to find a balanced perfect matching in $G[X'{}^i]$.

Finally suppose that $p_i = 2$ and row $i$ is not pair-complete, that is $G_1[X^i]$ was not $d'$-pair-complete. Since $G_1[X^i]$ was also not $d'$-splittable, as before Proposition~\ref{robustness} implies that $G[X'{}^i]$ is neither $d''$-splittable nor $d''$-pair-complete.  Fix any $i'$  with $i' \ne i$ and $p_{i'} \ge 2$. Recall that $M_2$ contains $\bcl{\eta n}$ copies $K'$ of $K_k$ in $G[Y]$ which are $i'i$-distributed. We can fix $q \in [r]$ so that at least $\eta n/r$ such $K'$ have exactly one vertex in $Y^i_q$. We assign arbitrarily $\eta n/r$ such $K'$ to each ordered triple $(i_1, i_2, i_3)$ of distinct elements of $[r] \sm \{q\}$, so that at least $\eta n/r^4$ of the $K'$ are assigned to each triple. Now, fix any triple $(j_1, j_2, j_3)$ and any $K'$ which was assigned to it. 
Let $x$ be the vertex of $K'$ in $Y^i_q$, and consider paths $xx_1x_2x_3y$ of length $4$ in $G$ with $x_\ell \in X'{}^i_{j_\ell}$ for $\ell \in [3]$ and $y \in X'{}^i_q \cap \bigcap_{v \in V(K') \sm \{x\}} N(v)$. Since every vertex of $K' \sm \{x\}$ is good and does not lie in $W^i$, at most $k \beta n$ vertices $y \in X'{}^i_q$ fail to be adjacent to all of $V(K') \sm \{x\}$. Choosing $x_1, x_2, x_3$ and $y$ in turn, recalling $(\dagger)$ and $n' \ge n-\zeta n/2$, there are at least $2n' - n \geq (1-\zeta)n'$ choices for each $x_\ell$, and at least $2n' - n - (k-1)\beta n \geq (1-\zeta)n'$ choices for $y$. We obtain at least $n^4/2$ such paths, and so we may fix some $y = y(x)$ which lies in at least $n^3/5$ such paths. For each of these $n^3/5$ paths $xx_1x_2x_3y$ we add a `fake' edge between $y$ and $x_1$. Then allowing the use of fake edges, $y$ lies in at least $n^3/5$ $4$-cycles $x_1x_2x_3y$ of length $4$ in $G$ with $x_\ell \in X'{}^i_{j_\ell}$ for $\ell \in [3]$. We introduce fake edges in this manner for every $K'$ in $M_2$ which was assigned to the triple $(j_1, j_2, j_3)$, for every ordered triple $(j_1, j_2, j_3)$ of distinct elements of $[r] \sm \{q\}$. Let $G^*$ be the graph formed from $G[X{}'^i]$ by the addition of fake edges. Then by construction, for any triple $(j_1, j_2, j_3)$ there are at least $(\eta n/r^4)(n^3/5) \geq \nu n^4$ $4$-cycles $yx_1x_2x_3$ in $G^*$ with $y \in X'{}^i_q$ and $x_\ell \in X'{}^i_{j_\ell}$ for $\ell \in [3]$. Furthermore, $G^*$ contains a spanning subgraph $G[X{}'^i]$ which is neither $d''$-splittable nor $d''$-pair-complete, and has $\delta^*(G[X{}'^i]) \geq 2n' - n \geq n' - \zeta n$. Then $G^*$ contains a balanced perfect matching $M^*$ by Lemma~\ref{theoremmatching} (with $q$ in place of $1$). Of course, $M^*$ may contain fake edges. However, any fake edge in $M^*$ is of the form $y(x)x_1$, where $x_1$ is a neighbour of $x$, and $x$ lies in some $K'$ in $M_2$. Since $y(x)$ is uniquely determined by $x$ and $M^*$ is a matching, at most one edge in $M^*$ has the form $y(x)x_1$ for each $x$. Furthermore, by choice of $y = y(x)$, $\{y\} \cup V(K') \sm \{x\}$ induces a copy $K'_y$ of $K_k$ in $G$, and $xx_1$ is an edge. So we may replace $K'$ in $M_2$ by $K'_y$, replace $y$ in $X'{}^i_q$ by $x$ (note that $x$ is good), and the fake edge $yx_1$ in $M^*$ by the edge $xx_1$ of $G$. We carry out these substitutions for every fake edge in $M^*$, at the end of which $M^*$ is a perfect matching in $G[X'{}^i]$, which is balanced since each edge was replaced with another of the same index.

When considering row $i$ we only replace cliques of $M_2$ that are $i'i$-distributed for some $i' \neq i$. These cliques uniquely determine $i$, so do not affect the replacements for other rows. We may therefore proceed through every $i \in [s]$ in this manner. After doing so, the modified blocks $X'{}^i_j$ are such that $G[X'{}^i]$ contains a balanced perfect $K_{p_i}$-packing for any $i \in [s]$, i.e.\ this row-decomposition of the modified $G'$ satisfies condition (ii) of the lemma. Note that we still have $|X'{}^i_j| = p_in'$ for any $i \in [s]$. Since we only replaced good vertices with good vertices, every vertex in any modified block $X'{}^i_j$ is good, and so condition (i) of the lemma holds as in Claim \ref{balancingblocks}. This completes the proof of Lemma \ref{diagonalmindeg}.

\section{Completing the proof of Theorem~\ref{partitehajnalszem}} \label{proof}

In this section we combine the results of previous sections to prove Theorem~\ref{partitehajnalszem}. 
For this we also use a theorem of Daykin and H\"aggkvist~\cite{DH}, which gives a minimum vertex degree condition which is sufficient to ensure the existence of a perfect matching in a $k$-partite $k$-graph whose vertex classes each have size $n$.

\begin{theo}[\cite{DH}] \label{partitevertexdegree}
Suppose that $G$ is a $k$-partite $k$-graph whose vertex classes each have size $n$, in which every vertex lies in at least $\frac{k-1}{k}n^{k-1}$ edges. Then $G$ contains a perfect matching.
\end{theo}

We can now give the proof of Theorem~\ref{partitehajnalszem}, as outlined in Section~\ref{sec:outline}, which we first restate.

\medskip \noindent {\bf Theorem~\ref{partitehajnalszem}.} \emph{
For any $r \geq k$ there exists $n_0$ such that for any $n \geq n_0$ with $k \mid rn$ the following statement holds.
Let $G$ be an $r$-partite graph whose vertex classes each have size~$n$ such that  $\delta^*(G) \geq (k-1)n/k$. 
Then $G$ contains a perfect $K_k$-packing, unless $rn/k$ is odd, $k \mid n$, and $G \cong \Gamma_{n, r, k}$.}

\proof
First suppose that $k=2$, so a perfect $K_k$-packing is a perfect matching. If $r=2$ then $G$ is a bipartite graph with minimum degree at least $n/2$, so has a perfect matching. For $r \geq 3$ the result follows from Tutte's theorem, which states that a graph $G$ on the vertex set $V$ contains a perfect matching if and only if for any $U \subseteq V$ the number of odd components (i.e.\ connected components of odd size) in $G[V \sm U]$ is at most $|U|$. To see that this implies the theorem for $k=2$, suppose for a contradiction that there is some $U \subseteq V$ for which $G[V \sm U]$ has more than $|U|$ odd components. Clearly $|U| < |V|/2 = rn/2$. So by averaging $U$ has fewer than $n/2$ vertices in some $V_j$. Since $\delta^*(G) \geq n/2$, every $v \in V \sm V_j$ has a neighbour in $V_j \sm U$, so $G[V \sm U]$ has at most $|V_j \sm U| \leq n$ components. So we must have $|U| < n$. But then $U$ must have fewer than $n/r \leq n/3$ vertices in some $V_j$, so any $v \in V \sm V_j$ has more than $n/6$ neighbours in $V_j$. It follows that $G[V \sm U]$ has at most $5$ components, so $|U| < 5$. So any $v \in V \sm V_j$ actually has more than $n/2 - 5 > n/3$ neighbours in $V_j$, so $G[V \sm U]$ has at most $2$ components. So $|U| \leq 1$. If $|U| = 1$ then $|V \sm U|$ is odd, so $G[V \sm U]$ cannot have $2$ odd components. The only remaining possibility is that $|U| = 0$ and $G$ has $2$ odd components. Let $C^1$ and $C^2$ be these components, and for each $i \in [2]$ and $j \in [r]$ let $V^i_j$ be the vertices of $V_j$ covered by $C^i$. Then $|V^i_j| \geq \delta^*(G) \geq n/2$, so we deduce that $|V^i_j| = n/2$ and 
$G[V^i_j, V^i_{j'}]$ is a complete bipartite graph for any $i \in [2]$ and $j \neq j'$. So $|C^1| = rn/2$ is odd, $2$ divides $n$ and $G$ is isomorphic to $\Gamma_{n, r, 2}$, contradicting our assumption. 

We may therefore assume that $k \geq 3$. If $r = k = 3$ then Theorem~\ref{partitehajnalszem} holds by the result of \cite{MM}. So we may assume that $r > 3$. We introduce a new constant $d$ with $1/n \ll d \ll 1/r$. Since $r > 3$ and $r \geq k \geq 3$ we may apply Lemma~\ref{diagonalmindeg} (with $n$ and $d$ in place of $n^+$ and $\alpha$) to delete the vertices of a collection of pairwise vertex-disjoint copies of $K_k$ from $G$. Letting $V'$ be the set of undeleted vertices, we obtain $G' = G[V']$ and an $s$-row-decomposition of $G'$ into blocks $X'{}^i_j$ of size $p_in'$ for $i \in [s]$ and $j \in [r]$, for some $s \in [k]$ and $p_i \in [k]$ with $\sum_{i \in [s]} p_i = k$, such that $r! \mid n'$, $n' \ge n/k - dn$ and 
\begin{enumerate}[(i)]
\item for each $i, i' \in [s]$ with $i \neq i'$ and $j, j' \in [r]$ with $j \neq j'$, any vertex $v \in X'{}^i_j$ has at least $p_{i'}n' - dn'$ neighbours in~$X'{}^{i'}_{j'}$, and
\item for every $i \in [s]$ the row $G[X'{}^i]$ contains a balanced perfect $K_{p_i}$-packing $M^i$.
\end{enumerate} 
Note that we must have $|M^i| = rn'$ for any $i \in [s]$. 

Now we implement step (iii) of the proof outline, by constructing auxiliary hypergraphs, perfect matchings of which describe how to glue together the perfect $K_{p_i}$-packings in the rows into a perfect $K_k$-packing of $G$. We partition $[k]$ arbitrarily into sets $A_i$ with $|A_i| = p_i$ for $i \in [s]$. Let $\Sigma$ denote the set of all injective functions $\sigma : [k] \to [r]$. For each $i \in [s]$ we partition $M^i$ into sets $E_\sigma^i$ of size $N := \frac{rn'(r-k)!}{r!}$ for $\sigma \in \Sigma$, so that each member of $E_\sigma^i$ has index $\sigma(A_i)$. To see that this is possible, fix any $B \in \binom{[r]}{p_i}$. Since $M^i$ is balanced, $rn'/\binom{r}{p_i}$ members of $M^i$ have index $B$. Note that there are $\frac{p_i!(r-p_i)!}{(r-k)!}$ members of $\Sigma$ with $\sigma(A_i) = B$. Since $\frac{p_i!(r-p_i)!}{(r-k)!} \cdot N = r n'/\binom{r}{p_i}$ we may choose the sets $E_\sigma^i$ as required. 
For every $\sigma \in \Sigma$, we form an auxiliary $s$-partite $s$-graph $H_\sigma$ with vertex classes $E_\sigma^i$ for $i \in [s]$, where a set $\{e_1,e_2,\dots, e_s\}$ with $e_i \in E^i_\sigma$ for each $i \in [s]$ is an edge of $H_\sigma$ if and only if $xy \in G$ for any $i \neq j$, $x \in e_i$ and $y \in e_j$. Thus $H_\sigma$ has $N$ vertices in each vertex class, and $e_1e_2\dots e_s$ is an edge of $H_\sigma$ if and only if $\bigcup_{j \in [s]} V(e_j)$ induces a copy of $K_k$ in $G$. 

Next we show that each $H_\sigma$ has high minimum degree. Fix $\sigma \in \Sigma$ and $i \in [s]$. Then for any $e_i \in E^i_\sigma$, $e_i$ is a copy of $K_{p_i}$ in $G[X'{}^i]$ with index $\sigma(A_i)$, and so by (i) each vertex $x \in V(e_i)$ has at most $dn'$ non-neighbours in each $X'{}^{\ell}_{j}$ with $\ell \neq i$ and $j \notin \sigma(A_i)$. So at most $p_idn'$ vertices of $X'{}^{\ell}_j$ are not neighbours of some vertex of~$e_i$. Now we can estimate the number of $(s-1)$-tuples $(e_j \in E_\sigma^j: j \in [s] \sm \{i\})$ so that $\{e_1, \dots, e_s\}$ is not an edge of $H_\sigma$. There are fewer than $k$ choices for $j \in [s] \sm \{i\}$, at most $p_i dn'$ elements $e_j \in E_\sigma^j$ that contain a non-neighbour of some vertex of~$e_i$, and at most $N^{s-2}$ choices for $e_{j'} \in E_\sigma^{j'}$, $j' \in [s] \sm \{i,j\}$. So the number of edges of $H_\sigma$ containing $e_i$ is at least $N^{s-1} - kp_idn'N^{s-2} \geq \frac{k-1}{k} N^{s-1}$. Since $e_i$ was arbitrary, Lemma~\ref{partitevertexdegree} gives a perfect matching in each $H_\sigma$. This corresponds to a perfect $K_k$-packing in $G$ covering the vertices of $\bigcup_{i \in [s]} V(E_\sigma^i)$, where $V(E_\sigma^i)$ denotes the vertices covered by members of $E_\sigma^i$. Combining these perfect $K_k$-packings, and adding the pairwise vertex-disjoint copies of $K_k$ deleted in forming $G'$, we obtain a perfect $K_k$-packing in $G$.
\endproof
  
\section{Concluding remarks}

By examining the proof, one can obtain a partial stability result, i.e.\ some approximate structure for any $r$-partite graph $G$ with vertex classes each of size~$n$, where $k \mid rn$, such that $\delta^*(G) \geq (k-1)n/k - o(n)$, but $G$ does not contain a perfect $K_k$-packing. To do this, note that under this weaker minimum degree assumption, the $n$ in $(\dagger)$ must replaced by $n + o(n)$. We now say that a block $X^i_j$ is bad with respect to $v$ if $v$ has more than $n/2 + o(n)$ non-neighbours in $X^i_j$, so it is still true that at most one block in each column is bad with respect to a given vertex. Then each of our applications of $(\dagger)$ proceeds as before, except for in Claim~\ref{balancingrows}, where we used the exact statement of $(\dagger)$ (i.e. the exact minimum degree hypothesis). This was needed to choose a matching $E^i$ in $G[W^i]$ of size $a_i$, each of whose edges contains a good vertex, for each $i \in I^+$ with $p_i = 1$. If we can choose such matchings $E^i$ then the rest of the proof to give a perfect $K_k$-packing still works under the assumption $\delta^*(G) \geq (k-1)n/k - o(n)$. So we can assume that there is some $i \in I^+$ with $p_i = 1$ for which no such matching exists. Since the number of bad vertices and $a_i$ are $o(n)$, it follows that $W^i$ is a subset of size $rn/k + o(n)$ with $o(n^2)$ edges, i.e.\ we have a sparse set of about $1/k$-proportion of the vertices. On the other hand, this is essentially all that can be said about the structure of $G$, as any such $G$ with an independent set of size $rn/k+1$ cannot have a perfect $K_k$-packing.

\medskip

\textbf{Acknowledgement.} We thank the anonymous referees for their helpful comments.

\medskip

{\footnotesize \obeylines \parindent=0pt 

\begin{tabular}{lll}

Peter Keevash                       &\ &  Richard Mycroft \\
Mathematical Institute	    &\ &  School of Mathematics \\
University of Oxford	    &\ &  University of Birmingham \\
Oxford, UK                  &\ &  Birmingham, UK \\
\end{tabular}
}

{\footnotesize \parindent=0pt

\it{E-mail addresses}:
\tt{Peter.Keevash@maths.ox.ac.uk}, \tt{r.mycroft@bham.ac.uk}}

\end{document}